\documentclass[onefignum,onetabnum]{siamonline190516}
\usepackage{upgreek}
\usepackage{amsfonts, amsmath, amssymb, mathtools}
\usepackage{graphicx}
\usepackage{epstopdf}
\usepackage{mathabx}
\usepackage{subcaption}
\usepackage{algorithmic}
\usepackage{enumerate}
\usepackage{bm}
\ifpdf
  \DeclareGraphicsExtensions{.eps,.pdf,.png,.jpg}
\else
  \DeclareGraphicsExtensions{.eps}
\fi

\usepackage[normalem]{ulem}

\interfootnotelinepenalty=10000

\usepackage{enumitem}
\setlist[enumerate]{leftmargin=.5in}
\setlist[itemize]{leftmargin=.5in}

\setlength{\tabcolsep}{5pt}



\newsiamthm{claim}{Claim}
\newsiamremark{conjecture}{Conjecture}
\newsiamremark{remark}{Remark}
\newsiamremark{example}{Example}
\newsiamremark{hypothesis}{Hypothesis}
\newsiamremark{problem}{Problem}
\newsiamremark{assumption}{Assumption}
\usepackage{mathrsfs}
\usepackage{etoolbox}
\AtEndEnvironment{remark}{\null\hfill$\Diamond$}

\newcommand{\mcl}{\mathcal}

\newcommand{\mbf}{\mathbf}
\newcommand{\mbb}{\mathbb}

\newcommand{\dist}{{\rm dist}}

\newcommand{\dd}{{\rm d}}

\newcommand{\mK}{\mcl{K}}

\newcommand{\mmT}{\mathcal{T}}
\newcommand{\mT}{\mathscr{T}}

\newcommand{\TT}{T^{\dagger}}

\newcommand{\hT}{\widehat{T}}
\newcommand{\hnu}{\widehat{\nu}}

\newcommand{\W}{{\rm W}}
\newcommand{\KL}{{\rm KL}}
\newcommand{\MMD}{{\rm MMD}}

\newcommand{\PP}{\mbb{P}}
\newcommand{\R}{\mbb{R}}

\definecolor{darkred}{rgb}{.7,0,0}

\definecolor{darkgreen}{rgb}{.15,.55,0}

\definecolor{darkblue}{rgb}{0,0,0.7}

\newcommand\numberthis{\addtocounter{equation}{1}\tag{\theequation}}

\newcommand{\rev}[1]{\textcolor{red}{#1}}


\usepackage{amsopn}

\DeclareMathOperator*{\argmin}{arg\,min}

\DeclareMathOperator*{\essinf}{ess\,inf}


  \AtEndEnvironment{example}{\null\hfill$\Diamond$}
  \AtEndEnvironment{runningexample}{\null\hfill$\Diamond$}


\renewcommand{\rev}[1]{#1}

\reversemarginpar
\setlength{\marginparwidth}{2cm}
\usepackage[colorinlistoftodos,prependcaption,textsize=tiny]{todonotes}

\headers{Error Analysis of Measure Transport}{R.\ Baptista, B.\ Hosseini, N.\ B.\ Kovachki, Y.\ Marzouk, A.\ Sagiv}




\title{An Approximation Theory Framework for Measure-Transport\\ Sampling Algorithms}

\author{Ricardo Baptista\thanks{Computing and Mathematical Sciences, Caltech, Pasadena, CA (\email{rsb@caltech.edu})}
\and  
Bamdad Hosseini\thanks{Applied Mathematics, University of Washington, Seattle, WA (\email{bamdadh@uw.edu})}
\and 
Nikola B.\thinspace Kovachki\thanks{NVIDIA Corporation, Santa Clara, CA (\email{nkovachki@nvidia.com})}
\and  Youssef Marzouk\thanks{Center for Computational Science and Engineering, MIT, Cambridge, MA (\email{ymarz@mit.edu})}
\and Amir Sagiv\thanks{Technion - Israel Institute of Technology, Haifa, Israel (\email{amirsagiv@technion.ac.il})}
}

\ifpdf
\hypersetup{
  pdftitle={ErrorAnalysisOfMeasureTransport},
  pdfauthor={R. Baptista, B. Hosseini, N. B. Kovachki, Y. Marzouk, Amir Sagiv}
}
\fi




\begin{document}

\maketitle

\begin{abstract}
This article presents a general approximation-theoretic framework to analyze measure transport algorithms for probabilistic modeling. A primary motivating application for such algorithms is sampling---a central task in statistical inference and generative modeling.
We provide a priori error estimates in the continuum limit, i.e., when the measures (or their densities) are given, but when the transport map is discretized or approximated using a finite-dimensional function space.
Our analysis relies on the regularity theory of transport maps and on classical approximation theory for high-dimensional functions. A third element of our analysis, which is of independent interest, is the development of new stability estimates
that relate the distance between two maps to the  distance~(or divergence) between the pushforward measures they define. We present a series of applications of our framework, where quantitative convergence rates are obtained for practical problems using Wasserstein metrics, maximum mean discrepancy, and Kullback--Leibler divergence. Specialized rates for approximations of the popular triangular Kn{\"o}the-Rosenblatt maps are obtained, followed by numerical experiments that demonstrate and extend our theory. 
\end{abstract}

\begin{keywords} Transport map, generative models, stability analysis, approximation theory. 
\end{keywords}


\section{Introduction}\label{sec:introduction}
This article presents a general framework for analyzing the approximation error of measure-transport approaches to probabilistic modeling. The approximation of high-dimensional probability measures is a fundamental problem in statistics, data science, and uncertainty quantification. Broadly speaking, probability measures can be characterized via sampling (generative modeling) or direct density estimation.
The sampling problem is, simply put, to generate independent and identically distributed (iid) draws from a target probability measure $\nu$---or, in practice, draws that are as close to iid as possible. 
Density estimation, on other hand, is the task of learning a tractable form for the density of $\nu$, given only a finite collection of samples.


Transport-based methods have recently emerged as a powerful approach to sampling and density estimation. They have attracted considerable attention in part due to the empirical success of their applications in machine learning, such as generative adversarial networks (GANs) \cite{goodfellow2014generative, gui2021review, creswell2018generative} and normalizing flows (NFs) \cite{marzouk-opt-map, rezende2015variational, kobyzev2020normalizing, papamakarios2021normalizing, tabak2013family, tabak2010density}. 
The transport approach can be summarized as follows:  Suppose we are given a {\em reference probability measure} $\eta$ from which sampling is easy, e.g., a uniform or standard Gaussian measure. Suppose further that we have a map $\TT$ which {\em pushes forward} the reference to the target $\TT_{\sharp}\eta = \nu$, i.e., $\nu(A)=\eta( (\TT)^{-1}(A))$ for every measurable set $A$. Then, samples $x_1, \ldots, x_n \stackrel{\rm iid}{\sim} \eta$ from the reference are transformed into samples $\TT(x_1),\ldots, \TT(x_n) \stackrel{\rm iid}{\sim} \nu$ from the target at negligible computational cost. 
Moreover, if $\TT$ is invertible and differentiable, the density of the target $\nu$ can be explicitly obtained via the change-of-variables formula \cite[Sec.~3.7]{bogachev1}. The challenge, then, is {\em to find a map $\hT$ that (exactly or approximately)} pushes forward  $\eta$ to  $\nu$.

While transport-based methods are empirically successful and popular in practice, our theoretical understanding of them is lacking (see Section~\ref{subsec:relevant-literature}). In particular, there is very little analysis of their
approximation accuracy. In practical settings, for example, one learns a map $\hT$ using some optimization scheme involving the target measure $\nu$ and some chosen reference measure $\eta$; here one must make a variety of approximation choices, and in general $\hat{T}$ does not transport $\eta$ to $\nu$. We therefore ask: 
\vspace{.2cm}
\begin{quote}
\begin{center}
 If an algorithm provides a map $\hT$, is the pushforward distribution $\widehat \nu = \hT _{\sharp} \eta$ a good approximation of the target measure $\nu$?
 \end{center}
\end{quote}
\vspace{.2cm}
{\em The primary goal of this article is to provide an answer to this question by {\bf (1)} providing error analysis and rates for a broad abstract class of measure-transport algorithms, which translate to the accuracy of the resulting sampling procedure described above;
and {\bf (2)} showing that many algorithms, including well-known methods
such as the triangular map approach of~\cite{marzouk2016sampling}, fall within this class and thus inherit our error analysis.} 

\smallskip
In considering measure transport algorithms, our primary motivating application is sampling. Measure transport is an emerging approach to sampling, where perhaps the most popular alternatives are Monte Carlo methods~\cite{casella}, which include Markov chain Monte Carlo (MCMC) and sequential Monte Carlo (SMC) algorithms. In general, these methods produce samples that are approximately distributed according to the target $\nu$. Such samples may also be highly correlated or non-uniformly weighted, and the associated algorithms might not be easily parallelizable \cite{casella}, leading to high computational costs.

When learned from a (typically unnormalized) 
density function, transport methods can be viewed as variational inference (VI) methods \cite{blei2017variational, zhang2018advances}. Broadly speaking, VI aims to approximate $\nu$ with a measure $\nu_\theta$ belonging to a parametric family; in the case of transport methods, this family can be identified as the set of measures that are pushforwards of the reference by a prescribed family of maps. The latter choice of transport family, therefore, has a direct impact on the accuracy of the approximation to $\nu$.

We primarily focus on the approximation problem, which as explained above, is immediately relevant to the task of drawing samples from $\nu$. We will not directly address the {\em statistical} problem of density estimation from \emph{finite} collections of samples using transport (see, e.g., \cite{wang2022minimax}), but our results are relevant to understanding the bias of such density estimation schemes.



We now summarize our main contributions in Section~\ref{subsec:overview}, followed by a detailed review of  the relevant literature  in Section~\ref{subsec:relevant-literature}. Key  notations  and definitions are provided in
in Section~\ref{subsec:Notation}. An outline of the article is presented in Section~\ref{subsec:outline}.

\subsection{Contributions}\label{subsec:overview}
Given a set $\Omega \subseteq \R^d$ equipped with two Borel probability measures, a {\it target} $\nu$ and a {\it reference} $\eta$, we consider the approximation to $\nu$: 
$$ \hnu \equiv \hT_\sharp \eta \, , \qquad  \hT \equiv 
\rev{ 
\argmin_{S \in \widehat{\mmT}} 
}
\, D(S_\sharp \eta, \nu) \, ,$$ 
where \rev{$\widehat \mmT$} denotes a parameterized approximation class of maps, e.g., 
polynomials of a certain degree, and $D\colon \PP(\Omega) \times \PP(\Omega) \to \R_+$ is a statistical divergence between probability measures, e.g., the Wasserstein distance or the Kullback-Leibler (KL) divergence. Our goal is to obtain bounds of the form 
\begin{equation*}
    D(\hnu, \nu) \le \rev{ C  
    \dist_{\| \cdot \|} ( \widehat \mmT, \TT),
    }
\end{equation*}
\rev{where $\widehat{\mmT}$ is the approximation class contained in a large enough Banach space of maps $\mmT$ from $\Omega$ to itself, the norm $\| \cdot \|$ is that of some space containing $\mmT$,
and $C> 0$ is a constant independent of $\widehat{\mmT}$}. \rev{We note that $\TT$ can be taken to be any} transport map that satisfies the {\em exact} pushforward relation $\TT_\sharp \eta = \nu$.
We present an abstract framework for obtaining such bounds in Section~\ref{sec:main-results} by combining three theoretical ingredients:
\begin{enumerate}[label=(\roman*)]
    \item {\bf Stability} estimates of the form 
    \rev{$D(F_\sharp \eta, G_\sharp \eta) \le C \| F - G \|$ 
for all maps $F, G \in \mmT$};
\item {\bf Regularity} results showing that $\TT \in \mT \subset \mmT$ where $\mT$ is a 
smoothness class, e.g., \rev{Sobolev space $H^{k}$ for $\mmT = L^2$ for a sufficiently 
large index $k$;}
\item {\bf Approximation} results that provide upper bounds  for \rev{$\dist_{\|\cdot \|} ( \widehat{\mmT}, T^\dagger)$.}
\end{enumerate} 
 Items (ii) and (iii) are independent of the choice of $D$ and
can be addressed using off-the-shelf results: Regularity can be derived from measure and elliptic PDE theory, e.g., on the regularity of optimal transport maps. Approximation bounds can be obtained from existing results, e.g., the approximation power of polynomials in $L^2$.

Stability ({i})
is the only part of the argument which depends explicitly on the choice of $D$, 
and its development is a major analytical contribution of this paper. While in the context of uncertainty propagation and inverse problems, some results have been proven in this direction when $D$ is the $L^q$ distance between the densities \cite{butler2018convergence, butler2022p, ditkowski2020density, sagiv2022spectral} or the Wasserstein distance \cite{sagiv2020wasserstein}, we provide new results for the Wasserstein distance, the maximum mean discrepency (MMD), and the KL divergence. These stability results (see Section~\ref{sec:stability}) are also  of independent interest in the statistics, applied probability, and data science communities.


Our third contribution is a series of applications (Section~\ref{sec:applications})
where we obtain rates of convergence of $D(\hnu, \nu)$ for various 
parameterizations of $\hT$ and under different assumptions on the target 
$\nu$ and the reference $\eta$. We supplement these 
applications with numerical experiments in Section~\ref{sec:numerics}
that demonstrate our theory, and even explore the validity of our approximation results beyond the current set of hypotheses. Lastly, for our applications, we present a new result concerning the approximation accuracy of neural networks on unbounded domains, Theorem \ref{thm:nn_unbounded_approx}, which is of independent interest.

\subsection{Review of relevant literature}\label{subsec:relevant-literature}
We focus our review of literature on theory and computational approaches to
measure transport. For a comprehensive review of Monte Carlo algorithms, see \cite{andrieu2003introduction, stuart-mcmc, casella}. 

Transportation of measure is a classic 
topic in probability and measure theory  
\cite[Ch.~9]{bogachev2}. While our paper is not limited to a particular type of transport map, let us first briefly review notable classes of such maps: optimal transport (OT) maps and triangular maps.

The field of OT is said to have been initiated by
 Monge \cite{monge1781memoire}, with the modern formulation introduced in the seminal work of
Kantorovich \cite{kantorovich1942translocation}. Since then, the theory of OT has flourished \cite{ambrosio2005gradient, benamou2000computational, gangbo1996geometry, santambrogio2015optimal, villani-OT}, with applications in PDEs \cite{figalli2017monge, gutierrez2001monge}, 
econometrics \cite{galichon2017survey, galichon2018optimal},
statistics \cite{panaretos2020invitation}, and data science \cite{peyre2019computational}, among other fields. Optimal maps enjoy many useful properties that we also utilize in our applications in 
Section~\ref{sec:applications}, such as uniqueness and regularity \cite{caffarelli1992regularity, caffarelli2000monotonicity, colombo2017lipschitz, figalli2017monge}. The development of numerical
algorithms for solving OT problems is a contemporary topic 
\cite{peyre2019computational}, although the majority of research in the 
field is focused on the solution of discrete OT problems and estimating 
Wasserstein distances \cite{cuturi2013sinkhorn, genevay2018learning},
with the Sinkhorn algorithm and its variants
being considered state-of-the-art \cite{peyre2019computational}. The numerical approximation of continuous OT maps is an even more recent subject of research. One approach has been to compute the Wasserstein-$2$ optimal transport map via numerical solution of the Monge--Amp\`{e}re PDE \cite{benamou2000computational, benamou2014numerical, froese2012numerical, lindsey2017optimal, nochetto2019pointwise}. Other modern approaches to this 
task involve \emph{plug-in estimators}: the discrete OT problem 
is first solved with the reference and target measures replaced by empirical approximations, then the 
discrete transport map is extended outside the sample set to obtain an approximate Monge map. The barycentric projection method, 
cf.~\cite[Rem.~4.11]{peyre2019computational} and \cite{seguy2018large, deb2021rates, pooladian2021entropic},
is the most popular among these, although other approaches such as 
 Voronoi tesselations \cite{li2021quantitative} are also possible. 
 The aforementioned works mainly consider the convergence of plug-in estimators 
 as a function of the number of samples in the empirical approximations of reference 
 and target measures (sample complexity), as well as the effect of 
 entropic regularization. This is in contrast to the problems of interest to us, where 
 we are mainly focused on the parameterizations of transport maps as opposed to sample 
 complexity. 

Triangular maps \cite[10.10(vii)]{bogachev2} are an alternative approach to transport 
that enjoy some of the useful properties of OT maps, together with additional structure 
that makes them computationally convenient. While triangular maps are  not optimal in the usual transport sense,
they can be obtained as the limit of a sequence of OT maps with increasingly asymmetric transport costs \cite{bonnotte2013knothe}.   The development of triangular maps 
in finite dimensions is attributed to the independent papers of Kn{\"o}the \cite{knothe1957contributions} 
and Rosenblatt \cite{rosenblatt1952remarks}; hence these maps are often called \emph{Kn{\"o}the--Rosenblatt (KR)} rearrangements. In the finite-dimensional setting, KR maps can be 
constructed explicitly and 
enjoy uniqueness and regularity properties that make them attractive in practice; cf.~\cite[Sec.~2.3]{santambrogio2015optimal} and \cite{bogachev2006nonlinear, bogachev2005triangular, zech2022sparseI, zech2022sparseII}. Triangular maps have other properties 
that make them particularly attractive for computation. 
For example, triangular structure enables fast (linear in the dimension) evaluation 
of log-determinants, which is essential when evaluating densities or identifying maps by minimizing KL divergence \cite{kobyzev2020normalizing, marzouk2016sampling, papamakarios2021normalizing, tabak2013family, tabak2010density}; \rev{see also  \cite{cui2022deep, westermann2023measure} where efficient transport maps are constructed by leveraging triangular maps within a deep tensor train formulation}. Triangular structure also enables the inversion of the maps using root-finding algorithms, akin 
to back substitution for triangular linear systems \cite{spantini2019coupling}. Finally, 
triangular maps can be used not only for transport but also for conditional simulation 
\cite{kovachki2020conditional, marzouk2016sampling}, a property that is not shared by OT maps 
unless additional constraints are imposed \cite{carlier2016vector, muzellec2019subspace}.
These properties have led to wide adoption of triangular flows in 
practical applications, ranging from Bayesian inference \cite{marzouk-opt-map, marzouk2016sampling, parno2018transport, spantini2018inference} to NFs 
and density estimation \cite{jaini2019sum, kobyzev2020normalizing, papamakarios2021normalizing, papamakarios2017masked}.

Analysis of transport map approximation and estimation has, for the most part, focused on the specific cases of optimal transport (OT) and Knothe-Rosenblatt (KR) maps.
The articles \cite{zech2022sparseI, zech2022sparseII} analyze the regularity of 
the KR map under assumptions on the regularity of reference and target densities, and 
obtain rates of convergence for sparse polynomial and neural network approximations 
of the KR map and for the associated pushforward distributions. In contrast, the articles \cite{irons2022triangular, wang2022minimax} 
analyze the sample complexity of algorithms for \emph{estimating} 
KR maps from empirical data: \cite{irons2022triangular} focuses on the statistical sample complexity of estimating the KR maps themselves; on the other hand, \cite{wang2022minimax} focuses on the  density estimation problem (e.g., characterizing error of the estimated pushforward distribution $\hT_\sharp \nu$ in Hellinger distance) using general classes of transports, though with specialized results for the triangular case. 
The article \cite{hutter2021minimax} studies convergence of a wavelet-based estimator of the Wasserstein-$2$ (or $L^2$) optimal (Brenier) map, obtaining minimax approximation rates (in $L^2$, and therefore also in the Wasserstein-$2$ metric on the transported measures) for estimation of the map from finite samples; this framework has been substantially generalized in \cite{pooladian2023minimax}. The work of \cite{lu2020universal} also analyzes the approximation error and sample complexity of a generative model based on deep neural network approximations of $L^2$ optimal transport maps.
 
\paragraph{Novelty} In the context of the measure transport literature, our contributions focus on a broader 
class of transport problems. In contrast with OT, we do not require our transport 
maps to push a reference measure exactly to a target; in other words, we relax the 
marginal constraints of OT, which enables freedom in the choice of the 
approximation class for the transport maps. Furthermore, inspired by  
the development of GANs and NFs, we consider transport maps that are obtained not as 
minimizers of a transport cost, 
\rev{
but as minimizers of a discrepancy between the pushforward 
of the reference and the target measure}.
Such problems are often solved in the computation of KR maps \cite{marzouk2016sampling} or NFs \cite{kobyzev2020normalizing} by
minimizing a KL divergence. We generalize this idea to other types of 
divergences and losses including Wasserstein and MMD; these losses have been shown to have good performance 
in the context of  GANs \cite{arjovsky2017wasserstein, binkowski2018demystifying, li2017mmd}. We further develop a general framework for obtaining error bounds that can be adapted to new divergences after 
proving appropriate stability results.

A second point of departure for our work is that we primarily consider the error that arises in the approximation of \emph{measures} via transport maps. This viewpoint stands in contrast to previous literature, in which a \emph{particular} map is first approximated and its pushforward is a derived object of interest, as in \cite{hutter2021minimax, irons2022triangular, zech2022sparseI,zech2022sparseII}. To our knowledge, this aspect of computational transport is mostly unexplored.

\subsection{Notation and definitions}\label{subsec:Notation}
Below we summarize some basic notation and definitions used throughout the article:
\begin{itemize}
    \item For $\Omega \subseteq \R ^d$ and $\Omega' \subseteq \R^m$, let $C^r(\Omega;\Omega')$ be the space of functions $f\colon \Omega \to \Omega'$ with $r\in \mathbb{N}$ continuous derivatives in all coordinates. 
    \item We write $J_f$ to denote the Jacobian matrix of $f:\Omega \to \Omega '$. 
    \item We use $\PP(\Omega)$ to denote the space of  Borel probability measures on $\Omega$.
    
    \item For $\mu \in \PP(\Omega)$, denote the weighted $L^p_{\mu}$ norm by 
    $\|f\|_{L^p_{\mu}(\Omega; \Omega')} \equiv  ( \int_{\Omega} |f|^p \, d\mu )^{1/p} \, , $
    where $|\cdot |$ is the usual Euclidean norm, as well as the 
    corresponding function space $L^p_\mu(\Omega; \Omega')$. 
    
    \item For all $p\geq 1$ and $k\in \mathbb{N}$, denote the weighted Sobolev space $W_{\mu}^{k,p}(\Omega;\Omega')$ as the space of functions $f:\Omega \to \Omega$ with (mixed) derivatives of degree $\leq k$ in $L^p_{\mu} (\Omega; \Omega')$ equipped with the norm 
    $ \|f\|_{W^{k,p}_{\mu}(\Omega; \Omega')} \equiv ( \sum_{\|{\bf j}\|_1 \leq k} \|D^{\bf j} f\|_{L^p_{\mu}(\Omega; \Omega')}^p )^{1/p},$ 
    where ${\bf j}= (j_1, \ldots, j_d)\in \mathbb{N}^d$ and $\|{\bf j}\|_1 = j_1 + \cdots +j_d$. 
    We write $H_\mu^k(\Omega; \Omega') = W^{k, 2}_\mu(\Omega; \Omega')$ following the standard notation in functional analysis 
    and suppress the subscript $\mu$ whenever the Lebesgue measure is considered. We will also suppress the range and domain 
    of the functions to simplify notation when they are clear from context.
    \item Given two measurable spaces $(X, \Sigma _x), (Y, \Sigma _y)$, a measurable function $T:X \to Y$, and a measure $\mu$ on $X$, we define $T_{\sharp} \mu$, the pushforward of $\mu$ by $T$, as a measure on $Y$ defined as  
    $ T_{\sharp} \mu (E) \equiv \mu \left( T^{-1}(E)\right)$ for all $ E \in \Sigma_y$ 
    where $T^{-1}$ is to be understood in the set-valued sense, i.e., $T^{-1}(E) = \{ x\in X ~~ | ~~ T(x)\in E \}$. Similarly, we denote by $T^{\sharp}\mu \coloneqq (T^{-1})_{\sharp}\mu$ the {\em pullback} of a measure (on $X$) for any measure $\mu$ on $Y$.
    
    \item Throughout this paper, $\eta$ is the reference probability measure on $\Omega$, $\nu$ is the target measure, and $\TT$ is an exact pushforward $\TT_{\sharp} \eta = \nu$.
    \item     We say that a function $D: \PP(\Omega) \times \PP(\Omega) \to [0, + \infty]$ is a divergence 
    if $D(\mu, \nu) = 0$ if and only if $\mu = \nu$.
\end{itemize}

\subsection{Outline}\label{subsec:outline}
The rest of the article is organized as follows:
Section~\ref{sec:main-results} summarizes our main contributions and a general framework for the error analysis of 
measure transport problems.
Section~\ref{sec:stability} follows with stability analyses for 
Wasserstein distances, MMD, and the KL divergences, with some of the major technical proofs postponed 
to Section~\ref{app:stability-proofs}.  Section~\ref{sec:applications} presents various 
applications of our general error analysis framework and of our stability analyses, including new approximation 
results for neural networks on unbounded domains, again with some technical proofs postponed to Section~\ref{sec:application-proofs}.
Section~\ref{sec:numerics} presents our numerical experiments, followed by 
concluding remarks in Section~\ref{sec:conclusions}.

\section{Error analysis for measure transport}\label{sec:main-results}
In this section we present our main theoretical results concerning the 
error analysis of measure transport problems.
We present a general strategy for obtaining error bounds by combining: stability 
results for a divergence of interest, regularity results for an appropriate 
fixed transport map, and approximation theory for high-dimensional functions. 


Consider a Borel set $\Omega \subseteq \R ^d$ and let $\eta, \nu \in \PP( \Omega)$. Our goal is to approximate the target $\nu$
by a pushforward of the reference $\eta$. To do so, we consider:
\begin{itemize}
    \item \rev{$\mmT$, a class of functions in a Banach space of mappings from $\Omega$ to itself;}
 \item $\widehat{\mmT} \subseteq \mmT$, a closed (possibly finite-dimensional) subset; and
 \item $D$, a statistical divergence on $\PP (\Omega)$  \rev{(or some subset which contains both $\nu$ and $\eta$)}.
\end{itemize} 
We propose to approximate the target $\nu$ with another measure~$\hnu$, defined as follows:
\begin{equation}\label{eq:abst_opt}
\hnu = \hT_\sharp \eta \, , \qquad \hT \in 
\rev{ 
\argmin\limits_{S \in \widehat{\mmT}} 
}
\,D(S_\sharp \eta, \nu) \, . 
\end{equation} 
 Note that, in general, the minimization problem in  \eqref{eq:abst_opt} does not admit a unique solution; hence, by writing ``$\hT \in \argmin $'' we mean, here and throughout the paper, an arbitrary choice of a global minimizer.
Our goal  is to bound the approximation error $D(\hnu,\nu)$. While our focus 
in this article is on cases where
$D$ is a Wasserstein-$p$ metric, the MMD distance,  or the KL divergence, we 
give an abstract theoretical result that is applicable to any choice of $D$ 
once a set of assumptions are verified. 

\begin{assumption}\label{ass:abstract}
The  measures $\eta, \nu \in \PP(\Omega)$ and the divergence $D$ satisfy the following conditions:
\begin{enumerate}[label=(\roman*)]
\item {\it (Stability)} For any set of maps $F, G \in \mmT$, \rev{there exists a constant $C>0$ (independent of $F, G$)}  such that
\begin{equation}\label{eq:general-stability-estimate}
    D(F_\sharp \eta, G_\sharp \eta) 
\leq C 
\rev{
\|F- G\| \, ,
}
\end{equation}
 \rev{for some norm $\| \cdot \|$ of an ambient space containing $\mmT$.}

\item {\it (Feasibility)} There exists a map $\TT \in \mmT$ satisfying  $\TT_\sharp \eta = \nu$.

\end{enumerate}
\end{assumption}

Condition (i) simply states that 
 the divergence between 
pushforwards of $\eta$ is controlled by the distance between the maps. It is important 
to highlight that this condition is independent of the target $\nu$ and only needs to 
be verified for a fixed reference $\eta$ and the class $\mmT$. This, in turn, implies that the constant 
$C>0$ may depend on the choice of $\eta$ and $\mmT$.\footnote{For example, in Section~\ref{subsec:KL-error-bounds} we verify stability of KL  when $\Omega$ is unbounded
 only when $\eta$ is a Gaussian.} Condition~(ii) involves both the reference and 
 target measures and requires the existence of a transport map 
 between the two measures. 
 Many choices of $T^\dagger$ are often possible; for example optimal or triangular
 transport maps 
 exist under mild conditions. Thus, condition (ii) asks for $\mmT$ to be sufficiently large to contain at least one such map. In order to obtain useful 
 error rates, we typically like to show a stronger result, that is \rev{$ T^\dagger$ belongs 
 to a smaller, more regular, subset of $\mmT$}.
 For example, one may take  $\mmT = L^2$ \rev{and have $T^\dagger \in H^{k}$ for some $k \geq 1$.} 
  We dedicate Section \ref{sec:stability} to verifying condition (i), while existing results 
  from literature will be used to verify condition (ii) depending on 
  the application at hand, as outlined in Section~\ref{sec:applications}.
We are now ready to present our main abstract theoretical result. 

\begin{theorem}\label{thm:abstract-error}
Suppose Assumption~\ref{ass:abstract} holds and consider $\hnu$ as in \eqref{eq:abst_opt}. 
Then 
it holds that 
\begin{equation}\label{eq:thm:abstract-error}
    D(\hnu, \nu) \le C \: \rev{ 
    \dist_{\|\cdot \|} \left( \widehat{\mmT} \:, \: \TT \right),
    }
\end{equation}
where $C>0$ is the same constant as in Assumption~\ref{ass:abstract}(i).
\end{theorem}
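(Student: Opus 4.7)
The proof is essentially a three-line chain: optimality of $\hT$ in $\hmT$, feasibility of $\TT$, and the stability estimate. The plan is to write this chain carefully and then take an infimum.

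First, I would fix an arbitrary $S \in \hmT$. Since $\hmT \subseteq \mmT$, we have $S \in \mmT$, and by Assumption~\ref{ass:abstract}(ii) also $\TT \in \mmT$, so the stability estimate \eqref{eq:general-stability-estimate} applies to the pair $(S, \TT)$. Combining this with the fact that $\TT_\sharp \eta = \nu$ (feasibility) gives
\begin{equation*}
D(S_\sharp \eta, \nu) \;=\; D(S_\sharp \eta, \TT_\sharp \eta) \;\le\; C\, \|S - \TT\|_{\mathscr{B}}.
\end{equation*}

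Next, I would use that $\hT$ is a global minimizer of $D(\,\cdot\,_\sharp \eta, \nu)$ over $\hmT$. Since $S \in \hmT$ is admissible and $\hnu = \hT_\sharp \eta$,
\begin{equation*}
D(\hnu, \nu) \;=\; D(\hT_\sharp \eta, \nu) \;\le\; D(S_\sharp \eta, \nu) \;\le\; C\, \|S - \TT\|_{\mathscr{B}}.
\end{equation*}
The left-hand side does not depend on $S$, so I can take the infimum over $S \in \hmT$ on the right-hand side, which by definition of distance in a Banach space yields
\begin{equation*}
D(\hnu, \nu) \;\le\; C \inf_{S \in \hmT} \|S - \TT\|_{\mathscr{B}} \;=\; C\, \dist_{\mathscr{B}}(\hmT, \TT),
\end{equation*}
which is \eqref{eq:thm:abstract-error}.

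There is no real obstacle in this argument, since Assumption~\ref{ass:abstract} is designed precisely to make these three steps fit together. The only subtlety worth a sentence of comment is that the minimization in \eqref{eq:abst_opt} need not have a unique solution: the bound holds for \emph{any} measurable selection of $\hT$, because the optimality inequality $D(\hT_\sharp \eta, \nu) \le D(S_\sharp \eta, \nu)$ uses only the minimum value of the objective, not the identity of the minimizer. Existence of at least one minimizer is implicitly assumed in \eqref{eq:abst_opt}; alternatively, one could state the result for an $\veps$-minimizer, in which case an additive $\veps$ would appear on the right-hand side of \eqref{eq:thm:abstract-error}.
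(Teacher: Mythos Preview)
Your proof is correct and follows essentially the same approach as the paper: optimality of $\hT$, feasibility $\TT_\sharp\eta=\nu$, and the stability estimate, combined in the same order. The only cosmetic difference is that the paper uses closedness of $\hmT$ to pick an actual minimizer $T^*\in\hmT$ of $\|T-\TT\|_{\mathscr{B}}$ and evaluates at $T=T^*$, whereas you take the infimum directly; your version is slightly more economical since it does not rely on the distance being attained.
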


\begin{proof}
Since $\hT$ is the minimizer of \eqref{eq:abst_opt}, it follows that 
\begin{equation*}
    D(\hnu,\nu) = D(\hT_\sharp\eta, \TT_\sharp \eta) \leq D(T_\sharp\eta, \TT_\sharp \eta) \, , \qquad \forall \, \rev{ T\in \widehat{\mmT} } \,.
    \end{equation*}
Then Assumption~\ref{ass:abstract}(i) yields
\begin{equation}\label{eq:reg+stab}
    D(\hnu,\nu) \le \rev{ C \|T - \TT \| \, , \qquad \forall \, T\in \widehat{\mmT} }  \, .
\end{equation}
Now consider the map 
\begin{equation}\label{eq:Tstar_def}
T^* \coloneqq 
\rev{
\argmin_{T\in \widehat \mmT} \|T- T^\dagger\| \, ,    
}
\end{equation}
which exists since $\widehat \mmT$ is closed in $\mmT$. Evaluating 
the right hand side of \eqref{eq:reg+stab} with $T = T^*$ yields the 
desired result since 
\rev{$\| T^* - T^\dagger \| = \dist_{\|\cdot \|} ( \widehat{\mmT}, T^\dagger)$.}
{}
\end{proof}

The above theorem  reduces the question of controlling the error between 
$\hnu$ and $\nu$ to that of controlling the approximation error of $T^\dagger$ 
within the class \rev{ $\widehat \mmT$ }---in other words, an exercise in high-dimensional 
function approximation. This observation can guide the design of practical algorithms:
 obtaining optimal 
convergence rates requires the identification of \rev{ $T^\dagger \in  \mmT$ that is maximally 
regular.} Observe that, the maps $\TT, T^*$ in \eqref{eq:Tstar_def} are purely analytic 
elements of our theory and are not explicit in the optimization problem \eqref{eq:abst_opt} {\em and we have some freedom in choosing both.}
We can then choose  a (possibly finite-dimensional) approximating 
class \rev{$\widehat \mmT \subseteq \mmT$} that can achieve the fastest possible convergence rate 
for \rev{$\TT$.}
Afterwards, choosing $D$ can be guided by two main considerations: first, whether the stability 
condition can be verified with the other elements of the framework in place and, second, whether minimizing $D$ is a computationally tractable task.

The error bound \eqref{eq:thm:abstract-error} quantifies the trade-off between 
the complexity of the approximating class \rev{$\widehat \mmT$} and the accuracy of the algorithm:
if the approximating class \rev{$\widehat \mmT$} is rich and large (e.g., it is the space of polynomials of a very high degree), it can approximate $\TT$ well and so the right hand side of  \eqref{eq:thm:abstract-error}, the error estimate, is small. On the other hand, in many cases a rich (or large) class \rev{$ \widehat \mmT$} would make the algorithm \eqref{eq:abst_opt} more costly, as we are optimizing over a larger family of parameters/functions.
 In the extreme case where \rev{$\widehat \mmT= \mmT$} then $\hnu = \nu$ as expected and $D(\hnu,\nu)=0$ trivially (since $D$ is a divergence), independently of whether \rev{$\TT$ is unique in $\mmT$.}

\section{Stability analysis}\label{sec:stability}
As mentioned earlier, 
a major analytic advantage of  Theorem~\ref{thm:abstract-error}
is that it allows us to use existing results from approximation theory 
to control the error of $\hnu$. Applying this result requires us 
to verify Assumption~\ref{ass:abstract}. Among the two conditions, feasibility can also 
be verified using existing results from theory of transport maps, and optimal transport in particular.
The stability condition, however, needs development
and is the subject of this section. 

Let us review some existing results, 
\rev{stating with the case of scalar valued maps, i.e.,}
maps from $[0,1]^d$ to $\R$. If the divergence $D$ is taken to be the $L^p$-norm between the densities (which coincides with the total variation for $p=1$ and the mean square error for $p=2$), then \eqref{eq:general-stability-estimate} holds when $\mmT$ 
is taken as $C^1([0,1]^d)$ or $H^s([0,1]^d)$ for sufficiently large $s\geq 1$ \cite{ditkowski2020density, sagiv2022spectral}. In particular, when $d=1$, one can take $s=1$, which is conjectured to be sharp. Much more robust results are obtained when we choose $D=W_p$, the Wasserstein-$p$ distance, and $\mathcal{T}=L^p(\Omega;\varrho)$ for any $\varrho \in \PP(\Omega)$ \cite{sagiv2020wasserstein}; see Section \ref{subsec:Wp-error-bounds} for a generalized and simplified proof. 

\rev{For the case of vector valued maps,
the $L^p$ norm between the densities for $1\leq p \leq \infty$ is considered in \cite{butler2018convergence, butler2022p} for maps from $\R^d$ to $\R ^m$ for arbitrary dimensions $d$ and $m$, but under somewhat different and more stringent assumptions. When the maps are triangular from a compact domain $\Omega$ to itself and $\mathcal{T}= W^{1, \infty}(\Omega)$, stability results for a wide range of divergences are derived in \cite{zech2022sparseI}. In addition, a bound 
in the Hellinger distance for tensor train approximations of 
triangular maps can be found in \cite[Thm.~1]{cui2022deep}.}

Below we present our stability results for the Wasserstein distance in Section~\ref{subsec:Wp-error-bounds}, followed by 
MMD in Section~\ref{subsec:MMD-error-bounds} and KL in Section~\ref{subsec:KL-error-bounds}.

\subsection{Wasserstein distances}\label{subsec:Wp-error-bounds}
We now show the stability estimate when $D$ is taken to be a Wasserstein distance.
 We recall some basic definitions first:
Let $p\geq 1$ and denote by $\PP^p(\Omega)$ the subset of $\PP(\Omega)$ consisting of probability measures 
with finite $p$-th moments. 
Then for $\mu, \nu \in \PP^p(\Omega)$ we define their Wasserstein-$p$ distance \begin{equation}\label{eq:wasdef}
    \W_p(\mu, \nu) \coloneqq \inf\limits_{\pi \in \Gamma(\mu,\nu)} K_p^{\frac{1}{p}}(\pi) \, , 
    \qquad K_p(\pi) \coloneqq \int\limits_{\Omega \times \Omega} |x-y|^p \, \dd \pi (x,y) \, ,
\end{equation}
where $|x-y|$ is the Euclidean distance in $\R^d$ and $\Gamma (\mu, \nu)$ is the set of all Borel probability measures on $\Omega \times \Omega$ with marginals $\mu$ and $\nu$, i.e.,
\begin{equation}
    \mu(A) = \pi(A\times \Omega) \, , \qquad \nu (A)=  \pi(\Omega \times A) \, ,
\end{equation}
for any $\pi \in \Gamma(\mu, \nu)$ and any Borel set $A\subseteq \Omega$. See \cite{villani-OT, santambrogio2015optimal} for a detailed treatment of Wasserstein distances including their 
extensions to metric spaces. Our main stability result then reads as follows; 

\begin{theorem}\label{lem:wplp}
Let $\Omega \subseteq \mathbb{R}^d$ and $\Omega' \subseteq \mathbb{R}^s$
be  Borel sets and fix  $\mu \in \PP^p(\Omega)$ for $p \ge 1$. 
For any $q \ge p$ and  $F, G \in L^q_\mu(\Omega;\Omega')$
it holds that
\begin{equation}\label{eq:wp_lp}
W_p (F_\sharp \mu, G_\sharp \mu) \leq \|F-G\|_{L^q_\mu (\Omega; \Omega')}.
\end{equation}
\end{theorem}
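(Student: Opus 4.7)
The plan is to exploit the fact that any joint law on $\Omega' \times \Omega'$ with marginals $F_\sharp \mu$ and $G_\sharp \mu$ provides an upper bound for $W_p$, and to construct a particularly convenient such coupling directly from the map $x \mapsto (F(x), G(x))$. The required $L^q$ bound then follows from the monotonicity of $L^p$-norms on a probability space, which handles the gap between $p$ and $q$.

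Concretely, I would first verify that $F_\sharp \mu$ and $G_\sharp \mu$ lie in $\PP^p(\Omega')$ so that $W_p$ is well-defined: since $\mu$ is a probability measure and $F \in L^q_\mu \subseteq L^p_\mu$, a change of variables gives $\int_{\Omega'} |y|^p \, \dd(F_\sharp \mu)(y) = \int_\Omega |F(x)|^p \, \dd\mu(x) = \|F\|_{L^p_\mu}^p < \infty$, and likewise for $G$. Next, define the map $H \colon \Omega \to \Omega' \times \Omega'$ by $H(x) = (F(x), G(x))$ and set $\pi \coloneqq H_\sharp \mu$. A direct computation with the definition of pushforward shows that for any Borel $A \subseteq \Omega'$,
\begin{equation*}
\pi(A \times \Omega') = \mu(F^{-1}(A)) = (F_\sharp \mu)(A), \qquad \pi(\Omega' \times A) = \mu(G^{-1}(A)) = (G_\sharp \mu)(A),
\end{equation*}
so $\pi \in \Gamma(F_\sharp \mu, G_\sharp \mu)$.

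Using $\pi$ as a (generally suboptimal) coupling in the definition \eqref{eq:wasdef} and applying the change of variables formula yields
\begin{equation*}
W_p^p(F_\sharp \mu, G_\sharp \mu) \;\leq\; \int_{\Omega' \times \Omega'} |x-y|^p \, \dd\pi(x,y) \;=\; \int_\Omega |F(x) - G(x)|^p \, \dd\mu(x) \;=\; \|F - G\|_{L^p_\mu}^p.
\end{equation*}
This already establishes the case $q = p$. For $q > p$, since $\mu$ is a probability measure, Jensen's inequality (equivalently, monotonicity of $L^r_\mu$ norms in $r$) gives $\|F - G\|_{L^p_\mu} \leq \|F - G\|_{L^q_\mu}$, completing the proof.

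I do not anticipate any real obstacles: the coupling $(F,G)_\sharp \mu$ is the canonical object here, and the only technical points are the measurability of $H$ (which is immediate from the measurability of $F$ and $G$), and the routine verification that the pushforward marginals match. If anything, the main subtlety worth spelling out carefully in the write-up is that no regularity of $F$, $G$, or $\mu$ beyond integrability is needed, which is what makes this stability estimate so broadly applicable when invoked via Theorem~\ref{thm:abstract-error}.
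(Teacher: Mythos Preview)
Your proposal is correct and follows essentially the same approach as the paper: construct the coupling $(F,G)_\sharp\mu$, bound $W_p^p$ by the resulting transport cost, change variables to obtain $\|F-G\|_{L^p_\mu}^p$, and then upgrade to $L^q_\mu$ via Jensen's inequality. Your write-up is in fact slightly more thorough, as you also check that $F_\sharp\mu, G_\sharp\mu \in \PP^p(\Omega')$ and verify the marginal conditions explicitly.
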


\begin{proof}
    Let $\pi$ be a coupling with marginals $F_\sharp \mu$ and $G_\sharp \mu$. Since the $W_p$ distance is defined as the infimum over all couplings $\pi \in \Gamma(F_\sharp \mu, G_\sharp \mu)$, the distance can be bounded by one particular coupling. Choosing $\pi$ to be the joint law of $F_{\sharp}\mu$ and $G_{\sharp}\mu$, i.e., $$\pi (A\times B) = \mu \left\{ t\in \Omega ~{\rm s.t.}~F(t)\in A ~{\rm and } ~G(t)\in B\right\}, $$
    for every measurable $A,B\subseteq \R$. We have that
\begin{equation*}
W_p^p(F_\sharp \mu,G_\sharp \mu)= \inf_{\pi \in \Gamma(F_\sharp \mu, G_\sharp \mu)} \int_{\Omega' \times \Omega'} |x - y|^p \pi(\dd x,\dd y) \leq \int_{\Omega' \times \Omega' } |x - y|^p 
(F \times G)_\#\mu( \dd x, \dd y).
\end{equation*} 
Then, by a change of variables, we write
\begin{equation*}
\int_{ \Omega' \times \Omega'} |x - y|^p (F \times G)_\#\mu(\dd x, \dd y) = \int_{\Omega} |F(t) - G(t)|^p \dd\mu(t) = \| F -G \|_{L^p_{\mu}(\Omega ;\Omega')}^p.
\end{equation*}
Lastly, using Jensen's inequality with the concave function $x \mapsto x^{p/q}$ for $p/q \leq 1$, we have
\begin{align*}
\|F - G \|_{L^p_{\mu}(\Omega ;\Omega')}^{p} &= \int_{\Omega} (|F(t) - G(t)|^{q})^{p/q} \dd\mu(t)\\  &\leq \left(\int_{\Omega} |F(t) - G(t)|^{q} \dd\mu(t) \right)^{p/q} = \| F - G \|_{L^q_{\mu}(\Omega ;\Omega')}^{p} \, .
\end{align*}

\end{proof}

We note the simplicity of the above result and its proof, and in particular the fact 
that we only need the maps $F,G$ to be appropriately integrable with respect to 
the reference measure $\mu$. Indeed, the Wasserstein stability result 
is the most robust and theoretically convenient, of our three stability 
theorems. Furthermore, the above result implies that $W_p$ 
satisfies Assumption~\ref{ass:abstract}(i) with constant $C = 1$.

\subsection{MMD}\label{subsec:MMD-error-bounds}
We now turn our attention to the case where $D$ is taken to 
be the MMD defined by a kernel $\kappa$. We recall the definition 
of MMD following \cite{muandet2017kernel}.
A function $\kappa: \Omega \times \Omega \to \R$ is
called a Mercer kernel if it is symmetric, i.e., $\kappa(x, y) = \kappa(y, x)$, 
and positive definite, in the sense that
\begin{equation*}
    \sum_{i,j=1}^m c_i c_j \kappa(x_i, x_j) \ge 0 \, , \qquad 
    \forall m \in \mathbb{N}, \:
    c_1, \dots,  c_m \in \R, \:  
    x_1, \dots, x_m \in \Omega.
\end{equation*}
Any Mercer kernel $\kappa$ defines a unique reproducing kernel Hilbert space (RKHS) of functions from $\Omega$ to $\R$. Consider first the set of functions 
\begin{equation*}
    \widetilde{\mK} \coloneqq \left\{ f: \Omega \to \R : 
     f(x) = \sum_{j=1}^m a_j \kappa(x, x_j), \quad \text{ for } m \in \mathbb{N}, 
     \: a_j \in \R, \: x_j \in \Omega
    \right\} \, .
\end{equation*}
Given two functions 
$f(x) =  \sum_{j=1}^m a_j \kappa(x, x_j)$
and $f'(x) = \sum_{j=1}^{m'} a'_j \kappa(x, x'_j)$ in $\widetilde{\mK}$,
define the inner product $\langle f, f' \rangle_{\widetilde{\mK}} 
\coloneqq \sum_{j=1}^m \sum_{k=1}^{m'} a_j a'_k \kappa(x_j, x'_k)$. 
The RKHS  $\mK$ of the kernel $\kappa$ is  defined as the completion of $\widetilde{\mK}$ with respect 
to the RKHS norm induced by the above inner product. It is a Hilbert space with inner product 
denoted by $\langle \cdot, \cdot \rangle_\mK$ and the associated norm. We note here that many standard Hilbert spaces are in fact RKHSs, e.g., the Sobolev space $H^s(\Omega)$ 
for a smooth domain $\Omega \subseteq \R^d$ and $s > d/2$.

The space $\mK$ has two important 
properties:  (i) $\kappa(x, \cdot) \in \mK$
for all $x \in \Omega$; and (ii)
(the reproducing property) $f(x) = \langle f, \kappa(x, \cdot) \rangle_{\mK}$ for all $f\in \mK$.
A map $\psi: \Omega \to \mK$ is called a {\it feature map} for $\kappa$ if 
$\kappa(x,y) = \langle \psi(x),  \psi(y) \rangle_\mK$ for every $x, y \in \Omega$. Such a map $\psi$ always exists since 
we can simply take $\psi = \kappa(x, \cdot)$, the {\it canonical} feature map.

We further define the kernel mean embedding of probability 
measures $\mu \in \PP(\Omega)$ with respect to $\kappa$ as 
    $\mu_\kappa \coloneqq  \int_\Omega \kappa(x, \cdot) \mu(\dd x) \in \mK $ along with the  subspace 
\begin{equation*}
    \PP_\kappa(\Omega) \coloneqq \left\{ \mu \in \PP(\Omega) : 
    \int_\Omega \sqrt{\kappa(x,x)} \mu( \dd x) < + \infty \right\} \, .
\end{equation*}
We finally define the MMD  between probability measures $\mu, \nu \in \PP(\Omega)$ with respect 
to  $\kappa$ as 
\begin{equation*}
    \MMD_\kappa(\mu, \nu) \coloneqq \left\{ 
    \begin{aligned}
        & \| \mu_\kappa - \nu_\kappa \|_{\mK},  && \text{if } \mu, \nu \in \PP_\kappa(\Omega) , \\ 
        & +\infty, && \text{otherwise} \, .
    \end{aligned}
    \right.
\end{equation*}
\begin{theorem}\label{thm:MMD-stability}
Let $\Omega \subseteq \R^d$ and $\Omega' \subseteq \R^s$ be Borel sets and 
fix $\mu \in \PP(\Omega)$ and a Mercer kernel $\kappa: \Omega' \times \Omega' \to \R$ with RKHS $\mK$.
Suppose $\kappa$ has a feature map $\psi: \Omega' \to \mK$  and there exists a function 
$L: \Omega' \times \Omega' \to \R_+$ so that 
\begin{equation*}
    \| \psi(x) - \psi(y) \|_\mK \le L(x, y) | x- y|.
\end{equation*}
Suppose $F, G \in L^q_\mu(\Omega; \Omega')$
such that $L( F(\cdot), G(\cdot) ) \in L^p_\mu(\Omega; \R)$ 
for H{\"o}lder exponents $p,q \in [1, \infty]$ satisfying $1/p + 1/q =1$.
Then it holds that 
\begin{equation*}
    \MMD_\kappa(F_\sharp \mu, G_\sharp \mu) \le \| L(F(\cdot), G(\cdot)) \|_{L^p_\mu(\Omega; \R)} 
    \| F - G \|_{L^q_\mu(\Omega; \Omega')}.
\end{equation*}
\end{theorem}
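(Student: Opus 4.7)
The plan is to reduce the MMD to a Bochner integral expression in $\mK$ involving the feature map $\psi$ composed with $F$ and $G$, and then estimate it directly using the hypotheses and H\"older's inequality.

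First I would observe that for any feature map $\psi$ and any $\nu_1,\nu_2 \in \PP_\kappa(\Omega')$ with finite second moments under $\kappa$, one has
\begin{equation*}
\MMD_\kappa(\nu_1,\nu_2)^2 = \iint \kappa\, d(\nu_1\otimes\nu_1) - 2\iint \kappa\, d(\nu_1\otimes\nu_2) + \iint\kappa\, d(\nu_2\otimes\nu_2),
\end{equation*}
and substituting $\kappa(x,y) = \langle \psi(x),\psi(y)\rangle_\mK$ shows that this quantity equals $\bigl\|\int \psi\, d\nu_1 - \int\psi\, d\nu_2\bigr\|_\mK^2$, where the integrals are Bochner integrals in $\mK$ (well-defined because $\|\psi(x)\|_\mK = \sqrt{\kappa(x,x)}$ and $\nu_1,\nu_2 \in \PP_\kappa$). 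Applying this with $\nu_1 = F_\sharp \mu$ and $\nu_2 = G_\sharp\mu$ and using the change-of-variables formula for pushforwards, I obtain
\begin{equation*}
\MMD_\kappa(F_\sharp\mu,G_\sharp\mu) = \left\| \int_\Omega \bigl[\psi(F(t)) - \psi(G(t))\bigr]\, d\mu(t)\right\|_\mK.
\end{equation*}

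Next I would invoke the triangle inequality for Bochner integrals (i.e., $\|\int f\, d\mu\|_\mK \leq \int \|f\|_\mK\, d\mu$) and then the assumed pointwise Lipschitz-type bound on $\psi$ to get
\begin{equation*}
\MMD_\kappa(F_\sharp\mu,G_\sharp\mu) \leq \int_\Omega \|\psi(F(t))-\psi(G(t))\|_\mK\, d\mu(t) \leq \int_\Omega L(F(t),G(t))\, |F(t)-G(t)|\, d\mu(t).
\end{equation*}
A single application of H\"older's inequality with the conjugate exponents $p,q$ then yields
\begin{equation*}
\MMD_\kappa(F_\sharp\mu,G_\sharp\mu) \leq \|L(F(\cdot),G(\cdot))\|_{L^p_\mu(\Omega;\R)}\, \|F-G\|_{L^q_\mu(\Omega;\Omega')},
\end{equation*}
which is the desired inequality. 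As a by-product, the finiteness of the right-hand side also confirms that $F_\sharp\mu, G_\sharp\mu \in \PP_\kappa(\Omega')$ when combined with the assumption that $F,G \in L^q_\mu$, so that the MMD is indeed finite and the above manipulations are justified.

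The main subtlety I expect is the initial identity expressing the MMD as a norm of a difference of Bochner integrals of a \emph{general} feature map $\psi$ rather than the canonical one $\psi_0(x) = \kappa(x,\cdot)$; once one notes that the definition of $\MMD_\kappa$ depends only on the pairwise inner products $\kappa(x,y) = \langle \psi(x),\psi(y)\rangle_\mK$ and that both feature maps induce the same Gram structure, the equivalence is immediate. A secondary technical point is verifying Bochner integrability of $\psi\circ F$ and $\psi\circ G$ with respect to $\mu$, which follows from measurability of $F,G$ together with $\|\psi(x)\|_\mK = \sqrt{\kappa(x,x)}$ and the integrability conditions placed on $F$ and $G$.
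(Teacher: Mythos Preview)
Your proposal is correct and follows essentially the same route as the paper: express the MMD as the $\mK$-norm of a difference of Bochner integrals of the feature map, apply the norm inequality for Bochner integrals, invoke the pointwise Lipschitz-type bound on $\psi$, and finish with H\"older. The only difference is that the paper starts directly from the kernel mean embedding definition (using $\kappa(\cdot,x)$) and silently passes to a general feature map via $\|\kappa(x,\cdot)-\kappa(y,\cdot)\|_\mK = \|\psi(x)-\psi(y)\|_\mK$, whereas you derive the Bochner-integral identity from the double-integral expansion of $\MMD_\kappa^2$ and explicitly address why any feature map works and why the Bochner integrals are well-defined; these are details the paper omits rather than genuine methodological differences.
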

\begin{proof}
By the definition of MMD we have that 
\begin{equation*}
\begin{aligned}
      \MMD_\kappa( F_\sharp \mu, G_\sharp \mu ) 
    & = \left\| \int_{\Omega} \kappa( F(x), \cdot) \mu(\dd x) - \int \kappa(G(x), \cdot) \mu(\dd x) \right\|_\mK, \\
    & = \left\| \int_\Omega \kappa( F(x), \cdot)  - \kappa(G(x), \cdot) \mu(\dd x) \right\|_\mK, \\ 
    & \le  \int_\Omega \| \kappa( F(x), \cdot)  - \kappa(G(x), \cdot) \|_\mK \mu(\dd x).
\end{aligned} 
\end{equation*}
By the hypothesis of the theorem and H\"older's inequality we can further write 
\begin{equation*}
    \begin{aligned}
          \MMD_\kappa( F_\sharp \mu, G_\sharp \mu )  
          & \le  \int_\Omega \| \psi( F(x))  -  \psi(G(x)) \|_\mK \mu(\dd x)  \\
          & \le  \int_\Omega L(F(x), G(x)) | F(x) - G(x)|     \mu(\dd x)  \\
          & \le  \| L(F(\cdot), G(\cdot) ) \|_{L^p_\mu(\Omega; \R)}    \| F - G \|_{L^q_\mu(\Omega; \Omega')},
    \end{aligned}
\end{equation*}
    for H\"older exponents $p,q \in [1, +\infty]$.
\end{proof}

\begin{remark}
    We emphasize that in general, MMD is not a proper divergence, since for certain choices 
of $\kappa$ one can have
$\MMD_\kappa(\mu, \nu) =0$ even when $\mu \neq \nu$. However, if $\kappa$ is a  \emph{characteristic kernel}, then $\MMD_\kappa$ is a 
proper divergence; see precise statements and definitions in \cite[Sec.~3.3.1]{muandet2017kernel}. Many standard kernels are characteristic, e.g., the Gaussian kernel $\kappa(x,y) = \exp(- \gamma^2| x -y |^2)$) for any $\gamma^2 >0$.
Even so, Theorem \ref{thm:MMD-stability} holds regardless of whether $\kappa$ is characteristic and whether ${\rm MMD}_{\kappa}$ is a divergence.
\end{remark}
\begin{remark}[Applicability of the hypotheses]\label{rem:mmd_hyp}
We note that our stability result for MMD is also fairly simple and general, 
although it has more technical assumptions compared to the Wasserstein case. 
However, the technical assumptions only concern the choice of the kernel $\kappa$
and, in particular, the local Lipschitz property of its feature maps. Indeed, these conditions are 
easily verified for many common kernels used in practice (see also the proof of Proposition~\ref{prop:application-compact-rates}):
Simply taking $\psi(x) = \kappa(x, \cdot)$ we can use the reproducing property to write 
\begin{equation}\label{MMD-kernel-expansion}
\begin{aligned}
  \| \psi(x) - \psi(y) \|^2_\mK & = \langle \kappa(x, \cdot) - \kappa(y, \cdot), \kappa(x, \cdot) - \kappa(y, \cdot)  ) \rangle_\mK  \\     
  & = \kappa(x, x) + \kappa(y, y) - 2 \kappa(x, y).
\end{aligned}
\end{equation}
Now suppose the kernel has the form $\kappa(x,y) = h(|x - y|)$ (such kernels are often referred to as 
\emph{stationary}), then the
the condition of Theorem~\ref{thm:MMD-stability} 
simplifies to 
\begin{equation*}
    h(0) - h(|x-y|) \le \frac{1}{2} L^2(|x- y|) | x - y|^2. 
\end{equation*}
Thus the function $L$ is dependent on the regularity of $h$. In the case of the 
Gaussian kernel $h(t) = \exp( - \gamma^2 t^2)$ it  follows from the mean value theorem
that $L(\gamma) >0$ is simply a constant. 
\end{remark}

\subsection{KL divergence}\label{subsec:KL-error-bounds}
\rev{
For our final choice of the divergence $D$ we consider the KL divergence. 
Recall that for two probability measures $\mu, \mu' \in \PP(\Omega)$ the KL divergence is defined as 
${\rm KL}(\mu \| \mu') 
 \coloneqq \int_{\Omega} \log\left( \frac{d\mu}{d\mu'} \right) d\mu \,,$ 
where $d\mu/d\mu'$ is the Radon-Nikodym derivative of $\mu$ with respect to $\mu'$. In this section, we will only concern ourselves with absolutely continuous measures with densities $p_{\mu}, p_{\mu'}\in L^1 (\Omega)$, in which case the KL divergence reads as
\begin{equation}\label{eq:KL_def}
{\rm KL}(\mu \| \mu') 
 \coloneqq \int\limits_{\Omega} \log\left( \frac{p_\mu (x)}{p_{\mu'} (x) } \right) \, p_{\mu}(\rev{x}) \, d\rev{x} \, .
\end{equation}

\subsubsection{Background - the KL minimization problem}
By definition, the KL divergence is not symmetric, and hence it is not surprising that measuring
the divergence \rev{from $\mu$ to $\mu'$} or vice versa has a profound impact on our analysis. Furthermore, minimizing the KL divergence over its first or second argument to approximate the other measure in a limited family of distributions results in different behavior. In particular, minimizing ${\rm KL}(\mu||\mu')$ over $\mu$ (so-called reverse KL minimization) encourages $p_{\mu}$ and $p_{\mu'}$ to be close in high-probability regions of $\mu'$. For multivariate Gaussian $p_{\mu}$, this results in \emph{mode-seeking} approximations that fit one mode of $\mu'$. On the other hand, minimizing ${\rm KL}(\mu||\mu')$ over $\mu'$ (so-called forward KL minimization) encourages $p_{\mu}$ and $p_{\mu'}$ to be close over the entire support of $\mu$. For Gaussian $p_{\mu'}$, this results in \emph{mean-seeking} approximations that match the first two moments of $\mu$. We refer to~\cite{wainwright2008graphical} for a more in depth discussion of these two optimization problems. While reverse KL minimization is used in variational inference to approximate a target measure whose density is known (possibly up to the normalizing constant)~\cite{blei2017variational, rezende2015variational}, forward KL minimization (i.e., maximum likelihood estimation) is commonly used when a target measure is only prescribed using samples~\cite{bishop2006pattern, papamakarios2021normalizing}. Given that transport-based approximations are used in both settings, we present applications to minimizing both directions of the KL divergence in the next section.

Moreover, since  
\eqref{eq:KL_def} involves the Lebesgue densities of the measures, the direction of transport is also 
of great importance. To be more precise, suppose $T$ is invertible and take $\mu = T_\sharp \eta$ 
for a reference measure $\eta$ with density $p_\eta$, then by the change of variables formula 
we have that 
}
\begin{equation} \label{eq:pushforward_density}
\rev{ p_\mu = p_{T_\sharp \eta}(x) = } p_{\eta}(T^{-1}(x))|J_{T^{-1}}(x)|.
\end{equation}
Therefore, when we are dealing with pushforward measures (forward transport), controlling KL
will involve properties of the inverse map $T^{-1}$. This is also the case in forward uncertainty quantification problems, where usually the map is from $\R ^d$ to $\R$, and the more general co-area formula is invoked \cite{ditkowski2020density, evans2018measure}.

To alleviate some of the difficulties involved with using the inverse Jacobian $J_{T^{-1}}=J_T ^{-1}$, consider the inverse map $T^{-1}$ and define the notation 
\rev{
$T^\sharp \eta \coloneqq (T^{-1})_\sharp \eta$; 
we refer to this measure as the \emph{pullback} of $\eta$ by $T$.
The change of variables formula now gives, for the measure $\mu = T^\sharp \eta$, 
\begin{equation} \label{eq:pullback_density}
p_\mu = p_{T^\sharp \eta}(x) = p_{\eta}(T(x))|J_T(x)|.
\end{equation}}
Already here we see that, when dealing with the pullback, one only needs to study the 
Jacobian $J_T$ rather than its inverse. 

{\em This seemingly technical point will have a major impact on our analyses below.} Working with the pullback rather than the pushforward
is not just technically convenient, but it is also of great practical interest  in density estimation \cite{tabak2013family}: if $\nu = T^\sharp \eta$ for a reference $\eta$, then $p_\nu$ is given by formula 
\eqref{eq:pullback_density}. 
Therefore, seeking the inverse map directly in the \emph{backward transport} problem provides a direct estimate of the target measure.
The map $T$ is 
then approximated by inverting $T^{-1}$, for example, using bisection or by means of other numerical algorithms; 
see Section~\ref{subsec:pullback_KR} for more details.

In what follows 
we present our stability analysis of the KL divergence for both forward and backward transport problems. 
The forward transport problem is more challenging to analyze and requires more stringent assumptions
while our backward transport analysis is easier and leads to broader assumptions on the 
measures and maps. Since the proofs themselves are long and somewhat technical, we present them separately, in Section~\ref{app:stability-proofs}.


\subsubsection{Forward transport}\label{subsubsec:forward-KL}

We present two stability results: 
Theorem~\ref{thm:kl_compact} applies to compact domains $\Omega$ with general references $\eta$, whereas
Theorem~\ref{thm:kl} takes 
$\Omega= \R^d$ and fixes the reference $\eta$ to be the standard Gaussian. The reason for presenting these separate results is 
technical issues in proving stability of KL in the unbounded setting 
leading to more stringent assumptions on the tail behavior of the maps or the associated densities.  On compact domains, we can establish stability for a wide
choice of reference measures $\eta$ and with more relaxed assumptions 
on the maps. The proof of the following theorem is  presented in 
Section~\ref{sec:compact_kl_pf}.

\begin{theorem}\label{thm:kl_compact}
Let $\eta$ be an absolutely continuous probability measure with density $p_{\eta}$ on a 
compact
domain $\Omega \subset \mathbb{R} ^d$ and $F, G:\Omega\to \Omega$ be measurable.
Suppose that: 
\begin{enumerate}[label=(A\arabic*)]
\item $F, G\in C^2(\Omega; \Omega)$ \label{as:c2_compact}
\item $F,G$ are injective and $G$ is invertible on $F(\Omega)$. \label{as:bijective_compact}
\item $c_{\eta}  \coloneqq \min_{x\in \Omega} p_{\eta} (x)  > 0$.\label{as:eta_pos_compact}
\item $p_{\eta}$ has a Lipschitz constant $L_{\eta}\geq 0$.\label{as:eta_Lip_compact}
\end{enumerate}
Then it holds that
\begin{equation}\label{eq:kl_H1_compact}
\KL(F_\sharp \eta \: \| \: G_\sharp \eta) \leq C \|F-G\|_{W_\eta^{1,1}(\Omega; \Omega)} \, ,
\end{equation}
where $C>0$ depends only on $d, c_{\eta},L_{\eta}$, the smallest singular value of $J_G$, and $\|G\|_{C^2(\Omega; \Omega)}$; see \eqref{eq:KL_compact_fullbound} for details.
 \end{theorem}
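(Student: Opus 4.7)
The plan is to first rewrite $\KL(F_\sharp \eta \| G_\sharp \eta)$ as an integral over $\Omega$ against $\eta$ via the substitution $x = F(y)$ combined with the change-of-variables formula \eqref{eq:pushforward_density} applied to both $p_{F_\sharp \eta}(F(y))$ and $p_{G_\sharp \eta}(F(y))$. Under (A1)--(A2) the composition $H := G^{-1} \circ F$ is a well-defined $C^2$ self-map of $\Omega$, and using the chain-rule identity $\det J_F = \det J_G(H)\cdot \det J_H$ a direct computation yields
\begin{equation*}
\KL(F_\sharp \eta \| G_\sharp \eta) \;=\; \int_\Omega \left[ \log \frac{p_\eta(y)}{p_\eta(H(y))} \;-\; \log|\det J_H(y)| \right] p_\eta(y) \, \dd y.
\end{equation*}
The integrand vanishes identically when $F=G$ (so that $H = \mathrm{id}$), and the task becomes to show it is controlled by $\|F-G\|_{W^{1,1}_\eta}$.

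I split the integrand as $A(y)+B(y)$ with $A(y) = \log p_\eta(y) - \log p_\eta(H(y))$ and $B(y) = -\log|\det J_H(y)|$, and bound each separately. For $A$, assumptions (A3)--(A4) give the pointwise estimate $|A(y)| \le (L_\eta/c_\eta)\,|H(y)-y|$, and writing $H(y)-y = G^{-1}(F(y)) - G^{-1}(G(y))$ the Lipschitz constant of $G^{-1}$---bounded by $1/\sigma_{\min}(J_G)$---yields $|H(y)-y| \le \sigma_{\min}(J_G)^{-1}|F(y)-G(y)|$. Integrating against $p_\eta$ produces a $\|F-G\|_{L^1_\eta}$ contribution.

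For $B$, the identity $J_H = J_G(H)^{-1} J_F$ lets me decompose
\begin{equation*}
B(y) = \bigl[\log|\det J_G(H(y))| - \log|\det J_G(y)|\bigr] + \bigl[\log|\det J_G(y)| - \log|\det J_F(y)|\bigr].
\end{equation*}
The first bracket is handled exactly as $A$, using that $y \mapsto \log|\det J_G(y)|$ is Lipschitz with constant depending on $\|G\|_{C^2}$ and $\sigma_{\min}(J_G)$, producing another $\|F-G\|_{L^1_\eta}$ contribution. The second bracket is bounded, via the local Lipschitz property of $\log|\det(\cdot)|$ on matrices whose singular values stay bounded below, by a constant times $|J_F(y)-J_G(y)|$, which integrates to $\|J_F-J_G\|_{L^1_\eta}$. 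Summing the three $L^1_\eta$ estimates yields $\KL \le C\,\|F-G\|_{W^{1,1}_\eta}$.

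The main obstacle is the second bracket in the decomposition of $B$: $\log|\det|$ is not globally Lipschitz on $\mathrm{GL}_d(\R)$, since it blows up as $\det J_F \to 0$, and no lower bound on $\det J_F$ is assumed in the hypotheses. I expect the detailed bound \eqref{eq:KL_compact_fullbound} to dispatch this either by splitting $\Omega$ according to whether $|J_F(y) - J_G(y)|$ is small relative to $\sigma_{\min}(J_G)$---in the ``small'' region $J_F$ is comparable to $J_G$ and $\log|\det|$ is effectively Lipschitz, while in the ``large'' region the RHS $\|F-G\|_{W^{1,1}_\eta}$ already dominates any crude bound---or by the mean-value representation $\log\det A - \log\det B = \int_0^1 \mathrm{tr}((tA+(1-t)B)^{-1}(A-B))\,\dd t$ taken along paths that remain non-degenerate near $J_G$. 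This is also the step that accounts for the dependence of $C$ on $\|G\|_{C^2}$ and $\sigma_{\min}(J_G)$ rather than on properties of $F$.
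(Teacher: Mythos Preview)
Your approach is essentially the paper's: the same change of variables, the same estimate $|H(y)-y|\le \sigma_{\min}(J_G)^{-1}|F(y)-G(y)|$ for the $A$-term, and the same add-and-subtract of $\log|\det J_G(y)|$ (equivalently $|\det J_G(y)|$) to split $B$. The one point of divergence is the second bracket. The paper does not use your mean-value integral or domain splitting; instead it applies the Lipschitz property of $\log$ on $[c,\infty)$ \emph{before} splitting, turning the whole of \eqref{eq:kl_int2z} into $c^{-1}\bigl||\det J_G(Q)|-|\det J_F|\bigr|$, and then bounds the raw determinant difference $||\det J_G|-|\det J_F||$ via the Ipsen--Rehman inequality $|\det A-\det B|\le d\,|A-B|_{\ell^2}\max\{|A|_{\ell^2},|B|_{\ell^2}\}^{d-1}$, giving a constant times $|J_F-J_G|$. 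This is cleaner than either of your two proposed routes.

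Your concern about the lack of a lower bound on $|\det J_F|$ is well placed, and the paper does not actually dispatch it in the way you anticipate. The passage from \eqref{eq:kl_int2z} to $c^{-1}[\mathrm{I}+\mathrm{II}]$ is inherited verbatim from the unbounded proof, where such a lower bound is an explicit hypothesis (B3); in the compact case no such hypothesis appears among (A1)--(A4), and injectivity of a $C^2$ map on a compact set does not by itself force $\det J_F$ away from zero. Correspondingly, the explicit constant in \eqref{eq:KL_compact_fullbound} contains $\max\{\|F\|_{C^1},\|G\|_{C^1}\}^{d-1}$, so the claim that $C$ depends only on $d,c_\eta,L_\eta,\sigma_{\min}(J_G),\|G\|_{C^2}$ is not quite what the proof delivers. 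In short: your skeleton is right and matches the paper; your identified obstacle is genuine; the paper's resolution is the Ipsen--Rehman inequality, but it still tacitly leans on quantities involving $F$ that the theorem statement does not list.
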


We now turn our attention to the case of unbounded domains with $\eta$ taken to be the standard Gaussian 
measure.  The proof of the following theorem is  presented in Section~\ref{sec:proof-of-KL-thm}.

\begin{theorem}\label{thm:kl}
Let $\eta$ be the standard Gaussian measure on $\R ^d$ and
 $F, G\colon\R^d \to \R^d$ be measurable maps.
Suppose the following conditions hold: 
\begin{enumerate}[label=(B\arabic*)]
\item $F,G \in H_\eta^2(\R^d;\R^d)$ and are $C^2$  $\eta$-a.e. 
\label{as:c1}
\item $G(\R^d) \subseteq F(\R^d)$.\label{as:imagefg}
\item There exists a constant $c> 0$ so that
$|\det (J_F)|, |\det (J_G)|\geq  c>0 $~~ $\eta$-a.e.\ in  $\R^d$. \label{as:det_fg}
\item There exists a constant $c_G > 0$ so that 
the smallest singular value of $J_G $ is bounded from below by $c_G$  $\eta$-a.e. \label{as:loweval_Jg}
\item $|F_i(x)|,|G_i(x)|, |\partial _{x_j} G_i(x) |, |\partial _{x_j, x_{\ell}} G_i(x)| $ grow at most polynomially in $x$ for every $1\leq i,j, \ell \leq d $. \label{as:tails}
\end{enumerate}
Then it holds that 
\begin{equation}\label{eq:kl_H1}
\KL( F_\sharp \eta \| G_\sharp \eta ) \leq C \|F- G\|_{H_\eta^1(\R ^d ;\R^d)} \, ,
\end{equation}
where the constant $C >0$ depends only on $d, c, c_G, \|\nabla {\rm det}J_g\|_{L^2}, \|F\|_{W^{1,2(d-1)}},$ and $\|G\|_{W^{1,2(d-1)}}$.
 \end{theorem}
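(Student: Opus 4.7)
The plan is to reduce $\KL(F_\sharp\eta\,\|\,G_\sharp\eta)$ to an expression in the single map $H := G^{-1}\circ F$ and then Taylor-expand around $H = \operatorname{id}$. Under assumptions \ref{as:c1}, \ref{as:imagefg}, \ref{as:det_fg}, and \ref{as:loweval_Jg}, the map $H$ is a well-defined $C^2$ injection of $\R^d$ into itself. Combining the change-of-variables formula for the pushforward densities with the chain rule $J_F(y) = J_G(H(y))\,J_H(y)$ and the Gaussian identity $\log(p_\eta(y)/p_\eta(H(y))) = (|H(y)|^2-|y|^2)/2$ yields
\begin{equation*}
\KL(F_\sharp\eta\,\|\,G_\sharp\eta) \;=\; \int_{\R^d} \Big[\tfrac{|H(y)|^2-|y|^2}{2} - \log|\det J_H(y)|\Big]\,p_\eta(y)\,\dd y,
\end{equation*}
which coincides with $\KL(H_\sharp\eta\,\|\,\eta)$ and vanishes identically when $H=\operatorname{id}$.

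Setting $\Delta := H - \operatorname{id}$ and $A := J_\Delta$, the expansion $|H|^2 - |y|^2 = 2\,y\cdot\Delta + |\Delta|^2$ produces a linear term in $\Delta$ which I would remove by Gaussian integration by parts (Stein's identity), justified through the polynomial-growth hypothesis \ref{as:tails} together with the $H^2_\eta$ regularity in \ref{as:c1}. This rewrites the integrand as $\tfrac{|\Delta|^2}{2} + r(A)$, where $r(A) := \operatorname{tr}(A) - \log|\det(I+A)|$ is nonnegative pointwise by $x - 1 - \log x \ge 0$ applied to the singular values of $I+A$. Expanding $\det(I+A) = 1 + \operatorname{tr}(A) + \sum_{k=2}^{d} e_k(A)$ in elementary symmetric polynomials with $|e_k(A)| \lesssim \|A\|_F^k$, and using the uniform lower bound $|\det(I+A)| = |\det J_H| \ge c' > 0$ that follows from \ref{as:det_fg} and \ref{as:loweval_Jg} to keep the argument of the logarithm away from zero, one derives a pointwise estimate of the form $r(A) \le C\,P(\|A\|_F)$ for a polynomial $P$ of degree at most $2(d-1)$.

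Integrating this bound against $p_\eta$ and invoking \ref{as:tails} together with $F,G\in W^{1,2(d-1)}_\eta$ yields $\KL \le C\bigl(\|\Delta\|_{L^2_\eta}^2 + \|A\|_{L^{2(d-1)}_\eta}^{2(d-1)}\bigr)$. The remaining task is to convert these norms of $\Delta$ and $A = J_H - I$ back into norms of $F-G$: assumption \ref{as:loweval_Jg} makes $G^{-1}$ globally $c_G^{-1}$-Lipschitz on $G(\R^d)$, so $|\Delta(y)|\le c_G^{-1}|F(y)-G(y)|$ by the mean value theorem, while writing $A = J_G(H)^{-1}\bigl(J_{F-G}(y) + (J_G(y)-J_G(H(y)))\bigr)$ and controlling the second difference by $\|\nabla J_G\|\cdot|\Delta|$ (finite thanks to $G\in H^2_\eta$) yields the Sobolev comparison $\|\Delta\|_{H^1_\eta}\lesssim \|F-G\|_{H^1_\eta}$. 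The resulting quadratic-plus-polynomial estimate is finally upgraded to the linear bound announced in \eqref{eq:kl_H1} by absorbing the a priori radius $\|F\|_{H^1_\eta}+\|G\|_{H^1_\eta}$ into the constant.

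The principal obstacle is the pointwise log-determinant bound on $r(A)$: because $\|A\|_{op}$ is not assumed small, the power series for $\log(I+A)$ is unavailable, and the argument must rely solely on the two-sided control of $|\det(I+A)|$ supplied by \ref{as:det_fg}-\ref{as:loweval_Jg}. Handling the resulting polynomial correction is precisely what forces the high-order integrability $F,G\in W^{1,2(d-1)}_\eta$ in the constant of \eqref{eq:kl_H1}, and its careful interaction with Stein's identity in the second step—where vanishing of boundary terms must be justified via \ref{as:tails} and the $H^2_\eta$ regularity in \ref{as:c1}—is the technically most delicate part of the proof.
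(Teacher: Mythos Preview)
Your reduction $\KL(F_\sharp\eta\,\|\,G_\sharp\eta)=\KL(H_\sharp\eta\,\|\,\eta)$ with $H=G^{-1}\circ F$ is correct and equivalent to the paper's starting point (their $Q$ is your $H$), and the Stein-identity rewrite of $\int p_\eta\,y\!\cdot\!\Delta$ as $\int p_\eta\,\operatorname{tr}(A)$ is a legitimate alternative to the paper's direct Cauchy--Schwarz bound on $\tfrac12(|Q|^2-|z|^2)=\tfrac12(Q-z)\cdot(Q+z)$. The gap is in your control of $r(A)=\operatorname{tr}(A)-\log|\det(I+A)|$: you assert that $|\det(I+A)|=|\det J_H|\ge c'>0$ follows from \ref{as:det_fg}--\ref{as:loweval_Jg}, but since $|\det J_H|=|\det J_F|/|\det J_G(H)|$ this would require an \emph{upper} bound on $|\det J_G|$, which is neither assumed nor implied---\ref{as:det_fg} and \ref{as:loweval_Jg} give only lower bounds, and under \ref{as:tails} the entries of $J_G$ may grow polynomially. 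Take $d=1$, $F(x)=x$, $G(x)=x+x^3$: every hypothesis \ref{as:c1}--\ref{as:tails} holds, yet $|\det J_H(x)|=1/(1+3H(x)^2)\to 0$ as $|x|\to\infty$, so $|A(x)|\to 1$ stays bounded while $r(A(x))=\log(1+3H(x)^2)-\tfrac{3H(x)^2}{1+3H(x)^2}\to+\infty$. No pointwise estimate $r(A)\le C\,P(\|A\|_F)$ with polynomial $P$ can survive this.

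The paper circumvents the obstruction by never isolating $|\det J_H|$. It keeps the logarithmic term as $\log|\det J_G(Q(z))|-\log|\det J_F(z)|$, whose two arguments both lie in $[c,\infty)$ by \ref{as:det_fg}, and applies the $c^{-1}$-Lipschitz bound for $\log$ on that ray. The resulting determinant difference is split, by adding and subtracting $|\det J_G(z)|$, into (i) $|\det J_G(Q)|-|\det J_G|$, handled by a mean-value estimate on $\nabla\det J_G$ combined with $|Q-z|\le c_G^{-1}|F-G|$, and (ii) $|\det J_G|-|\det J_F|$, handled by the Ipsen--Rehman determinant perturbation inequality---this is exactly where the $W^{1,2(d-1)}_\eta$ norms enter the constant. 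To repair your route you would have to undo the combination $r(A)$ and treat $-\log|\det J_H|=\log|\det J_G(H)|-\log|\det J_F|$ by the same Lipschitz trick; the leftover $\operatorname{tr}(A)$ produced by Stein then needs its own bound via $A=J_G(H)^{-1}(J_{F-G}+(J_G-J_G\circ H))$, which requires precisely the mean-value and determinant-perturbation ingredients the paper already uses, so the detour through Stein does not simplify matters.
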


Let us motivate the hypotheses in Theorems \ref{thm:kl_compact} and  \ref{thm:kl}
by giving an overview of their proofs. The starting point of both proofs is similar: we use the change of variables formula to express the ${\rm KL}$-integral as an integral with respect to the reference $\eta$. To do so, we need to ensure that both  maps are invertible and $C^1$, hence Assumptions \ref{as:c2_compact} and \ref{as:bijective_compact} in the compact case, and \ref{as:c1} and \ref{as:det_fg} in the unbounded case. As noted before, for the KL-integral to be finite, one has to avoid image mismatch between $F$ and $G$; hence assumptions \ref{as:bijective_compact} and \ref{as:imagefg}. Using the change of variables formula, we then separate the KL-integral into two parts (see e.g., \eqref{eq:Jg_i} and \eqref{eq:Jg_ii} in the proof): 
\begin{equation*}
{\rm KL}(F_{\sharp}\eta || G_{\sharp}\eta ) \leq C  \underbrace{\|J_G - J_G \circ (G^{-1} \circ F) \|_{L^2_{\eta}}}_{\rm I} + \underbrace{ \|J_G - J_F \|_{L^2_{\eta}}}_{\rm II}    \, .   
\end{equation*}
Term ${\rm II}$ is the more intuitive of the two: the PDFs of the pushforwards are related to the Jacobian of the maps via \eqref{eq:pushforward_density}, and so here we compare their integrated distance. Already here some technical difficulties of the unbounded case arise: local regularity is not enough to bound this term, and we need to explicitly require Sobolev regularity (Assumption \ref{as:c1}). Furthermore, $J_G$ and $J_F$ are determinants, whose entries are {\em products} of first derivatives. To bound these by the $H^1$ norm, we use a linear algebra lemma by Ipsen and Rehman \eqref{eq:ipsen}, which in turn requires us to ensure that $F, G \in W^{1,2(d-1)}$. We 
achieve this  using the tails hypothesis \ref{as:tails}.

To understand the intuition behind integral ${\rm I}$, first note that if $G^{-1}\circ F = {\rm Id}$, then term ${\rm I}=0$. Hence, this term  measures ``how far $G^{-1}$ is from $F^{-1}$.'' Here, in order to compare the inverses, we use Lagrange mean value theorem-type arguments in both settings (Lemma~\ref{lem:q_unbd}), but there is a major difficulty in the unbounded case: since invertibility on $\R^d$ does not guarantee that the determinants and singular values of $J_G, J_F$ do not go to $0$ as $|x|\to \infty$, we need to require that explicitly in the form of Assumptions \ref{as:det_fg} and \ref{as:loweval_Jg}. Moreover, we find that we need to also bound the $L^2$ norm of the second derivative of $G$, hence again the tails condition \ref{as:tails}. We comment that the second derivatives also appear in the case of maps from $\R ^d$ to $\R$ as a way to control the geometric behavior of level sets, see \cite{ditkowski2020density}, and that in principle they can be replaced by control over the Lipschitz (or even H{\"o}lder) constants of the first derivatives.

Finally, we comment that despite the relative freedom in choosing $\eta$ in the compact settings, there are still restrictions: Assumption \ref{as:eta_pos_compact} requires that $p_{\eta}$ is bounded from below on its support, and Assumption \ref{as:eta_Lip_compact} requires the reference to be Lipschitz, and in particular continuous on the compact domain $\Omega$, and hence bounded. The settings where $\eta$ is continuous on an {\em open} set but is unbounded either from above or from below (e.g., the Wigner semicircle distribution $p_{\eta}(y) \,  \propto \,  \sqrt{1-y^2}$) remain open, as they are not covered by our current analysis. 

\subsubsection{Backward transport}\label{subsubsec:bakcward-KL}

 We now turn our attention to the backward transport problem. 
 Our assumptions here are more relaxed in comparison to the forward transport problem. In particular, 
 we take  $\Omega = \R ^d$ and assume that $\nu$ has sub-Gaussian tails. 
The proof of the following theorem is presented in Section~\ref{app:proof_kl_pullback}. 
\begin{theorem} \label{thm:kl_pullback}
Let $\eta$ be the standard Gaussian measure on $\R^d$. Suppose the following assumptions hold: 
\begin{enumerate}[label=(C\arabic*)]
\item $F,G \in H_\eta^1(\R^d;\R^d)$ and $W_\eta^{1,2(d-1)}(\R^d;\R^d)$ \label{as:cont_pullback}
\item $F, G$ are invertible and $F^{-1}, G^{-1}$ are both Borel-measurable.\footnote{Since $\eta$ is absolutely continuous with respect to the Lebesgue measure, being $\eta$-measurable and Borel-measurable are equivalent.}
\item There exists a constant $c> 0$ so that
$|\det (J_F)|, |\det (J_G)|\geq  c>0 $  $\eta$-a.e.\ in  $\R^d$. \label{as:det_pullback}
\item There exists a constant $c_F < \infty$ so that $p_{F^\sharp \eta}(x) \leq c_F p_{\eta}(x)$ for all $x \in \R^d$.
\label{as:Gaussiantail_pullback}
\end{enumerate}
Then it holds that 
\begin{equation} \label{eq:KL_pullback}
    \KL(F^\sharp\eta||G^\sharp\eta) \leq C\|F - G\|_{H_\eta^1(\R ^d ;\R^d)},
\end{equation}
where the constant $C >0$ depends on $d, c, c_F, \|F + G\|_{L^2_\eta},\|F\|_{W^{1,2(d-1)}_\eta},$ and $\|G\|_{W^{1,2(d-1)}_\eta}$.
\end{theorem}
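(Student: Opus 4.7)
The plan is to exploit the pullback density formula $p_{T^\sharp \eta}(x) = p_\eta(T(x))|J_T(x)|$, which (as the paper emphasizes) makes the analysis much cleaner than in the pushforward case: one never needs to differentiate the inverse maps, and the densities can be compared directly in the reference variable. First I would substitute this formula into the definition of KL to write
\begin{equation*}
\KL(F^\sharp\eta \,\|\, G^\sharp\eta)
= \int_{\R^d} p_{F^\sharp\eta}(x)\bigl[\log p_\eta(F(x)) - \log p_\eta(G(x))\bigr]\,dx
+ \int_{\R^d} p_{F^\sharp\eta}(x)\log\tfrac{|J_F(x)|}{|J_G(x)|}\,dx
\; \eqqcolon\; I_1 + I_2 .
\end{equation*}
The two terms will be estimated by entirely different mechanisms, but both ultimately reduce to the $H^1_\eta$ norm of $F-G$ together with moment control of $F,G$ furnished by \ref{as:cont_pullback}--\ref{as:Gaussiantail_pullback}.

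For $I_1$, I would use the explicit Gaussian density: since $\log p_\eta(y)-\log p_\eta(z)=\tfrac12(|z|^2-|y|^2)$, the integrand becomes $\tfrac12(G(x)-F(x))\cdot(G(x)+F(x))$. Applying the sub-Gaussian tail hypothesis \ref{as:Gaussiantail_pullback} to replace $p_{F^\sharp\eta}$ by $c_F p_\eta$ and then Cauchy--Schwarz yields
\begin{equation*}
|I_1| \;\le\; \tfrac{c_F}{2}\,\|F-G\|_{L^2_\eta}\,\|F+G\|_{L^2_\eta},
\end{equation*}
which is already of the advertised form. For $I_2$, the lower bound \ref{as:det_pullback} gives $|J_F|,|J_G|\ge c$, so the mean value theorem applied to $\log$ produces $|\log|J_F|-\log|J_G||\le c^{-1}\bigl||J_F|-|J_G|\bigr|$; combining this with \ref{as:Gaussiantail_pullback} yields
\begin{equation*}
|I_2|\;\le\;\tfrac{c_F}{c}\,\bigl\|\,|J_F|-|J_G|\,\bigr\|_{L^1_\eta}.
\end{equation*}

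The remaining, and in my view main, obstacle is to bound $\||J_F|-|J_G|\|_{L^1_\eta}$ by $\|F-G\|_{H^1_\eta}$: the determinant is a degree-$d$ polynomial in the entries of the Jacobian, so a pointwise linearization is needed. Here I would invoke the Ipsen--Rehman type estimate already used by the authors in the proof of Theorem~\ref{thm:kl}, namely $\bigl||\det A|-|\det B|\bigr|\le d\,(\max(\|A\|,\|B\|))^{d-1}\|A-B\|$, applied with $A=J_F(x)$, $B=J_G(x)$. Splitting via H\"older's inequality with exponents $2$ and $2$ gives
\begin{equation*}
\bigl\|\,|J_F|-|J_G|\,\bigr\|_{L^1_\eta}
\;\lesssim\; \bigl(\|F\|_{W^{1,2(d-1)}_\eta}^{\,d-1}+\|G\|_{W^{1,2(d-1)}_\eta}^{\,d-1}\bigr)\,\|J_F-J_G\|_{L^2_\eta},
\end{equation*}
where the first factor is finite by \ref{as:cont_pullback} and the second is bounded by $\|F-G\|_{H^1_\eta}$. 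The exponent $2(d-1)$ in \ref{as:cont_pullback} is precisely what the Ipsen--Rehman estimate demands, making the hypothesis tight to the method. Collecting the bounds on $I_1$ and $I_2$ produces the desired inequality \eqref{eq:KL_pullback} with the claimed dependence of the constant on $d$, $c$, $c_F$, $\|F+G\|_{L^2_\eta}$, and the $W^{1,2(d-1)}_\eta$ norms of $F$ and $G$.
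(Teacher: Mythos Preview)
Your proposal is correct and follows essentially the same approach as the paper's own proof: the same two-term decomposition via the pullback density formula, the Gaussian $\log$-difference plus Cauchy--Schwarz for the first term, and the Lipschitz-of-$\log$ argument combined with the Ipsen--Rehman determinant estimate (the paper's Lemma~\ref{lem:determinant_bound}) and H\"older for the second. The only cosmetic difference is that the paper applies \ref{as:Gaussiantail_pullback} once up front before splitting, whereas you apply it inside each term after splitting; your version, by passing through absolute values, is if anything slightly more careful about signs.
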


We comment on the assumptions in Theorem \ref{thm:kl_pullback} and compare them to those in the pushforward analog, Theorem \ref{thm:kl}. 
For the pushforward, Assumption~\ref{as:tails} on the asymptotic polynomial growth of the map and its first derivatives implies that the $\eta$-weighted Sobolev norms are finite. Thus, Assumption~\ref{as:tails} is a sufficient condition for the map to lie in the function spaces prescribed in Assumption~\ref{as:cont_pullback}. On the other hand, Assumption~\ref{as:Gaussiantail_pullback} implies that the target distribution is sub-Gaussian~\cite{vershynin2018high}; see e.g., \cite[Remark 3]
{baptista2020representation}. This condition is easier to interpret and verify, compared to the asymptotic polynomial growth of the maps in Assumption~\ref{as:tails}, by using various equivalent conditions for sub-Gaussian distributions.

The proof of Theorem~\ref{thm:kl_pullback} follows closely that of Theorem~\eqref{thm:kl}, where the pushforward is not by the maps $F$ and $G$, but rather by their inverses. This simplifies matters considerably, since the KL divergence between the pullback densities now does not involve the Jacobian of the {\em inverse} maps. Thus, less stringent assumptions are needed on the maps $F, G$ in Theorem \ref{thm:kl_pullback}, e.g.,  there is no requirement for uniform control over the 
singular values of their Jacobians. Instead, we only need to control the closeness of the maps and their Jacobians in expectation over the target measure $F^\sharp \mu$. Under Assumption~\ref{as:Gaussiantail_pullback}, we can equivalently control the closeness of these terms in expectation over the reference measure $\mu$, thereby yielding an upper bound that depends on the difference between $F,G$ integrated over the reference measure.

\section{Applications}\label{sec:applications}
We are now ready to apply the abstract framework of Section \ref{sec:main-results} and the stability results of Section~\ref{sec:stability}  to a wide variety of concrete examples. Whenever we discuss convergence, we think of a parameterized sequence of spaces \rev{$\widehat \mmT^n$} indexed by some parameter $n$ \rev{that characterizes the complexity/capacity of our approximation class}, and then solve the optimization problem
\begin{equation}\label{eq:abst_opt_theta}
\hnu^n = \hT_\sharp \eta \, , \qquad \hT \in 
\rev{
\argmin\limits_{S \in \widehat \mmT^n} D(S_\sharp \eta, \nu) \, .
}
\end{equation}
In Section~\ref{sec:l2conv} we consider the convergence of $\hnu^n$ in Wasserstein distances
when the \rev{ $\widehat \mmT^n$ } form a dense subset of $L^p$. Quantitative convergence rates are presented in Section~\ref{sec:measures-with-regular-densities} under more stringent conditions including  when the target and reference have regular densities.

\subsection{$\W_p$ convergence for target measures with finite variance}\label{sec:l2conv}
Following Theorem \ref{lem:wplp} for the Wasserstein distance, our first application takes place in very general settings, where the {\em regularity} and {\em approximation} results are robust, but relatively weak.
\begin{proposition}\label{prop:densexn}
Let $\Omega \subseteq \R ^d$ be a Borel set. Suppose $\nu$ and $\eta$ both have finite variance and that 
$\eta$ gives no mass to any $(d-1)$-dimensional $C^2$ manifold in $\R ^d$. 
\rev{
Consider a countable collection of subspaces  
$\widehat \mmT^n, n \ge 1$ 
such that 
\begin{equation}\label{eq:density_hmtn}
   \widehat \mmT^n \subset \widehat \mmT^{n+1}, 
   \quad \text{and} \quad 
    \overline{\lim\limits_{n\to\infty} \bigcup_{j=1}^n \widehat \mmT^n} = L^2_\eta(\R^d; \R ^d) \, .
\end{equation} 
}
Define $\hnu^n$ by \eqref{eq:abst_opt_theta} with $D=\W_p$ with $p\in [1,2]$. 
Then $\lim_{n\to \infty} \W_p(\hnu^n, \nu) =  0 \, .$
\end{proposition}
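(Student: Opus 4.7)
The strategy is a direct application of Theorem~\ref{thm:abstract-error} with Banach space $\mathscr{B}=L^2_\eta(\R^d;\R^d)$, reference class $\mmT=L^2_\eta(\R^d;\R^d)$, and divergence $D=\W_p$, followed by a density argument that uses the nestedness of the chain $\{\hmT^n\}$.

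\textbf{Feasibility.} First I would invoke the Brenier--McCann theorem to produce a transport map $\TT\in L^2_\eta(\R^d;\R^d)$ with $\TT_\sharp\eta=\nu$. The hypothesis that $\eta$ gives no mass to $(d-1)$-dimensional $C^2$ manifolds (equivalently, to sets of Hausdorff dimension at most $d-1$) is precisely the standard nonconcentration assumption guaranteeing existence of a Monge solution to the quadratic optimal transport problem from $\eta$ to $\nu$. A change of variables then gives
\begin{equation*}
\|\TT\|_{L^2_\eta}^2 \;=\; \int_{\R^d}|\TT(x)|^2\,\dd\eta(x) \;=\; \int_{\R^d}|y|^2\,\dd\nu(y) \;<\;\infty,
\end{equation*}
by the finite-variance assumption on $\nu$, so indeed $\TT\in L^2_\eta(\R^d;\R^d)$.

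\textbf{Stability and density argument.} Theorem~\ref{lem:wplp} applied with $\mu=\eta$, $q=2$, and $p\in[1,2]$ yields
\begin{equation*}
\W_p(S_\sharp\eta,\TT_\sharp\eta)\;\leq\;\|S-\TT\|_{L^2_\eta} \qquad \text{for every } S\in L^2_\eta(\R^d;\R^d).
\end{equation*}
Fix $\eps>0$. By the density assumption \eqref{eq:density_hmtn} together with nestedness of the chain, there exist an index $N$ and a map $S_N\in\hmT^N\subseteq\hmT^n$ (for every $n\geq N$) with $\|S_N-\TT\|_{L^2_\eta}<\eps$. Using the defining property of $\hT$ as a minimizer over $\hmT^n$, I would then conclude
\begin{equation*}
\W_p(\hnu^n,\nu)\;=\;\W_p(\hT_\sharp\eta,\TT_\sharp\eta)\;\leq\;\W_p(S_{N\sharp}\eta,\TT_\sharp\eta)\;\leq\;\|S_N-\TT\|_{L^2_\eta}\;<\;\eps,
\end{equation*}
for every $n\geq N$. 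Since $\eps$ is arbitrary, $\W_p(\hnu^n,\nu)\to 0$.

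\textbf{Expected main obstacle.} The only nontrivial content is the feasibility step: identifying the correct version of Brenier--McCann applicable under the stated nonconcentration hypothesis on $\eta$ (rather than the more familiar absolute continuity with respect to Lebesgue measure) and verifying that the resulting map has finite second moment against $\eta$. Once $\TT\in L^2_\eta$ is secured, the rest is essentially soft: the stability estimate for $\W_p$ from Theorem~\ref{lem:wplp} is linear in the $L^2_\eta$-norm, and nestedness of $\{\hmT^n\}$ lets a single near-optimal element $S_N$ feed the bound for all $n\geq N$.
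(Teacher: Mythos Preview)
Your proposal is correct and follows essentially the same approach as the paper: both invoke Brenier's theorem (under the nonconcentration hypothesis) to obtain $\TT\in L^2_\eta$ satisfying $\TT_\sharp\eta=\nu$, then combine the Wasserstein stability estimate of Theorem~\ref{lem:wplp} with the density assumption~\eqref{eq:density_hmtn} and the optimality of $\hT$ to conclude. The only cosmetic difference is that the paper routes the last step through the abstract Theorem~\ref{thm:abstract-error} and then passes to the limit, whereas you unfold that argument directly via an $\eps$--$N$ formulation; the content is identical.
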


\begin{proof}
Let $\TT$ be the $\W_2$ OT map from $\eta$ to $\nu$,
which is known to exist following \cite[Thm.\ 1.22]{santambrogio2015optimal} (see also \cite{brenier1987decomposition}). Since $\nu = \TT_\sharp\eta$ and both measures have finite variance, $\TT\in L^2_\eta(\R ^d; \R ^d)$. Therefore, by the density condition \eqref{eq:density_hmtn}, 
\rev{ $\lim _{n\to \infty}{\rm dist}_{L^2_\eta}(\widehat \mmT^n, T^\dagger) =0$.}
Simultaneously, by Theorem \ref{lem:wplp} (stability of Wasserstein 
distances), and Theorem~\ref{thm:abstract-error} (our abstract framework),
we have for all $p\in [1,2]$ and all $n\in \mathbb{N}$ that
\rev{
$ \W_p(\hnu^n, \nu) \leq {\rm dist}_{L^2_{\eta}}(\widehat \mmT^n, \TT).$
}
Passing to the limit $n\to \infty$, the right hand side  vanishes  yielding the desired 
result.
\end{proof}
We emphasize once more that the fact that we choose $\TT$ to be the $\W_2$-OT map  is independent of the choice of $\W_p$ with $p\in [1,2]$  in \eqref{eq:abst_opt}, and any $L^2_\eta$ map $\TT$ with $\TT_{\sharp}\eta = \nu$ would have worked. 
Density properties such as \eqref{eq:density_hmtn} are known for many families of functions. For example, using standard density results for polynomials, splines, and neural networks in $L^2$, we immediately get 
the following corollary.
\begin{corollary}
Consider $\nu, \eta \in \PP(\Omega)$  with finite variances. Then, in the following cases, 
\rev{ $\hnu^n$ } as defined in \eqref{eq:abst_opt_theta} converges to $\nu$ in $\W_p$ for all $p\in [1,2]$:
\rev{
\begin{enumerate}
    \item $\Omega =[0,1]^d$ or $\R ^d$ and $\widehat \mmT^n$ are polynomials of degree $\leq n$. \label{conv_dense:pol}
    \item $\Omega = [0,1]^d$ and $\widehat \mmT^n$ are $m$-th degree tensor product splines of a fixed degree $m\geq 0$ with $n^d$ knots (a tensor-product grid). \label{conv_dense:spline}
    \item $\Omega = [0,1]^d$ and $\widehat \mmT^n$ are feed-forward neural networks with at least one layer, $n$ weights, ReLU activation functions.
    \item $\Omega = \R^d$ and $\widehat \mmT^n$ are feed-forward neural networks with at least four layers, $n$ weights, and ReLU activation functions.\label{conv_dense:nn} 
\end{enumerate}
}
\end{corollary}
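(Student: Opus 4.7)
The strategy is to apply Proposition \ref{prop:densexn} to each of the four cases. Granted sufficient regularity of $\eta$ (e.g., absolute continuity, which guarantees that $\eta$ assigns no mass to any $(d-1)$-dimensional $C^2$ submanifold), what remains is to verify that each sequence $\{\hmT^n\}$ is an increasing chain of subspaces of $L^2_\eta(\Omega; \Omega)$ whose union is dense. The nesting $\hmT^n \subset \hmT^{n+1}$ is immediate from the definitions, and membership in $L^2_\eta$ follows from compactness of $\Omega$ in cases (1)--(3) and from the finite variance of $\eta$ together with the polynomial-in-coordinates growth of ReLU-network outputs in case (4). Since $L^2_\eta(\Omega;\Omega)$ decomposes coordinate-wise, density reduces in each case to the scalar-valued statement.

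For case (1) on $[0,1]^d$, the Stone--Weierstrass theorem yields density of polynomials in $C([0,1]^d)$, which is itself dense in $L^2_\eta([0,1]^d)$ for any finite Borel measure $\eta$. For case (1) on $\R^d$, one needs a determinacy hypothesis on $\eta$ beyond finite variance---the log-normal distribution is the standard counterexample---so an implicit side condition such as Carleman's criterion or a sub-exponential tail bound is required; under any such condition, polynomial density in $L^2_\eta(\R^d)$ is classical. Case (2) is handled by the standard fact that $m$-th degree tensor-product splines on a uniform grid of spacing $1/n \to 0$ are dense in $C([0,1]^d)$ with the sup norm, and hence in $L^2_\eta([0,1]^d)$.

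For case (3), the universal approximation theorem for shallow ReLU networks gives density in $C([0,1]^d)$, and therefore in $L^2_\eta([0,1]^d)$. The main obstacle is case (4): density of ReLU networks in $L^2_\eta(\R^d; \R^d)$ for a measure $\eta$ with only finite variance on an unbounded domain. Here the requirement of at least four layers is natural: a few layers of depth are needed to construct a smooth cutoff supported on a large ball $B_R$, universal approximation is then performed inside $B_R$, and finally Chebyshev's inequality, $\eta(\R^d \setminus B_R) \le \mathrm{Var}(\eta)/R^2$, is used to control the tail contribution to the $L^2_\eta$ norm. This is precisely the setting of Theorem \ref{thm:nn_unbounded_approx} announced earlier in the paper, which I would invoke directly. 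With density verified in each case, Proposition \ref{prop:densexn} immediately yields $\W_p$ convergence of $\hnu^n$ to $\nu$ for every $p \in [1,2]$.
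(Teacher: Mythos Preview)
Your approach is essentially the same as the paper's: verify $L^2_\eta$-density of each approximation class (via standard references, and for case~(4) via Theorem~\ref{thm:nn_unbounded_approx}) and then invoke Proposition~\ref{prop:densexn}. Your extra care about the moment-determinacy hypothesis for polynomials on $\R^d$ is well-placed and in fact sharper than the paper, which simply cites the literature without flagging this subtlety.
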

\begin{proof}
Each statement follows from $L^2$ density results of the form of \eqref{eq:density_hmtn} for the corresponding approximation class 
existing in the literature. For polynomials (Statement~\ref{conv_dense:pol}) see  \cite{canuto1982approximation, ernst2012convergence, xiu2010numerical}. For splines (Statement~\ref{conv_dense:spline}), this is a consequence of the density of continuous functions in $L^2(\Omega)$, and the density of piecewise  constant functions in $ C^0(\Omega)$. For
neural networks~(Statement \ref{conv_dense:nn}) with \(\Omega = [0,1]^d\), we use the approximation of H{\"o}lder continuous functions by neural networks \cite{shen2020deep}, and the density of H{\"o}lder continuous functions in $L^2(\Omega)$. 
When \(\Omega = \R^d\),  we prove density of four-layer ReLU networks in \(L^p_\eta (\R^d)\) in  Theorem~\ref{thm:nn_unbounded_approx} below.
\end{proof}

\begin{remark*}
The result in this section can be generalized to \eqref{eq:abst_opt_theta} with $D=\W_p$ with {\em any} $p> 1$. The regularity component ($\TT \in L^p$) is then given for measures with finite $p$-moment; see \cite[Thm.\ 1.17]{santambrogio2015optimal} and \cite{gangbo1996geometry}.
\end{remark*}

The following theorem shows that four-layer feed-forward neural networks with the ReLU activation are dense in \(L^p_\eta(\R^d; \R^m)\). 
This result is of independent interest since typical universal approximation results for neural networks are stated over 
compact domains while our result is stated on all of $\R^d$. While similar results have been shown for operator learning, see  \cite{bhattacharya2021model, lanthaler2022error, kovachki2021neural},
to the best of our knowledge, this is the first result for standard neural networks. The proof is given in Section~\ref{sec:nn_unbounded_approx}.

\begin{theorem}
    \label{thm:nn_unbounded_approx}
    Let \(\eta \in \mathbb{P}(\R^d)\) and suppose \(F \in L^p_\eta (\R^d; \R^m)\) for any \(1 \leq p < \infty\). Then, for any \(\epsilon > 0\) there exists a number \(n = n(\epsilon) \in \mathbb{N}\) and a four-layer ReLU neural network \(\widehat{F} : \R^d \to \R^m\)
    with \(n\) parameters such that $\|F - \widehat{F}\|_{L^p_\eta} < \epsilon.$
\end{theorem}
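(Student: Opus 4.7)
The plan is to reduce Theorem~\ref{thm:nn_unbounded_approx} to a uniform approximation problem on a compact set, using density of compactly supported continuous functions in $L^p_\eta$ together with a spline-like construction of a compactly supported ReLU network.

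First, because $\eta$ is a Borel probability measure on $\mathbb{R}^d$ (hence Radon), $C_c(\mathbb{R}^d;\mathbb{R}^m)$ is dense in $L^p_\eta$ for $1 \le p < \infty$ by standard measure-theoretic regularity. I would therefore fix $\phi \in C_c(\mathbb{R}^d;\mathbb{R}^m)$ with $\|F-\phi\|_{L^p_\eta} < \epsilon/2$ and $\mathrm{supp}(\phi) \subset [-R,R]^d$ for some $R>0$. Since $\eta$ is a probability measure, an $L^\infty(\mathbb{R}^d)$ estimate automatically implies an $L^p_\eta$ estimate of the same size, so the remaining task is to produce a four-layer ReLU network $\widehat{F}$ with $\|\widehat{F}-\phi\|_{L^\infty(\mathbb{R}^d)} < \epsilon/2$. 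This reduction is the reason the statement insists on $\eta$ being a probability measure and not, say, an arbitrary finite Radon measure; the bound $\eta(\mathbb{R}^d)^{1/p} = 1$ is what keeps the passage from uniform to $L^p_\eta$ harmless.

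Next I would discretize: on a uniform grid $\{x_j\}_{j\in J} \subset [-R-h, R+h]^d$ of spacing $h > 0$, form
\begin{equation*}
\widehat{F}(x) \;=\; \sum_{j\in J} \phi(x_j)\, \psi_j(x),
\end{equation*}
where each $\psi_j$ is a compactly supported $d$-dimensional tent function centered at $x_j$ with support in the cube of side $2h$ around $x_j$, so that the $\{\psi_j\}$ form a partition of unity with nodes at the grid points. Uniform continuity of $\phi$ on $[-R,R]^d$ lets me choose $h$ small enough to guarantee $\|\phi-\widehat{F}\|_{L^\infty} < \epsilon/2$. The crucial observation is that since $\phi(x_j)=0$ for every grid point outside $\mathrm{supp}(\phi)$, the approximation $\widehat{F}$ automatically vanishes outside a slightly enlarged cube, so no tail control on $\eta$ is required.

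The main technical obstacle is realizing the tent functions $\psi_j$, and therefore $\widehat{F}$, by a ReLU network of depth four \emph{independent} of the grid size and the dimension. I would build them from the standard identities $|t|=\mathrm{ReLU}(t)+\mathrm{ReLU}(-t)$, $t=\mathrm{ReLU}(t)-\mathrm{ReLU}(-t)$, and $\max(a,b)=\tfrac{1}{2}(a+b)+\tfrac{1}{2}|a-b|$. Iterating pairwise maxes produces $\|x-x_j\|_\infty = \max_i |x_i - x_{j,i}|$, after which one additional ReLU yields $\psi_j(x) = \mathrm{ReLU}(1 - \|x-x_j\|_\infty/h)$. Because all the $\psi_j$ share the same hidden structure (only biases depend on $j$) and are combined via a single linear output layer, the total depth collapses to a fixed constant; the delicate part is a careful bookkeeping of the ReLU-arithmetic so that the whole pipeline — absolute values, iterated maxima over $d$ inputs, final tent, and linear combination — fits within four layers rather than growing with $d$ or $|J|$. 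Once this depth count is verified, the remainder of the argument (uniform continuity, density, and the trivial $L^\infty \to L^p_\eta$ passage) is routine.
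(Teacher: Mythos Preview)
Your reduction to approximating a compactly supported continuous $\phi$ in $L^\infty$ is fine, but the depth bookkeeping---which you correctly flag as the crux---does not close.  Computing $\|x-x_j\|_\infty=\max_{1\le i\le d}|x_i-x_{j,i}|$ by iterated pairwise maxima costs one ReLU layer per binary $\max$, so $\lceil\log_2 d\rceil$ layers on top of the layer producing the $|x_i-x_{j,i}|$ and the final layer for $\mathrm{ReLU}(1-\cdot/h)$.  This is known to be essentially sharp: a ReLU network realizing $\max(a_1,\dots,a_d)$ exactly needs depth growing like $\log d$ (cf.\ Arora--Basu--Mianjy--Mukherjee, ICLR 2018), so the tent $\psi_j$ cannot be built in a fixed number of layers independent of $d$, and your ``the total depth collapses to a fixed constant'' is simply false for this construction once $d\ge 3$.

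The paper avoids the problem by never computing a norm or a multi-way max.  It truncates $F$ to a bounded $F_M$, uses Lusin's theorem to find a compact $K$ on which $F_M$ is continuous with $\eta(\R^d\setminus K)$ tiny, approximates $F_M$ uniformly on $K$ by a single-hidden-layer network $G_1$ via Pinkus's universal approximation theorem, and then composes with a fixed three-layer ``output-clipping'' network $G_2:\R^m\to\R^m$ (from Lanthaler et al.) which is close to the identity on a ball of radius $2M$ and globally bounded by $3M$.  The composition $\widehat F=G_2\circ G_1$ is then four layers regardless of $d$; on $K$ it is close to $F_M$, and off $K$ both $\widehat F$ and $F_M$ are uniformly bounded, so the small measure of $\R^d\setminus K$ controls that contribution.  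If you want to salvage your compactly-supported strategy, the analogous move is to clip the \emph{input} coordinatewise to $[-R,R]^d$ (one ReLU layer, since $\mathrm{clip}(t)=\mathrm{ReLU}(t+R)-\mathrm{ReLU}(t-R)-R$), then apply a single-hidden-layer approximant of $\phi$; because $\phi$ vanishes near the boundary, the result is globally controlled.  But that is a different construction from the $\ell^\infty$-tent interpolant you wrote down.
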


\subsection{Measures with regular densities}\label{sec:measures-with-regular-densities}
As noted, Proposition \ref{prop:densexn} only guarantees convergence, but does not provide rates of convergence. Indeed, since we take $\mT =\mmT = L^2 (\Omega)$ (recall our notation from Section~\ref{sec:main-results}) we only know that the true pushforward map $\TT$ is in the space $\mmT$ in which the stability and approximation theory take place. It is therefore natural that  we can only get convergence. 
Below we turn our attention to those cases where $\mmT$ is a proper subspace of $\mT$, in the sense that we require  more regularity of $\TT$. This type of assumption will lead to provable
convergence rates. Once again we separate our results to cases where $\Omega$ is bounded or 
unbounded. This is due to the stringent technical assumptions required on our maps 
in order to bound the KL divergence following Theorems \ref{thm:kl_compact} and \ref{thm:kl}.

\subsubsection{Compact domains}\label{sec:bounded-domains}
Consider absolutely continuous measures $\eta, \nu$ on $\Omega =~[-1,1]^d$ with  densities $p_{\eta}$ and $p_{\nu}$, respectively. Choose $\TT$ to be the $\W_2$-optimal map from $\eta$ to $\nu$.
Then, by
\cite{caffarelli1992regularity} (see also \cite[Thm.~2.2]{colombo2017lipschitz}, \cite[Ch.~12]{villani-OT}, and \cite{figalli2017monge}), we know that for any $k\geq 1$ 
\begin{equation}\label{eq:locReg}
 p_{\eta}, p_{\nu} \in C^k(\Omega) \quad \text{and} \quad   p_{\eta},p_{\nu}>0 \quad \text{implies}   \quad \TT\in C^{k+1}(\Omega ; \Omega) \, .
 \end{equation}
 Since the domain is compact, we have that $\TT$ is in the weighted Sobolev space $H^{k+1}_{\eta}(\Omega ; \Omega )$. 
 This constitutes our regularity result.
 
We now take \rev{$\widehat \mmT ^n$} to be the space of polynomials of 
degree $n$.
 Let us briefly recall the construction of tensor-product polynomial and their approximation theory; for details see ~\cite{canuto1982approximation, xiu2010numerical}. For concreteness, we will construct these approximations using the Legendre polynomials \cite{szego1939orthogonal}. Let $\mbf{m} = (m_1, \dots, m_d)$ be a multi-index with non-negative integers $m_i$.
Recall the 
multi-dimensional Legendre polynomials of 
degree $\mbf{m}$ \cite{cheng2003hermite}, $ p_{\mbf{m}}(x) \coloneqq \prod_{i=1}^d  p_{m_i}(x_i),$ for all  
$ x \in \R^d$
where each coordinate of $p_{m_i}:[-1,1]\to \R^d$ is the univariate Legendre polynomial of degree $m_i$ for each $1\leq i\leq d$. Let \rev{$\widehat \mmT^{n}$} denote the space of mappings from $\R^d \to \R^d$ where each 
component of the map belongs to the span of polynomials of degree $|\mbf m|_1 = n$. Choosing the Legendre polynomials as a basis, it can be written as
\begin{equation}\label{eq:LegTm}
    \rev{ \widehat \mmT^n = } \left\{ T \in L^2_\eta(\R^d; \R^d) \;  \Big| \; 
    T_{i} (x) =  \sum_{|\mbf m|_1 \le n} c_{\mbf m}^i  p_{\mbf m}(x), \qquad i =1, \dots, d \right\} \, .
\end{equation}
We now recall a result from approximation theory that gives a rate of convergence
for polynomial projections on Sobolev-regular functions.\footnote{The result also holds for polynomial interpolants at Gauss quadrature points, the so-called spectral collocation methods, but we do not pursue this direction further here.}
 \begin{proposition}[Canuto and Quarteroni \cite{canuto1982approximation}]\label{prop:CanutoQuarteroni}
 Suppose $F\in H^t (\Omega)$, let \rev{$\widehat \mmT^n$} be defined as in \eqref{eq:LegTm}, and 
let \rev{$\pi _n F \in \widehat \mmT^n$} be the $L^2$ projection of a function $F\in L^2(\Omega;\Omega)$ onto \rev{$\widehat \mmT^n$.} Then, for any $1\leq s \leq t$ there exists a constant $C=C(s, t)>0$ such that
 \begin{equation}\label{eq:canuto}
     \|F- \pi_n F\|_{H^{s}(\Omega; \Omega)} \leq C n^{-e(s,t)} \|F\|_{H^{t}(\Omega; \Omega)} \, ,
    \quad \text{where} \quad 
     e(s, t) = t +\frac12 -2s \, . 
 \end{equation}

 \end{proposition}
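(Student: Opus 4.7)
The plan is to follow the standard spectral-approximation strategy for Legendre projections. First, I would exploit the tensor-product structure of $\hmT^n$ to reduce the claim to its one-dimensional analogue: since the basis $\{p_{\mbf m}\}_{|\mbf m|_1 \le n}$ factorises coordinate-wise, and both $\|\cdot\|_{H^s}$ and $\|\cdot\|_{H^t}$ admit equivalent tensor decompositions, the $L^2$ projection $\pi_n$ onto total-degree-$n$ polynomials is controlled by a composition of univariate projections up to constants depending only on $d$. It therefore suffices to prove that for any scalar $f \in H^t([-1,1])$ with truncation $\pi_n f = \sum_{k \le n} \hat f_k p_k$ one has $\|f - \pi_n f\|_{H^s([-1,1])} \le C n^{2s-t-1/2}\|f\|_{H^t([-1,1])}$.

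For the univariate estimate, the key analytical input is that the Legendre polynomials are eigenfunctions of the singular Sturm--Liouville operator $Lu = -((1-x^2)u')'$ with eigenvalues $\lambda_k = k(k+1)$. This makes $\pi_n$ a spectral truncation, and Parseval's identity gives immediately $\|f-\pi_n f\|_{L^2}^2 = \sum_{k>n}|\hat f_k|^2$. The Sobolev regularity of $f$ translates, via repeated integration by parts against powers of $L$ together with self-adjointness, into algebraic decay of $|\hat f_k|$, producing a baseline $L^2$-estimate. From there I would bootstrap to $H^s$ using the Markov-type inverse inequality $\|q\|_{H^s} \le C n^{2s}\|q\|_{L^2}$ for $q \in \mathcal{P}_n$, applied dyadically to the blocks $\pi_{2^{j+1}}f - \pi_{2^j}f$ so that the $n^{2s}$ losses are balanced against the improved $L^2$-decay of the high-frequency blocks, summing geometrically.

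The main obstacle is to reconcile the \emph{unweighted} Sobolev norms appearing in the statement with the \emph{weighted} Sobolev structure naturally attached to $L$, in which the boundary weight $(1-x^2)$ penalises boundary behaviour less than the interior. This mismatch is precisely what produces the non-standard $+1/2$ in the exponent $e(s,t)$, and it is sensitive to boundary traces of $f$ and its derivatives. Handling it cleanly requires identifying the intermediate spaces between $L^2$ and the domain of $L$ via real interpolation, and then showing that these coincide with the unweighted $H^t$ scale up to controllable boundary-trace corrections; this is the most delicate step and is where Canuto--Quarteroni's original analysis concentrates the bulk of the technical work.
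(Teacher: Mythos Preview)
The paper does not supply a proof of this proposition: it is quoted verbatim as a classical result of Canuto and Quarteroni \cite{canuto1982approximation} and used as a black box, so there is no ``paper's own proof'' to compare against. Your sketch is a faithful outline of the argument in the original reference: the Sturm--Liouville structure $Lp_k=k(k+1)p_k$, the $L^2$ decay of Legendre coefficients obtained by integrating by parts against powers of $L$, the bootstrap to $H^s$ via the Markov-type inverse inequality, and the identification of the boundary weight $(1-x^2)$ as the source of the anomalous $+1/2$ in $e(s,t)$ are exactly the ingredients Canuto--Quarteroni use.

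One point to tighten: your reduction to $d=1$ relies on $\pi_n$ being ``controlled by a composition of univariate projections,'' but $\hmT^n$ in \eqref{eq:LegTm} is the \emph{total-degree} space $|\mbf m|_1\le n$, and the $L^2$ projection onto it is \emph{not} a tensor product of one-dimensional truncations (that would give the hyperbolic-cross or the tensor-degree space $\max_i m_i\le n$ instead). The clean way around this is to observe that the tensor-degree-$\lfloor n/d\rfloor$ space is contained in $\hmT^n$, so by optimality of the orthogonal projection $\|F-\pi_n F\|_{L^2}\le \|F-\bigotimes_i \pi^{(i)}_{\lfloor n/d\rfloor}F\|_{L^2}$, and the right-hand side is now genuinely a tensor product to which the one-dimensional estimate applies coordinate-wise; the factor $d$ in the degree is absorbed into the constant $C(s,t,d)$. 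The $H^s$ bootstrap then proceeds on the full $\hmT^n$ as you describe. With this correction your plan is sound.
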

 
 Combining the above result with the stability results of Section \ref{sec:stability} 
and Theorem~\ref{thm:abstract-error} we can prove the following quantitative spectral error bounds 
for polynomial approximations of transport maps. The proof is presented in Section~\ref{proof:prop:application-compact-rates}.

\begin{proposition}\label{prop:application-compact-rates}
Let $\Omega = [-1,1]^d$ and consider $\eta, \nu \in \PP (\Omega)$ with strictly positive densities $p_{\eta},p_{\nu}\in C^k (\Omega)$ with $k \ge 1$. Let \rev{$\widehat \mmT^n$} denote the space of polynomials of degree $n$   defined in \eqref{eq:LegTm},
and let $\hnu^n$ be as in \eqref{eq:abst_opt_theta}. Then it holds that:
\begin{enumerate}
    \item If $D=\W_p$ for $p\in [1,2]$, then $\W_p (\hnu^n , \nu) \le C n^{-(k+\frac32) } \, .$
    \item If $D = \MMD_\kappa$ with $\kappa(x, y) = \exp( - \gamma^2 |x - y|^2)$ then 
    $\MMD_\kappa (\hnu^n, \nu) \le C  n^{-(k+\frac32) }$.
    \item If  $D={\rm KL}$, let $k > \frac52 +2\lfloor \frac{d}{2} \rfloor$, let $\eta$ be the uniform measure on $\Omega$, and choose\footnote{Since ${\rm KL}$ is not symmetric, we emphasize the {\em order} in which the ${\rm KL}$ divergence is taken in \eqref{eq:abst_opt_kl}, unlike the Wasserstein distance and MMD.}
    \begin{equation}\label{eq:abst_opt_kl}
 \hT = 
 \rev{ 
 \argmin\limits_{S \in \widehat \mmT^n} 
 }
 \,{\rm KL}(S_\sharp \eta || \nu) \, .
\end{equation}
Then ${\rm KL}(\hnu^n \|\nu) \leq C n^{-k+\frac12+2\lfloor \frac{d}{2}\rfloor}$ for sufficiently large $n\gg 1$.
\end{enumerate}
The constant $C> 0$ is different in each item but it is independent of $n$. 
In particular, for $p_{\eta},p_{\nu}\in C^{\infty}(\Omega; \Omega)$, all three choices of $D$ yield faster-than-polynomial convergence.
\end{proposition}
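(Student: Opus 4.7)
The plan is to apply Theorem~\ref{thm:abstract-error} once for each of the three divergences, combining a single choice of regular transport map, a single polynomial approximation rate, and the three stability results of Section~\ref{sec:stability}. The common setup is as follows. I take $\TT$ to be the $W_2$-Brenier map from $\eta$ to $\nu$; this map exists by standard optimal-transport theory, and Caffarelli's regularity result quoted in~\eqref{eq:locReg} upgrades the hypothesis $p_\eta, p_\nu \in C^k(\Omega)$ with $p_\eta, p_\nu > 0$ to $\TT \in C^{k+1}(\Omega; \Omega)$. On the compact set $\Omega$ this embeds into $H^{k+1}(\Omega; \Omega)$, and $\TT$ is simultaneously a $C^1$-diffeomorphism whose Jacobian has singular values bounded away from $0$. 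I let $\pi_n \TT \in \hmT^n$ denote the $L^2$-orthogonal projection of $\TT$ onto the tensor Legendre basis, so that Proposition~\ref{prop:CanutoQuarteroni} supplies the family of estimates
\begin{equation*}
    \|\TT - \pi_n \TT\|_{H^s(\Omega;\Omega)} \le C n^{-(k+\tfrac{3}{2}-2s)}
\end{equation*}
for admissible Sobolev indices $s$ (interpreting $s=0$ as the $L^2$ case).

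For the Wasserstein statement I choose $\mathscr{B} = L^2_\eta$. The stability bound $W_p(F_\sharp\eta, G_\sharp\eta) \le \|F-G\|_{L^2_\eta}$ from Theorem~\ref{lem:wplp} (with $q=2\ge p$), together with the equivalence $\|\cdot\|_{L^2_\eta} \le C \|\cdot\|_{L^2}$ afforded by the bounded density of $\eta$ on $\Omega$, combines with the $s=0$ instance of the approximation estimate to yield $W_p(\hnu^n,\nu) \le C n^{-(k+3/2)}$. The MMD statement is handled in precisely the same way: identity~\eqref{MMD-kernel-expansion} applied to $h(t) = \exp(-\gamma^2 t^2)$, followed by the mean-value inequality, shows that the Lipschitz function $L$ of Theorem~\ref{thm:MMD-stability} can be chosen as a $\gamma$-dependent constant; picking the H\"older pair $p=q=2$ then collapses the stability bound to $\MMD_\kappa(F_\sharp \eta, G_\sharp \eta) \le C \|F-G\|_{L^2_\eta}$, and the same chain of inequalities gives the rate $n^{-(k+3/2)}$.

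The KL case is the one that drives both the stronger regularity assumption $k > 5/2 + 2\lfloor d/2\rfloor$ and the slower rate, and this is where the main technical obstacle lies. I choose $\mathscr{B} = W^{1,1}_\eta$ and apply Theorem~\ref{thm:kl_compact}; the uniform reference satisfies $c_\eta > 0$ and $L_\eta = 0$, and $\TT$ verifies hypotheses~\ref{as:c2_compact}--\ref{as:bijective_compact} by construction. The difficulty is that the stability constant depends on $\|G\|_{C^2}$ and on a lower bound for the smallest singular value of $J_G$; to deploy the estimate with $G = \pi_n \TT$ one must show that both of these quantities remain under uniform-in-$n$ control. My route is to use the Sobolev embedding $H^s(\Omega) \hookrightarrow C^2(\Omega)$ for $s > 2 + d/2$ applied via $\|\pi_n \TT\|_{H^s} \le \|\TT\|_{H^s} + \|\TT - \pi_n \TT\|_{H^s}$, which forces $\pi_n \TT$ to inherit $C^2$-boundedness and, because $\TT$ is a diffeomorphism, to be itself a bounded diffeomorphism with Jacobian bounded away from singular for all $n$ sufficiently large; the hypothesis $k > 5/2 + 2\lfloor d/2\rfloor$ is precisely the compatibility condition that allows such an $s$ to sit inside the Canuto-Quarteroni range where the displayed approximation estimate is non-trivial. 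Once this uniform control is in place, I bound the $W^{1,1}_\eta$-distance through an embedding from $H^{1+\lfloor d/2\rfloor}$ and substitute $s = 1 + \lfloor d/2 \rfloor$ into the approximation estimate to obtain $\|\TT - \pi_n \TT\|_{W^{1,1}_\eta} \le C n^{-(k-1/2-2\lfloor d/2\rfloor)}$, which is the exponent in the statement. The $p_\eta, p_\nu \in C^\infty$ conclusion then follows by sending $k \to \infty$ in each of the three estimates, yielding faster-than-polynomial decay.
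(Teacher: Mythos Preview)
Your treatment of items 1 and 2 matches the paper's proof essentially verbatim: the $W_2$-optimal map, Caffarelli regularity~\eqref{eq:locReg}, the Canuto--Quarteroni estimate at $s=0$, and the respective stability theorems.

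For item 3, however, there is a genuine gap, and a separate confusion.

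\textbf{The role of $F$ and $G$ is reversed.} The optimization~\eqref{eq:abst_opt_kl} gives $\KL(\hnu^n\|\nu)\le \KL(S_\sharp\eta\|\TT_\sharp\eta)$ for any $S\in\hmT^n$, so in Theorem~\ref{thm:kl_compact} one must take $F=S$ (the polynomial) and $G=\TT$ (the true map). The stability constant then depends only on $\TT$, which is fixed, and the uniform-in-$n$ $C^2$ control you labor over for $\pi_n\TT$ is irrelevant to the constant. (You still need $F$ injective, which is where $C^1$-closeness enters; see below.)

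\textbf{The image-mismatch issue is missing entirely.} Even with the correct roles, the comparison map must satisfy $F(\Omega)\subseteq\Omega$: otherwise $(\pi_n\TT)_\sharp\eta$ charges $\R^d\setminus\Omega$, where $p_\nu=0$, and $\KL((\pi_n\TT)_\sharp\eta\|\nu)=+\infty$. Nothing in the Legendre projection forces $\pi_n\TT(\Omega)\subseteq\Omega$. The paper repairs this by replacing $\pi_n\TT$ with a rescaled map $\varphi^n\coloneqq c_n^{-1}\pi_n\TT$ where $c_n\ge 1$ is chosen so that $\varphi^n(\Omega)\subseteq\Omega$; one then bounds $\|\varphi^n-\TT\|_{H^1}\le \|\pi_n\TT-\TT\|_{H^1}+|1-c_n^{-1}|\,\|\TT\|_{H^1}$ and controls $|1-c_n^{-1}|$ by $\|\pi_n\TT-\TT\|_\infty$. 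It is this $L^\infty$ term---handled via Sobolev embedding $H^{1+\lfloor d/2\rfloor}\hookrightarrow C^0$---that produces the exponent $k-\tfrac12-2\lfloor d/2\rfloor$ and the regularity threshold $k>\tfrac52+2\lfloor d/2\rfloor$ (the latter also needed to verify injectivity of $\varphi^n$ via $C^1$-closeness). Your route of embedding $H^{1+\lfloor d/2\rfloor}\hookrightarrow W^{1,1}$ recovers the same exponent only by accident: on a bounded domain $H^1\hookrightarrow W^{1,1}$ already, so if image mismatch were not an issue you would obtain the strictly better rate $n^{-(k-1/2)}$. The fact that you do not is a signal that the true bottleneck---the renormalization---has been bypassed rather than addressed.
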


A few comments are in order regarding item (3) of Proposition \ref{prop:application-compact-rates}. The proof slightly deviates from the general strategy outlined in Section \ref{sec:main-results}: the map to which we compare $\TT$ and apply the stability result (Theorem \ref{thm:kl_compact}) is not its best polynomial approximation $\pi_n \TT$, but rather a renormalized version of $\pi_n \TT$. The reason for this change is the possibility of image mismatch, i.e., that $\pi_n \TT (\Omega) $ might be larger than $\Omega$, which in turn  would yield an infinite KL divergence.
The renormalization of $\pi_n \TT$ leads to a looser upper bound than we would have intuitively anticipated. Rather than the $H^1$ error of polynomial approximation (see \eqref{eq:canuto}), it is the $L^{\infty}$ function approximation ($\|\pi_n \TT - \TT \|_{\infty}$) which dominates the KL divergence. Since our mechanism for obtaining pointwise convergence from the standard Sobolev-space convergence theory \eqref{eq:canuto} is Sobolev embedding, the error rate deteriorates with the dimension. 

It is for a similar reason that we require higher regularity in higher dimensions, i.e., that $\TT \in C^{5/2+2\lfloor d/2 \rfloor }$. To apply the KL-divergence stability result, Theorem \ref{thm:kl_compact}, we need to ensure invertibility of the polynomial $\pi_n \TT$, and we do so by guaranteeing that it is close enough to the invertible map $\TT$ in  $C^1$. Again we need to pass through Sobolev embedding, and hence the $d$-dependence of the regularity. 

How should we understand these two instances of $d$-dependence? First, since this dependence reflects an analytic passage rather than a property of the algorithm, it might not be sharp. Improvement of these passages is an interesting question. Second, we could get rid of this $d$ dependence by replacing the polynomial approximation class \rev{$ \widehat \mmT ^n$} by local approximation methods, e.g., splines, or radial basis functions. 
Then, $L^{\infty}$ and $C^1$ convergence are guaranteed for a fixed regularity of $\TT$.
The key advantage of polynomial approximation is that the convergence rate $n^{-k+1/2 +2\lfloor d/2 \rfloor}$ improves with the regularity $k$. In particular, for a fixed dimension and a smooth target, we get faster-than-polynomial convergence.

\subsubsection{Unbounded domains}\label{sec:unbounded-domains}
We now turn our attention to the case where $\Omega = \R^d$ and obtain analogous results to 
those of Section~\ref{sec:bounded-domains}.   The main challenge is in controlling the tails: even though the local regularity statement \eqref{eq:locReg} still holds, we can only use the approximation theory results if $\TT$ is regular in a Sobolev sense, which on unbounded domains requires control of the tail. Below, we will impose strong assumptions 
on the reference and target measures $\eta, \nu$ to be able to guarantee Sobolev regularity of the map $\TT$, which we take 
to be the $\W_2$-optimal map once more.

We fix the reference measure 
 $\eta = N(0, I)$ and take the target
 $\nu \in \PP(\R^d)$ with Lebesgue density $p_\nu = \exp( - w(x)) \, dx$. We further assume that 
$w: \R^d \to \R$ is strongly 
convex, i.e., there exists a constant $K >0$ so that $D^2 w  (x)\succeq K \cdot {\rm Id}$ for all $x\in \R ^d$,
where $D^2w$ is the Hessian of $w$ and $\succeq$ is an inequality of positive definite matrices; equivalently, we assume that $\nu$ is and strongly log-concave. 
It was shown in \cite[Thm.~3.1 and~Rem.~3.1]{kolesnikov2013sobolev} that  $\TT$
 satisfies
\begin{subequations}
\begin{equation}\label{kolesnikov-result-1}
    \int_{\R^d}  \| J_{T^\dagger}(x) \|_{\rm HS}^2 \; \eta(\dd x) \le \frac{I_\eta}{K} \, ,
    \end{equation}
    as well as
    \begin{equation}\label{kolesnikov-result-2}
    \int_{\R^d} \left[ \sum_{j=1}^d |  J_{\partial_{x_j} T^\dagger}(x) |_{\rm HS}^2 \right]^{1/2} \eta(\dd x) \le \frac{1}{2 \sqrt K} \int_{\R^d} | x|^2 \; \eta(\dd x),
\end{equation}
\end{subequations}
where $| \cdot |_{\rm HS}$ is the Hilbert-Schmidt matrix norm and  $I_\eta \coloneqq \int |\nabla w(x)|^2 d\eta$ is the Fisher information of $\eta$. For the standard Gaussian reference measure, $I_\eta = \int |x|^2 \eta(dx)$. By Minkowski's integral inequality we have
\begin{equation}
    \left[ \int_{\R^d}  \left( \sum_{j=1}^d |  J_{\partial_{x_j} T^\dagger}(x) |_{\rm HS} \right)^2 \eta(\dd x) \right]^{1/2}
    \le 
    \int_{\R^d} \left[ \sum_{j=1}^d |  J_{\partial_{x_j} T^\dagger}(x) |_{\rm HS}^2 \right]^{1/2} \eta(\dd x),
\end{equation}
which together with \eqref{kolesnikov-result-2} implies 
that $\TT$ belongs to the Sobolev class $H^2_\eta(\R^d; \R^d)$.

Analogously to Section~\ref{sec:bounded-domains}, we proceed to obtain error rates for $\hnu^{n}$ obtained by solving optimization problems of the form 
\eqref{eq:abst_opt_theta} on unbounded domains. The construction of \rev{$\widehat \mmT ^n$} in the unbounded case could be done using Hermite polynomials. However, since on unbounded domains polynomials are unbounded, it is more convenient (and conventional) to use the exponentially weighted {\em Hermite functions}.
Recall the 
multi-dimensional Hermite functions of 
degree ${\mbf m}$ \cite{cheng2003hermite}: 
\begin{equation*}
    h_{\mbf{m}}(x) \coloneqq \prod_{i=1}^d  h_{m_i}(x_i), \qquad 
    h_{m_i}(x_i) \coloneqq    (-1)^{|m_i|} \exp(x_i^2) \partial_{m_i} \exp( - x_i^2), \qquad \forall x \in \R^d \, ,
\end{equation*}
and let \rev{$\widehat \mmT^n$} denote the space of mappings from $\R^d \to \R^d$ where each 
component of the map belongs to the span of Hermite functions of degree $|\mbf m|_1 = n$, i.e. 
\begin{equation}\label{eq:hermite-poly}
    \rev{ \widehat \mmT^n =} \left\{ T \in L^2_\eta(\R^d; \R^d) \;  \Big| \; 
    T_{i} (x) =  \sum_{|\mbf m| \le n} c_{\mbf m}^i  h_{\mbf m}(x), \qquad i =1, \dots, d \right\} \, .
\end{equation}
We recall the following error bound for projections of 
Sobolev functions onto the span of Hermite functions. 
\begin{proposition}[Xu and Guo \cite{cheng2003hermite}]\label{prop:Xu-Guo}
Suppose $F \in H^t_\eta (\R^d)$, let \rev{$\widehat \mmT^n$} be defined as in \eqref{eq:hermite-poly}, 
and let \rev{$\pi_n F \in \widehat \mmT^n$} be the $L^2_\eta(\R^d)$ projection of $F$
onto \rev{$\widehat \mmT^n$}. Then, for $0 \le s \le t$ there exists a constant $C = C(s,t) >0$ 
such that
\begin{equation*}
    \| F - \pi_n F \|_{H^s_\eta(\R^d)} \le C n^{\frac{s-t}{2}} \| F\|_{H^t_\eta(\R^d)}.
\end{equation*}

\end{proposition}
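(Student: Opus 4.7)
The plan is to exploit the fact that, after suitable normalization, the multivariate Hermite basis $\{h_{\mbf m}\}_{\mbf m \in \NN^d}$ is orthonormal in $L^2_\eta(\R^d)$ and simultaneously diagonalizes the Ornstein--Uhlenbeck operator $\mL \coloneqq -\Delta + x \cdot \nabla$, with $\mL h_{\mbf m} = |\mbf m|_1\, h_{\mbf m}$. Expanding $F = \sum_{\mbf m} c_{\mbf m}\, h_{\mbf m}$ in this basis, the projection $\pi_n F$ is exactly the truncation to indices satisfying $|\mbf m|_1 \le n$, so the approximation error is completely described by the tail coefficients. The argument then reduces to comparing the two weighted Sobolev norms on $F - \pi_n F$ through the spectral data of $\mL$.

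The first main step would be to establish a spectral characterization of the weighted Sobolev norms of the form
\begin{equation*}
\| F \|_{H^t_\eta}^2 \asymp \sum_{\mbf m} (1 + |\mbf m|_1)^t |c_{\mbf m}|^2,
\end{equation*}
with constants depending only on $t$ and $d$. For integer $t$ this can be done by induction: the classical ladder identities for Hermite polynomials intertwine a single partial derivative with a shift in the multi-index and a scalar factor of order $|\mbf m|_1^{1/2}$, so Plancherel in $L^2_\eta$ converts each derivative into a factor $(1 + |\mbf m|_1)^{1/2}$ in the coefficient sum, modulo lower-order cross-terms coming from the Gaussian weight. For non-integer $t$, one either interpolates between consecutive integer orders or invokes Meyer's multiplier theorem for the Ornstein--Uhlenbeck semigroup.

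Once the spectral characterization is in hand, the second step is an elementary truncation estimate. Since $0 \le s \le t$ and $|\mbf m|_1 > n$ on the tail, one has
\begin{equation*}
\| F - \pi_n F \|_{H^s_\eta}^2 \lesssim \sum_{|\mbf m|_1 > n} (1 + |\mbf m|_1)^s |c_{\mbf m}|^2 \le (1+n)^{s-t} \sum_{|\mbf m|_1 > n} (1 + |\mbf m|_1)^t |c_{\mbf m}|^2 \lesssim n^{s-t} \| F \|_{H^t_\eta}^2,
\end{equation*}
where $(1+n)^{s-t} \le n^{s-t}$ since $s - t \le 0$. Taking square roots yields the claimed bound with constant $C = C(s,t,d)$.

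The main technical obstacle is precisely the spectral characterization in step one. The Sobolev norm used in the paper is defined through classical partial derivatives integrated against $\eta$, while the spectral norm is defined through fractional powers of $\mL$; establishing their equivalence requires genuine work. In the integer case, one must carefully track commutators between differentiation and the Gaussian weight across the induction, while in the non-integer case one relies on the $L^2_\eta$-boundedness of the Gaussian Riesz transforms, a non-trivial harmonic-analytic input. All remaining steps---the orthogonality of the Hermite basis, the optimality of $\pi_n$ as the $L^2_\eta$ projection, and the tail estimate above---are routine once the norm equivalence has been secured.
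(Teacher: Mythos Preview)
The paper does not prove this proposition; it is quoted as a result from the literature (Xu and Guo, referenced as \cite{cheng2003hermite}) and invoked directly in the applications. There is therefore no ``paper's proof'' to compare against---the statement functions as an imported black box.

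Your sketch is the standard and correct route to results of this type. The two-step strategy---(i) establish a spectral norm equivalence $\|F\|_{H^t_\eta}^2 \asymp \sum_{\mbf m} (1+|\mbf m|_1)^t |c_{\mbf m}|^2$ via the Ornstein--Uhlenbeck eigenbasis, then (ii) apply an elementary tail bound---is exactly how such Jackson-type estimates are proved in the Hermite setting. You are also right to flag step (i) as the genuine work: for integer $t$ the ladder/creation--annihilation identities do the job, and for general $t$ one indeed needs Meyer's inequalities (boundedness of Gaussian Riesz transforms) or interpolation. One minor caution: the paper's Rodrigues formula $h_m(x)=(-1)^m e^{x^2}\partial^m e^{-x^2}$ gives the physicist's Hermite polynomials, which are orthogonal with respect to $e^{-x^2}\,dx$ rather than the standard Gaussian $e^{-x^2/2}\,dx$; in the original source the weighted spaces are defined consistently with their basis, and your spectral argument goes through verbatim once the weight and the eigenoperator are matched (the relevant operator for the physicist's convention is $-\partial_x^2 + 2x\partial_x$ with eigenvalues $2m$, which changes nothing structurally).
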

We now present the analogue of Proposition~\ref{prop:application-compact-rates} 
for the Wasserstein and MMD distances. 
\begin{proposition}\label{prop:application-Rd-rates-wp-mmd}
Take $\Omega= \R^d$, $\eta = N(0, I)$ and let $\nu$ be strongly log-concave.
Let \rev{$\widehat \mmT^n$} denote the space of Hermite functions of degree $n$ as in~\eqref{eq:hermite-poly}, and define $\hnu^n$ as in~\eqref{eq:abst_opt_theta}. 
Then it holds that: 
\begin{enumerate}
    \item If $D= \W_p$ for $p\in [1,2]$, then $\W_p( \hnu^n, \nu) \le C n^{-1}$,
    \item If $D = \MMD_\kappa$ with 
    $\kappa(x,y) = \exp( - \gamma^2 | x- y|^2)$ then $\MMD_\kappa(\hnu^n, \nu) 
    \le C n^{-1}$.
\end{enumerate}
The constant $C> 0$ is different in each item but is independent of $n$.
\end{proposition}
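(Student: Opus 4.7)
The plan is to follow the three-step recipe laid out in Section~\ref{sec:main-results}: combine a stability estimate, a regularity statement for the exact transport map $T^\dagger$, and an approximation-theoretic bound for the Hermite projection, and then invoke Theorem~\ref{thm:abstract-error}. Since the feasibility of $T^\dagger$ with $T^\dagger_\sharp \eta = \nu$ and its $H^2_\eta$ regularity have already been established in the discussion preceding the proposition (via \cite{kolesnikov2013sobolev} and Minkowski's integral inequality), the regularity input is essentially in hand: we have $T^\dagger \in H^2_\eta(\R^d; \R^d)$ with a bound on $\|T^\dagger\|_{H^2_\eta}$ depending only on the strong log-concavity constant $K$ and the second moment of $\eta$.

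For part (1), the Wasserstein case, the plan is to apply Theorem~\ref{lem:wplp} with $q = 2$ and any $p \in [1,2]$ to obtain $W_p(F_\sharp \eta, G_\sharp \eta) \le \|F - G\|_{L^2_\eta}$; this verifies Assumption~\ref{ass:abstract}(i) with $\mathscr B = L^2_\eta$. Theorem~\ref{thm:abstract-error} then gives
\begin{equation*}
W_p(\hnu^n, \nu) \le \operatorname{dist}_{L^2_\eta}(\hmT^n, T^\dagger) \le \|T^\dagger - \pi_n T^\dagger\|_{L^2_\eta},
\end{equation*}
where $\pi_n T^\dagger \in \hmT^n$ is the $L^2_\eta$ projection of $T^\dagger$ onto the space of Hermite functions. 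Applying Proposition~\ref{prop:Xu-Guo} componentwise with $s = 0$ and $t = 2$ yields $\|T^\dagger - \pi_n T^\dagger\|_{L^2_\eta} \le C n^{-1} \|T^\dagger\|_{H^2_\eta}$, which delivers the claimed $n^{-1}$ rate.

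For part (2), the MMD case, I would first verify the hypotheses of Theorem~\ref{thm:MMD-stability} for the Gaussian kernel $\kappa(x,y) = \exp(-\gamma^2 |x-y|^2)$. As noted in Remark~\ref{rem:mmd_hyp}, the canonical feature map $\psi(x) = \kappa(x,\cdot)$ satisfies $\|\psi(x) - \psi(y)\|_\mK \le L(\gamma) |x - y|$ with $L(\gamma)$ a constant depending only on $\gamma$; this follows from the expansion \eqref{MMD-kernel-expansion} together with the mean value theorem applied to $h(t) = \exp(-\gamma^2 t^2)$. Choosing the H\"older pair $p = \infty$, $q = 1$ in Theorem~\ref{thm:MMD-stability} gives $\MMD_\kappa(F_\sharp \eta, G_\sharp \eta) \le L(\gamma) \|F - G\|_{L^1_\eta}$, and Jensen's inequality (since $\eta$ is a probability measure) bounds the $L^1_\eta$ norm by the $L^2_\eta$ norm. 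One then repeats the Wasserstein argument verbatim, invoking Theorem~\ref{thm:abstract-error} and Proposition~\ref{prop:Xu-Guo} with $(s,t) = (0,2)$, to conclude $\MMD_\kappa(\hnu^n, \nu) \le C n^{-1}$.

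There is no genuine obstacle in this proof: unlike the KL case (item~(3) of Proposition~\ref{prop:application-compact-rates}), neither stability result requires invertibility of the approximating map, so we can take the competitor in $\hmT^n$ to be the plain $L^2_\eta$-projection $\pi_n T^\dagger$ with no renormalization, and we never need to invoke Sobolev embedding to pass to pointwise or $C^1$ control. The only mild step deserving care is the componentwise application of Proposition~\ref{prop:Xu-Guo} to the vector-valued map $T^\dagger$, which is harmless since the projection acts component by component and the $H^2_\eta(\R^d; \R^d)$ norm used in the Kolesnikov bound dominates the sum of componentwise $H^2_\eta(\R^d)$ norms.
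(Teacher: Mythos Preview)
Your proposal is correct and follows precisely the approach the paper takes: the paper's proof simply says ``identical to that of Proposition~\ref{prop:application-compact-rates} except that Proposition~\ref{prop:Xu-Guo} is used instead of Proposition~\ref{prop:CanutoQuarteroni},'' and you have faithfully unpacked what that entails (Kolesnikov regularity giving $T^\dagger \in H^2_\eta$, stability via Theorems~\ref{lem:wplp} and~\ref{thm:MMD-stability}, then Xu--Guo with $(s,t)=(0,2)$). Your choice of the H\"older pair $(p,q)=(\infty,1)$ followed by Jensen in the MMD step is a valid realization of the paper's unspecified application of Theorem~\ref{thm:MMD-stability}; one could equally take $q=2$ directly since $L$ is constant.
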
 

\begin{proof}
The proof is identical to that of Proposition~\ref{prop:application-compact-rates}
except that Proposition~\ref{prop:Xu-Guo} is used instead of 
 Proposition~\ref{prop:CanutoQuarteroni}. 
\end{proof}
\begin{remark}\label{remark:wishful-thinking}
The reason 
that we cannot obtain rates that are better than $n^{-1}$ is that under 
the hypotheses of the theorem, we can only establish that $\TT \in H^2_\eta(\R^d;\R^d)$,
as discussed earlier. Had we known that $\TT \in H^{2k}$ for some $k\geq 1$, we would have obtained an $n^{-k}$ rate. To the best of our knowledge, however, no such high-order Sobolev regularity results are known for $\W_2$ optimal maps on unbounded domains.
\end{remark}

One can also obtain bounds for the KL divergence 
for the forward transport problem. However, 
in the current framework, such an analysis involves assumptions on the approximating maps that seem impractical to verify,  mirroring our discussion of KL stability in Section \ref{subsec:KLdivergence}. Instead we dedicate the next section to 
the applied analysis of the backward transport for \emph{triangular} maps on unbounded domains by minimizing the KL 
divergence. Our analysis for triangular maps relies on a specialized stability result, Theorem \ref{thm:abstract-pullback}, which is analogous to the general Theorem~\ref{thm:abstract-error} for the backward transport problem. However, due to the triangularity assumption, it is much stronger; see Section~\ref{app:abstract-error-backward-transport} for details.

\subsection{Backward transport with triangular maps} \label{subsec:pullback_KR}

A popular class of transport maps is the family of \emph{triangular maps}. 
 A map   
 $T\colon \mathbb{R}^d \rightarrow \mathbb{R}^d$ is said to be lower-triangular (henceforth referred to simply 
 as a triangular map)
 if it has the form
\begin{equation} \label{eq:lower_triangular}
T(x) = \begin{bmatrix*}[l] T_1(x_1) \\ T_2(x_1,x_2) \\ \vdots \\ T_d(x_1,\dots,x_d) \end{bmatrix*} \, .
\end{equation}
Perhaps the most well-known example of such maps is the  Knothe--Rosenblatt (KR) rearrangement~\cite{bogachev2005triangular, santambrogio2015optimal, villani-OT}, 
which has an explicit construction based on a sequence of one-dimensional transport problems. 

As we shall see in what follows, it is common and useful to work with triangular maps that are strictly monotone (henceforth we shall use the term \textit{monotone}, for brevity). In the triangular setting, 
the monotonicity of the map reduces to the requirement that 
each component $T_i$ of the map is monotone with respect to its last input variable, i.e., that $x_i \mapsto T_i(x_1,\dots,x_i)$ is monotone increasing for each $(x_1,\dots,x_{i-1}) \in \mathbb{R}^{i-1}$ and $i=1,\dots,d$. If the measures $\nu$ and $\eta$ are  absolutely continuous, then
there exists a unique triangular map $T$ satisfying $T_\sharp \eta = \nu$ and this map is precisely the KR rearrangement
\cite{bogachev2005triangular}.\footnote{Interestingly, the KR map also corresponds to the limit of a sequence of maps in $L^2_\mu(\R ^d; \R ^d)$ that are optimal in the sense of the Monge problem with respect to a weighted $L^2$ cost that penalizes movement more strongly in direction $x_i$ than in $x_{i+1}$, for all $i =1,\dots,d-1$; see \cite{carlier2010knothe}.}


The uniqueness of the KR map is desirable in practice as it can be leveraged to formulate 
algorithms with unique solutions \cite{marzouk2016sampling}. However, working with triangular maps has other practical
advantages which has made them a popular choice in the architecture design of NFs as well 
\cite{kobyzev2020normalizing, papamakarios2021normalizing}.
Most notably, triangular maps are convenient to invert by forward substitution. Furthermore, their  Jacobian determinants can be computed efficiently, making them well-suited for backward transport or density estimation problems. 
To see that, observe that the Jacobian of a triangular  $T$ is given by the product of the partial derivatives of each map component $T_i$ with respect to its last input variable: ${\rm det}(J_T) =~\prod_{i=1}^{d} \frac{\partial  T_i}{\partial_{x_i}}$. Thus, 
we only need the components of $T$ to be differentiable with respect to their last input variables 
to be able to apply the change-of-variables formula
\eqref{eq:pullback_density}.

To this end, let $\eta_{\leq i} = \prod_{\ell=1}^i \eta_{\ell}$ denote the first $i$ marginals of the 
measure $\eta$ and define the function spaces 
\begin{equation}\label{eq:Vi_def}
V_{\eta_{\leq i}}(\mathbb{R}^i;\R) \coloneqq 
\left\{ v \in L^2_{\eta_\le i}(\R^i; \R) \: \Big| \: \| v \|_{V_{\eta_{\leq i}}(\mathbb{R}^i;\R)} < +\infty \right\}, 
\end{equation}
where
\begin{equation*}
    \| v \|_{V_{\eta_{\leq i}}(\mathbb{R}^i;\R)} 
    \coloneqq \left( \| v \|^2_{L^2_{\eta_\le i}(\R^i; \R)} 
    + \int_{\R} |\partial_i v|^2 \dd \eta_{\le i}   \right)^{1/2}.
\end{equation*}
We can then prove  the analogue of Theorem~\ref{thm:kl_pullback} using the KL 
divergence to formulate backward triangular transport problems.
We highlight the simplicity of the conditions of this theorem compared to our previous results 
involving the KL divergence, Theorems \ref{thm:kl_compact}, \ref{thm:kl}, and \ref{thm:kl_pullback}.
The proof is given in Section~\ref{app:proof_kl_pullback_triangular}. 

\begin{theorem} \label{thm:kl_pullback_KRmap}
Let $\eta$ be the standard Gaussian measure on $\R^d$ and $F, G\colon \R^d\to \R^d$ be invertible lower-triangular maps. Suppose the following conditions hold: 
\begin{enumerate}[label=(D\arabic*)]
\item $F_i,G_i \in V_{\eta_{\leq i}}(\R^i;\R)$ for $i=1,\dots,d$. \label{as:pullback_KR}
\item There exists a constant $c> 0$ so that
$\partial_{x_i} F_i, \partial_{x_i} G_i > c >0 $ $\eta$-a.e.\ in  $\R^d$. \label{as:det_fg_pullback_KR}
\item There exists a constant $c_F < \infty$ so that $p_{F^\sharp \eta}(x) \leq c_F p_{\eta}(x)$ for all $x \in \R^d$.
\label{as:Gaussiantail_pullback_KR}
\end{enumerate}
Then it holds that 
\begin{equation} \label{eq:stability_KLpullback_triangular}
    \KL(F^\sharp\eta||G^\sharp\eta) \leq C \sum_{i=1}^{d} \|F_i - G_i\|_{V_{\eta_{\leq i}}(\R^i;\R)},
\end{equation}
where the constant $C >0$ depends only on $c_F, c$ and $ \|F + G\|_{L^2_\eta}$.
\end{theorem}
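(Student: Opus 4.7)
The plan is to mirror the analysis of Theorem~\ref{thm:kl_pullback} while exploiting triangularity to decompose the log-Jacobian term into scalar pieces, each controlled by a single component's $V_{\eta_{\leq i}}$ norm. First, I would invoke assumption~\ref{as:det_fg_pullback_KR} (which makes $F,G$ invertible with Jacobians bounded below by $c^d>0$) and assumption~\ref{as:pullback_KR} to justify applying the change-of-variables formula~\eqref{eq:pullback_density}. Using the explicit form of the standard Gaussian density and the triangular Jacobian $|\det J_T(x)| = \prod_{i=1}^d \partial_{x_i}T_i(x_{\leq i})$, the KL integrand becomes
\begin{equation*}
\log\frac{p_{F^\sharp\eta}(x)}{p_{G^\sharp\eta}(x)} = \frac{|G(x)|^2 - |F(x)|^2}{2} + \sum_{i=1}^d \log\frac{\partial_{x_i} F_i(x_{\leq i})}{\partial_{x_i} G_i(x_{\leq i})}.
\end{equation*}
Expanding $|G|^2 - |F|^2 = (G-F)\cdot(G+F)$, the KL divergence splits into two summands that I would treat independently.

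For the quadratic term, I would use assumption~\ref{as:Gaussiantail_pullback_KR} to bound $p_{F^\sharp\eta}\leq c_F p_\eta$ and then apply Cauchy--Schwarz with respect to $\eta$:
\begin{equation*}
\left|\int \tfrac{1}{2}(G-F)\cdot(G+F)\,p_{F^\sharp\eta}\,\dd x\right| \leq \tfrac{c_F}{2}\,\|F+G\|_{L^2_\eta}\,\|F-G\|_{L^2_\eta}.
\end{equation*}
Triangularity gives $\|F-G\|_{L^2_\eta}^2 = \sum_{i=1}^d \|F_i-G_i\|_{L^2_{\eta_{\leq i}}}^2 \leq \big(\sum_i \|F_i-G_i\|_{V_{\eta_{\leq i}}}\big)^2$, absorbing $\|F+G\|_{L^2_\eta}$ into the constant. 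For the log-Jacobian term, the key observation is that assumption~\ref{as:det_fg_pullback_KR} with the mean value theorem applied to $\log$ yields the pointwise bound
\begin{equation*}
\left|\log\frac{\partial_{x_i} F_i}{\partial_{x_i} G_i}\right| \leq \frac{1}{c}\,\bigl|\partial_{x_i}F_i - \partial_{x_i}G_i\bigr|,
\end{equation*}
since both partial derivatives are $\geq c$. Integrating against $p_{F^\sharp\eta}$, using \ref{as:Gaussiantail_pullback_KR} once more, and applying Cauchy--Schwarz (or just Jensen) in $\eta_{\leq i}$, I obtain
\begin{equation*}
\left|\int \log\frac{\partial_{x_i} F_i}{\partial_{x_i} G_i}\,p_{F^\sharp\eta}\,\dd x\right| \leq \frac{c_F}{c}\,\|\partial_{x_i}(F_i-G_i)\|_{L^2_{\eta_{\leq i}}} \leq \frac{c_F}{c}\,\|F_i-G_i\|_{V_{\eta_{\leq i}}}.
\end{equation*}
Summing over $i$ and combining with the quadratic estimate gives~\eqref{eq:stability_KLpullback_triangular}.

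I expect the main obstacle to be bookkeeping rather than analysis: verifying that the $L^2_\eta$ integrability of $G-F$ together with that of $G+F$ is genuinely controlled by the $V_{\eta_{\leq i}}$ norms of the component differences, and ensuring that the constant absorbs $\|F+G\|_{L^2_\eta}$ cleanly (which is where the stated dependence of $C$ on $\|F+G\|_{L^2_\eta}$ appears). Note that, unlike the general pullback case of Theorem~\ref{thm:kl_pullback}, no Ipsen--Rehman-style determinant estimate is needed here because triangularity reduces the Jacobian ratio to a sum of scalar log-ratios, each handled by a one-line mean value argument under~\ref{as:det_fg_pullback_KR}.
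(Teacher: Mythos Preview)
Your proposal is correct and follows essentially the same route as the paper: bound $p_{F^\sharp\eta}$ by $c_F p_\eta$ via \ref{as:Gaussiantail_pullback_KR}, split the integrand into the quadratic term (handled by Cauchy--Schwarz, giving the $\|F+G\|_{L^2_\eta}$ dependence) and the log-Jacobian term, and then use triangularity plus the Lipschitz property of $\log$ on $[c,\infty)$ to reduce the latter to $\sum_i \|\partial_{x_i}(F_i-G_i)\|_{L^1_{\eta_{\leq i}}}$, controlled by the $V_{\eta_{\leq i}}$ norms. The only cosmetic difference is that the paper applies \ref{as:Gaussiantail_pullback_KR} once upfront rather than term-by-term, arriving at the explicit constant $C=c_F(\|F+G\|_{L^2_\eta}/2 + 1/c)$.
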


We are now ready to present an analogous result to those in Section~\ref{sec:unbounded-domains} for the backward transport problem 
 using the KL divergence. The main technical challenge here is that, to ensure that 
 our class of approximate transport maps are invertible, we need to guarantee that they are monotone. To do so, we consider the representation used in \cite{baptista2020representation,wehenkel2019unconstrained} that defines each monotone function as the integral of a positive function. 

\begin{definition}[Integrated monotone parameterizations] \label{defn:monotone_triangular_maps} Let $r \colon \R \rightarrow \R_{>0}$ be a bijective, strictly positive, Lipschitz continuous function whose inverse is Lipschitz everywhere, except possibly at the origin. Let $f:\R ^d\to \R ^d$ be smooth. Then a monotone triangular map $T$ with components of the form
\begin{equation} \label{eq:monotone_maps}
    T_i(x_{1:i}) 
    = 
    f_i(x_{1:i-1},0) + \int_0^{x_i} r \big(\partial_{i} f_i(x_{1:i-1},t) \big) \dd t
    =:
    \mathcal{R}_i(f_i)(x_{1:i})
\end{equation}
is said to have an integrated monotone parameterization in terms of $f$.
Here we used the shorthand notation $x_{1:j} \equiv (x_1, \dots, x_j)$ for an index $1 \le j \le d$.
\end{definition}

The above definition yields a family of parameterizations based on the choice of the 
functions $r$ and $f$. We also note that the $T_i$ are readily smooth with respect to $x_{1:i-1}$
following the assumptions on $f$.
The conditions required for $r$ were shown in~\cite{baptista2020representation} to yield a stability of the representation with respect to the functions $f_i$. Examples of $r$ satisfying the conditions in Definition~\ref{defn:monotone_triangular_maps} are the soft-plus function $r(z) = \log(1+\exp(z))$ and the shifted exponential linear unit $$r(z) = \rev{\left\{\begin{array}{ll} e^z - 1 & z \leq 0 \\ z & z > 0 \end{array} \right.}.$$

We can now turn to proving an error estimate for a sampling method based on monotone triangular maps:

\begin{proposition} \label{prop:KRconvergence} Suppose there exist constants $0 < c \leq C < \infty$ such that the target density satisfies $c p_{\eta}(x) \leq p_{\nu}(x) \leq C p_{\eta}(x)$ $\eta$-a.e.\thinspace in $\mathbb{R}^d$. Let $T^\dagger$ be the KR rearrangement of the form in~\eqref{eq:lower_triangular} such that $(T^\dagger)_\sharp \nu = \eta$ and assume further that $\mathcal{R}_{i}^{-1}(T^\dagger_i) \in H^2_\eta$
for all $i$, with $\mathcal{R}_i$ as in~\eqref{eq:monotone_maps}. Let \rev{$\widehat \mmT^n$} denote the space of maps $T$ 
that have an integrated monotone parameterization of the form 
$T_i = \mathcal{R}_i(f_i)$  with $f_i$ being a Hermite function of degree $n$ as in~\eqref{eq:hermite-poly}. Then, for the approximate measure $\widehat\nu^n$ defined as
\begin{equation}\label{eq:abst_opt_theta_pullback}
\widehat\nu^n = \hT^\sharp \eta \, , \qquad \hT \in 
\rev{
\argmin\limits_{T \in \widehat \mmT^n} 
}
\, \KL(\nu||T^\sharp\eta) \, , 
\end{equation}
it holds that $$\KL(\nu||\widehat\nu^n) \leq Kn^{-{1/2}} \, ,$$ for some $n$-independent constant $K < \infty$.
\end{proposition}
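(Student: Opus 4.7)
The strategy is to specialize the backward-transport framework (Theorem~\ref{thm:abstract-pullback}) to the triangular setting, using Theorem~\ref{thm:kl_pullback_KRmap} as the stability ingredient and Proposition~\ref{prop:Xu-Guo} as the approximation ingredient, with the integrated monotone parameterization $\mathcal{R}_i$ serving to convert approximation errors on the ``latent'' functions $f_i$ into errors on the triangular map itself. Concretely, since $\nu = (\TT)^\sharp \eta$, optimality of $\hat T$ in \eqref{eq:abst_opt_theta_pullback} gives
\begin{equation*}
\KL(\nu \,\|\, \hnu^n) \;=\; \KL\bigl((\TT)^\sharp \eta \,\|\, \hT^\sharp \eta\bigr)
\;\leq\; \KL\bigl((\TT)^\sharp \eta \,\|\, (T^*)^\sharp \eta\bigr), \qquad \forall\, T^* \in \hmT^n,
\end{equation*}
so it suffices to exhibit one good $T^* \in \hmT^n$ and bound the right-hand side.

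The natural choice is the following. For each $i$, set $f^\dagger_i \coloneqq \mathcal{R}_i^{-1}(\TT_i) \in H^2_\eta$ (using the hypothesis of the proposition), let $f^*_i = \pi_n f^\dagger_i$ be the $L^2_\eta$ projection onto the span of Hermite functions of degree $\leq n$, and define $T^*_i \coloneqq \mathcal{R}_i(f^*_i)$, which lies in $\hmT^n$ by construction. By Proposition~\ref{prop:Xu-Guo} applied with $s=1$ and $t=2$, we obtain
\begin{equation*}
\|f^\dagger_i - f^*_i\|_{H^1_\eta} \;\leq\; C\, n^{-1/2} \|f^\dagger_i\|_{H^2_\eta}.
\end{equation*}
Next, I would invoke Theorem~\ref{thm:kl_pullback_KRmap} with $F = \TT$ and $G = T^*$ to write $\KL((\TT)^\sharp \eta \| (T^*)^\sharp \eta) \leq C \sum_{i=1}^d \|\TT_i - T^*_i\|_{V_{\eta_{\leq i}}}$. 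Verifying the hypotheses there: assumption~\ref{as:Gaussiantail_pullback_KR} follows directly from $p_\nu \leq C p_\eta$ together with the identity $p_{(\TT)^\sharp \eta} = p_\nu$; assumption~\ref{as:pullback_KR} is built into the parameterization (the images $\mathcal{R}_i(\cdot)$ lie in $V_{\eta_{\leq i}}$ whenever $f_i$ does, by the Lipschitz bound on $r$); and assumption~\ref{as:det_fg_pullback_KR} follows because $\partial_{x_i} T^*_i = r(\partial_i f^*_i)$ and $r$ is strictly positive, while for $\TT$ the strict positivity of the KR derivatives follows from the two-sided density bound $c\, p_\eta \leq p_\nu \leq C p_\eta$.

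Finally, translating the $H^1_\eta$ bound on $f^\dagger_i - f^*_i$ into a $V_{\eta_{\leq i}}$ bound on $\TT_i - T^*_i$ uses the Lipschitz stability of the operator $\mathcal{R}_i$ established in~\cite{baptista2020representation}. Indeed, from \eqref{eq:monotone_maps},
\begin{equation*}
\partial_i \TT_i - \partial_i T^*_i \;=\; r(\partial_i f^\dagger_i) - r(\partial_i f^*_i),
\end{equation*}
and the Lipschitz continuity of $r$ gives $\|\partial_i \TT_i - \partial_i T^*_i\|_{L^2_{\eta_{\leq i}}} \leq L_r \|f^\dagger_i - f^*_i\|_{H^1_\eta}$; an analogous estimate for the $L^2$ part of the $V_{\eta_{\leq i}}$ norm follows by writing $\TT_i - T^*_i$ as an integral of $r(\partial_i f^\dagger) - r(\partial_i f^*)$ plus the boundary term $f^\dagger_i(x_{1:i-1},0) - f^*_i(x_{1:i-1},0)$, and bounding both by $\|f^\dagger_i - f^*_i\|_{H^1_\eta}$ via Lipschitz continuity of $r$ and a trace-type argument. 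Combining these ingredients yields $\|\TT_i - T^*_i\|_{V_{\eta_{\leq i}}} \leq C n^{-1/2}$, and summing over $i$ gives the claimed rate. \textbf{The main obstacle} I anticipate is the verification of~\ref{as:det_fg_pullback_KR} for $T^*$ uniformly in $n$: although $r(\partial_i f^*_i) > 0$ pointwise, securing a \emph{uniform} positive lower bound $c' > 0$ independent of $n$ requires quantitative control on $\partial_i f^*_i \to \partial_i f^\dagger_i$ (which is ensured by the strict positivity of $\partial_i f^\dagger_i$ combined with the $H^1_\eta$ convergence and the Lipschitz assumption on $r^{-1}$ in Definition~\ref{defn:monotone_triangular_maps}), and this is the technical point where the hypotheses on $r$ are used most delicately.
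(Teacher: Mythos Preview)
Your proposal is correct and follows essentially the same route as the paper: both choose $T^*_i = \mathcal{R}_i(\pi_n \mathcal{R}_i^{-1}(T^\dagger_i))$, invoke Theorem~\ref{thm:kl_pullback_KRmap} for stability, use the Lipschitz property of $\mathcal{R}_i$ from~\cite{baptista2020representation} (recorded in the paper as Lemmas~\ref{lem:prop3}--\ref{lem:prop9}) to transfer the $V_{\eta_{\leq i}}$ error to an $H^1_\eta$ error on the $f_i$'s, and finish with Proposition~\ref{prop:Xu-Guo}. The obstacle you flag---a uniform-in-$n$ lower bound for \ref{as:det_fg_pullback_KR} on $T^*$---is handled only loosely in the paper as well: it argues that $\mathcal{R}_i(f^n_i) \to T^\dagger_i$ in $V_{\eta_{\leq i}}$, so the stability constant $C'$ converges to a finite limit and is therefore bounded in $n$.
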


\begin{proof} To show this result, we borrow results on the 
smoothness of triangular maps from~\cite{baptista2020representation}, which are included in Appendix~\ref{app:lemma_monotonemaps} for completeness.

We begin by verifying the assumptions of Theorem~\ref{thm:kl_pullback_KRmap}, the stability result for the KL divergence to pullback by triangular maps, when taking one map to be the Knothe--Rosenblatt rearrangement $T^\dagger$ that pulls back $\eta$ to $\nu$, i.e., that satisfies $p_{(T^\dagger)^\sharp \eta} = p_{\nu}$. From the lower bound on the probability density function $p_{\nu}>cp_{\eta}$,
it follows from Lemma~\ref{lem:prop9} that each component of $T^\dagger$ has affine asymptotic behavior with a constant partial derivative, i.e., $T_i^\dagger(x_{1:i-1},x_i) = \mathcal{O}(x_i)$ and $\partial_{x_i} T_i(x_{1:i-1},x_i) = \mathcal{O}(1)$ as $|x_i| \rightarrow \infty$. Hence, $T_i^\dagger \in V_{\eta_{\leq i}}(\R^i;\R)$ for $i=1,\dots,d$. From the same proposition we also have $\partial_{x_i} T_i \geq c^{-} > 0$ for some $c^{-} > 0$ for all $(x_{1},\dots,x_{i-1}) \in \R^{i-1}$, and hence $T^\dagger$ satisfies Assumption~\ref{as:det_fg_pullback_KR}.
Lastly, by the assumption on the target density we have $p_{(T^\dagger)^\sharp\eta}(x) = p_{\nu}(x) \leq C\eta(x)$ for some constant $C$, hence satisfying Assumption~\ref{as:Gaussiantail_pullback_KR}.

Now we consider the class of monotone transport maps defined in Definition~\ref{defn:monotone_triangular_maps}. 
From Lemma~\ref{lem:prop3}, we have $T_i = \mathcal{R}_i(f_i) \in V_{\eta_{\leq i}}$ for any $f_{i} \in V_{\eta_{\leq i}}$ and Lipschitz function~$r$. Furthermore, $\partial_{x_i} \mathcal{R}_i(f_i)(x_{1:i}) > 0$ if $\essinf \partial_{x_i} f_{i} > -\infty$. These conditions 
are satisfied 
by taking our approximate functions to be \rev{$f_i \in \widehat \mmT^n$.} In particular, we can take $f_i \coloneqq \pi_n \mathcal{R}_i^{-1}(T_i^\dagger)$ by projecting the (possibly) non-monotone function that yields the $i$-th component of the KR rearrangement (via the operator $\mathcal{R}^{-1}$) onto \rev{$\widehat \mmT^n$.} We denote the resulting triangular and monotone map by $\mathcal{R}(f)$. Applying Theorem~\ref{thm:kl_pullback_KRmap} and Theorem~\ref{thm:abstract-pullback} (our abstract framework) with $T^\dagger$ and using the sub-optimality of the map $\mathcal{R}(f)$, we have $$\KL(\nu||\widehat\nu^n) 
\leq \KL((T^\dagger)^\sharp\eta||\mathcal{R}(f)^\sharp\eta) \leq C'\sum_{i=1}^{d} \| T_i^\dagger - \mathcal{R}_i(f_i) \|_{V_{\eta_{\leq i}}},$$ 
where $C' = C(\|T^\dagger + \mathcal{R}(f)\|_{L^2_\eta}/2 + 1/c^{-})$. 

To apply the approximation theory results, we relate the convergence of $f_i$ and the monotone maps. For  $r$ with Lipschitz constant $L$, from Lemma~\ref{lem:prop5} 
we have that $f_i^\dagger \coloneqq \mathcal{R}_i^{-1}(T_i^\dagger) \in V_{\eta_{\leq i}}$ and that the inverse operator $\mathcal{R}_i^{-1}$ is Lipschitz, i.e., there exists some constant $C_L < \infty$ (depending on the lower bound of the gradient of $T_i$ and on the choice of $r$)
such that 
\begin{equation}\label{eq:Rstability}
\|\mathcal{R}_i(f_i^\dagger) - \mathcal{R}_i(f_i)\|_{V_{\eta_{\leq i}}} \leq C_L\|f_i^\dagger - f_i\|_{V_{\eta_{\leq  i}}} \, .    
\end{equation}
Thus, we have $\KL(\nu||\widehat\nu^n) \leq C'C_L \sum_{i=1}^{d} \|f_i^\dagger - f_i\|_{V_{\eta_{\leq i}}} \leq C' C_L \sum_{i=1}^d \|f_i^\dagger - f_i\|_{H_{\eta_{\leq i}}^1} \lesssim n^{-1/2}$ by Proposition~\ref{prop:Xu-Guo} for $f_i^\dagger \in H^{2}_\eta$, where we recall that $f_i$ is the polynomial approximation of $f_i^\dagger$.
Lastly, for $f_i^n \rightarrow f_i^\dagger$ we have that $\mathcal{R}_i(f_i^n) \rightarrow T_i^\dagger$ in $V_{\eta_{\le i}}$
as $n \rightarrow \infty$ for each $i = 1,\dots,d$. Thus, the constant $C'$ approaches $C(\|2T^\dagger\|_{L^2_\eta}/2 + 1/c^{-})$ and is bounded for all $n$.
\end{proof}
\begin{remark}\rev{Equation \eqref{eq:Rstability} can be viewed as a Lipschitz property of the integrated representation of monotone maps (see Definition~\ref{defn:monotone_triangular_maps}). This result and the techniques involved can also be used to proof the analogs of Proposition~\ref{prop:application-Rd-rates-wp-mmd} for the Wasserstein-$p$ and MMD distances with parameterized monotone and triangular maps. We do not pursue this direction in this work.}
\end{remark}

Here we make a similar comment to Remark \ref{remark:wishful-thinking}: if we strengthen the hypotheses of Proposition \ref{prop:KRconvergence} such that $\mathcal{R}_{i}^{-1}(T^\dagger_i) \in H^t_\eta$ for some $t>2$, then we get a faster rate of convergence, i.e., ${\rm KL}(\nu ||\hat{\nu}^n)\lesssim n^{(1-t)/2}$, as an immediate consequence of the relevant approximation theory, Proposition \ref{prop:Xu-Guo}. Hence, improvements in the study of Sobolev regularity of KR maps, see e.g., the work of~\cite{kolesnikov2014continuity}, combined with a higher-order stability analysis of the integrated parameterization in Definition~\ref{defn:monotone_triangular_maps} (analogous to Lemma \ref{lem:prop3})  will yield improvements in our numerical analysis of sampling methods.

\section{Numerical experiments} \label{sec:numerics}

In this section we numerically validate the approximation results obtained in Section~\ref{sec:applications} 
for various realizations of the abstract algorithm of Section~\ref{sec:main-results}. Sections~\ref{subsec:wasserstein_compact}--\ref{subsec:wasserstein_compact_results} investigate the algorithm that minimizes the Wasserstein distance, while Section~\ref{subsec:KLdivergence} investigates the Kullback-Leibler divergence between pullback measures. The code to reproduce the following numerical results is available at: \url{https://github.com/baptistar/TransportMapApproximation}.

\subsection{Minimizing Wasserstein distance: methodology} \label{subsec:wasserstein_compact}

In this set of experiments, we let $D=W_p$, the Wasserstein-$p$ distance on $\Omega =[-1,1]$. We first comment on how the computation of \eqref{eq:abst_opt} is done in these settings.

Recall that for any two probability measures $\mu, \phi$ on $\R$, we have that $$W_p(\mu, \phi) = \|F_{\mu}^{-1}-F_{\phi}^{-1}\|_{L^p} \, ,$$ where $F_{\mu}^{-1}$ is the inverse CDF (quantile) of $\mu$, and similarly for $F_{\phi}^{-1}$; see e.g., \cite{santambrogio2015optimal}. Hence, our algorithm \eqref{eq:abst_opt} can be written as
\begin{equation}\label{eq:opt_wass_pre}
\widehat{T} \in 
\rev{
\argmin_{S\in \widehat \mmT} }\,
W_p^p(\nu, S_{\sharp}\eta ) 
= \rev{\argmin_{S\in \widehat \mmT}} \,
\int_0^1 |F_{\nu} ^{-1}(y) - F_{S_{\sharp}\eta}^{-1}(y)|^p \, dy \,  .
\end{equation}
Unfortunately, this is still not a feasible optimization problem: it requires that in each iteration we compute the inverse CDF of $S_{\sharp}\eta$. For non-invertible maps $S$, the CDF is not available in closed form and so it must be estimated empirically by means of Monte Carlo sampling from $\eta$, which makes the estimation expensive. Furthermore, the derivative (with respect to $S$) in the optimization then becomes nontrivial as well.

Rather, we make an assumption, that we will henceforward justify heuristically as well as empirically: assume that $S$ is a monotone increasing map ($S' > 0$). Why is this assumption useful? A monotone map that pushes forward $\eta$ to $S_{\sharp} \eta$ is necessarily an OT map between these measures with respect to the Wasserstein-$p$ metric for $p\geq 1$ (the unique map for $p>1$) \cite{santambrogio2015optimal}. Hence, $S$ can be written as $S=F_{S_{\sharp}\eta}^{-1} \circ F_\eta(x)$, and so by a change of variables $F_{S_{\sharp}\eta }^{-1}(y) = S(F_\eta^{-1}(y))$. Hence, under the monotonicity assumption, \eqref{eq:opt_wass_pre} transforms into
\begin{align*}
&{\rm If}~~ S'>0 \, \qquad \forall\,\rev{S \in \widehat \mmT} \, , \qquad {\rm then} \\
\rev{\argmin\limits_{S\in \widehat \mmT}} \int_0^1 &|F_{\nu} ^{-1}(y) - F_{S_{\sharp}\eta}^{-1}(y)|^p \, dy  = \rev{\argmin\limits_{S\in \widehat \mmT}} \int_0^1 |F_{\nu} ^{-1}(y) - S\left( F_{\eta}^{-1}(y) \right)|^p \, dy \, . \numberthis \label{eq:opt_wass_practical}
\end{align*}
This last expression is now more amenable to numerical optimization. Indeed, $F_{\eta}^{-1}$ and $F_{\nu}^{-1}$ are fixed throughout the optimization, and can be computed  at the beginning of the procedure. 

There is another gain to be made: for $p=2$, \eqref{eq:opt_wass_practical} can be reformulated as a least-squares problem with respect to $S$. For the linear expansion, $S(x) = \sum_{i=1}^{n} \alpha_i h_{m_i}(x)$ in terms of basis functions $h_{m_i} \colon \R \rightarrow \R$, the solution of~\eqref{eq:opt_wass_practical} for the vector 
$\alpha \in \R^{n}$ 
is then given in closed form by 
\begin{equation}\label{eq:wass_minim_ls}
    \alpha = A^{-1}b \, ,
\end{equation}
where the elements of $A \in \mathbb{R}^{n \times n}$ are the $L_\eta^2$ inner product of the expansion elements, i.e., 
$A_{ij} = \int_{0}^{1} h_{m_i}\left (F_{\eta}^{-1}(y) \right ) h_{m_j} \left (F_{\eta}^{-1}(y) \right ) \dd y$, and the entries of $b \in \mathbb{R}^{n}$ are the projections of those elements on the quantile function for $\nu$, i.e., $b_j = \int_{0}^{1} h_{m_i} \left (F_{\eta}^{-1}(y) \right ) F_{\nu}^{-1}(y) \dd y$. 

How do we justify the monotonicity assumption? First, we note that $\TT$, the true map for which $\TT_{\sharp}\eta = \nu$, can always be chosen to be monotone (i.e., by choosing $\TT = F_{\nu}^{-1} \circ F_\eta(x)$). Moreover, when using a iterative method (for $p \neq 2$) to minimize~\eqref{eq:opt_wass_practical}, we initialize our search at the identity map $S(x)=x$, which is monotone. We also expect that for a sequence of spaces \rev{$\widehat \mmT ^{n}$} which becomes dense in $L^2$ as $n\to \infty$, the polynomial approximations of $\TT$ will become monotone as well. While this is not a proof, it is the heuristic that leads us to minimize \eqref{eq:opt_wass_practical} in our experiments.
Finally, the empirical evidence we present below will also show that throughout most experiments the learned map $\widehat{T}$ in fact remains monotone.

\subsection{Minimizing the Wasserstein distance: numerical results} \label{subsec:wasserstein_compact_results}
As noted above, in our first experiment we choose the reference $\eta$ to be the uniform measure on $[-1,1]$ and let the target be
\begin{equation}\label{eq:nuk_compact1d}
\nu_k \equiv  (T_{k})_{\sharp} \eta \, , \qquad T_k(x) \equiv  x^{2k}\text{sign}(x) \, , \quad k\in \mathbb{N} \, .
\end{equation} 
The motivation behind this particular choice of $T_k$ and $\nu_k$ is that $\TT=T_k \in C^{2k} \setminus C^{2k+1}$ for each $k\in \mathbb{N}$, hence allowing us to test the sharpness of the  polynomial rates in Proposition \ref{prop:application-compact-rates}. For each order $k$, we seek an approximate map $\widehat{T}^n$ in the span of Legendre polynomials up to maximum degree $n\in \mathbb{N}$; see \eqref{eq:LegTm}.
 We minimize the Wasserstein-$2$ distance to find $\widehat{T}^{n}$ by discretizing the integrals in~\eqref{eq:wass_minim_ls} 
 using $10^4$ Clenshaw-Curtis quadrature points and computing the coefficients of the linear expansion in closed form.
 
 Figure~\ref{fig:W2_compact_poly}
 plots the $W_2$ objective in~\eqref{eq:opt_wass_practical} and the $L^2$ error in the map with an increasing polynomial degree~$n$. We observe that the convergence rates are faster for higher degrees of regularity $k$, and closely match the theoretical convergence rates derived in Proposition~\ref{prop:application-compact-rates}. Furthermore, the $W_2$ and $L^2$ convergence rates closely match for each~$k$. For easier comparison, Tables~\ref{tab:conv_errors_k1} and \ref{tab:conv_errors_k3} present the $W_2$ and $L^2$ errors, as well as the predicted value for $k=1,3$, respectively. To verify that the resulting map is monotone, and hence is converging to the (unique) monotone transport map, we measure the following probability of the estimated map being monotone
 \begin{equation} \label{eq:ProbMonotone}
\mathbb{P}_{x \sim \eta, x' \sim \eta}\left[ \left \langle \widehat{T}^n(x) - \widehat{T}^n(x'), x - x'  \right \rangle > 0\right],
\end{equation}
using $10^4$ pairs of i.i.d.\thinspace test points drawn from the product reference measure $\eta \otimes \eta$. The values for the probability are included in Tables~\ref{tab:conv_errors_k1} and~\ref{tab:conv_errors_k3}. We notice that $\widehat{T}^n$ converges to the monotone map and thus the objective in~\eqref{eq:opt_wass_practical} converges to the exact Wasserstein distance, even though the estimated maps were not \textit{a priori} restricted to be monotone. Examples of the approximate maps resulting from our experiments (approximating \eqref{eq:nuk_compact1d} by minimizing~$W_2$) are 
presented in Figure~\ref{fig:Wp_map_Legendre}. 

\begin{figure}[!ht]
  \centering
  \begin{subfigure}[b]{0.48\textwidth}
    \includegraphics[width=\textwidth]{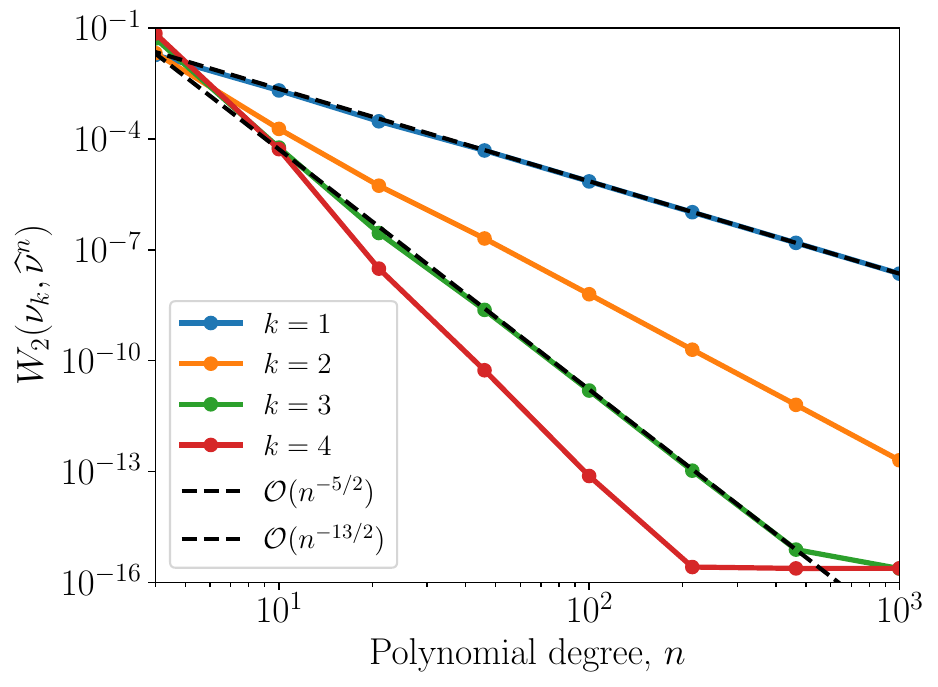}
    \caption{}
  \end{subfigure}
  \begin{subfigure}[b]{0.48\textwidth}
    \includegraphics[width=\textwidth]{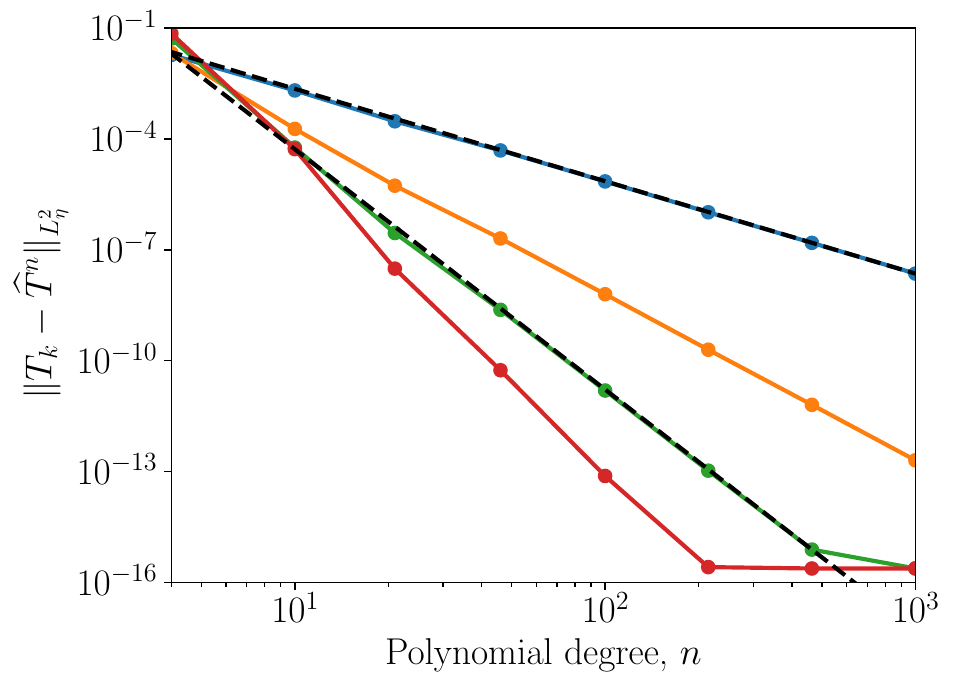}
    \caption{}
  \end{subfigure}
  \vspace{-0.5em}
  \caption{Convergence results for approximating $\nu_k$ in~\eqref{eq:nuk_compact1d} using Legendre polynomials of degree $n$ and the optimization problem \eqref{eq:opt_wass_practical} with $p=2$ for (a) $W_2 (\nu_k, \widehat{\nu}^n)$, (b) $\|T_k - \widehat{T}^n \|_{L^2}$.} 
  \label{fig:W2_compact_poly}
\end{figure}

\begin{table}
\centering
\scriptsize
\begin{tabular}{|c c c c c c c c|}
 \hline
 Degree $n$ & $1$ & $2$ & $4$ & $10$ & $21$ & $46$ & $100$ 
 \\ \hline
$W_2(\nu,\widehat\nu^n)$ & $ 1.12\times 10^{-1}$ & $ 1.12\times 10^{-1}$ & $ 1.86\times 10^{-2}$ & $ 2.04\times 10^{-3}$ & $ 2.98\times 10^{-4}$ & $ 4.84\times 10^{-5}$ & $ 7.05\times 10^{-6}$ 
\\
$\|T - \widehat T^n \|_{L^2_\eta}$ & $ 1.12\times 10^{-1}$ & $ 1.12\times 10^{-1}$ & $ 1.86\times 10^{-2}$ & $ 2.05\times 10^{-3}$ & $ 3.00\times 10^{-4}$ & $ 4.75\times 10^{-5}$ & $ 6.94\times 10^{-6}$
\\
$O(n^{-5/2})$ & $ 7.05\times 10^{-1}$ & $ 1.25\times 10^{-1}$ & $ 2.20\times 10^{-2}$ & $ 2.23\times 10^{-3}$ & $ 3.49\times 10^{-4}$ & $ 4.91\times 10^{-5}$ & $ 7.05\times 10^{-6}$ 
\\
$\mathbb{P}[\text{Mon}]$ & $1.00$ & $ 1.00$ & $ 1.00$ & $ 1.00$ & $ 1.00$ & $ 1.00$ & $ 1.00$ 
\\ \hline
\end{tabular}
\caption{The Wasserstein-2 distance, $L^2$ error in the map, a reference $\mathcal{O}(n^{-5/2})$ rate, and the estimated map's monotonicity \eqref{eq:ProbMonotone} for a target distribution with $k = 1$. The scaling for the reference rate is set to match the $W_2$ distance at $n=100$.~\label{tab:conv_errors_k1}}
\end{table}

\begin{table}
\centering
\scriptsize
\begin{tabular}{|c c c c c c c c|}
 \hline
 Degree $n$ & $1$ & $2$ & $4$ & $10$ & $21$ & $46$ & $100$ 
 \\ \hline
$W_2(\nu,\widehat\nu^n)$ & $ 1.73\times 10^{-1}$ & $ 1.73\times 10^{-1}$ & $ 5.20\times 10^{-2}$ & $ 5.80\times 10^{-5}$ & $ 2.83\times 10^{-7}$ & $ 2.36\times 10^{-9}$ & $ 1.55\times 10^{-11}$ 
\\
$\|T - \widehat T^n \|_{L^2_\eta}$ & $ 1.74\times 10^{-1}$ & $ 1.74\times 10^{-1}$ & $ 5.22\times 10^{-2}$ & $ 5.85\times 10^{-5}$ & $ 2.85\times 10^{-7}$ & $ 2.35\times 10^{-9}$ & $ 1.54\times 10^{-11}$ 
\\
$O(n^{-13/2})$ & $ 1.55\times 10^{2}$ & $ 1.71\times 10^{0}$ & $ 1.89\times 10^{-2}$ & $ 4.90\times 10^{-5}$ & $ 3.94\times 10^{-7}$ & $ 2.41\times 10^{-9}$ & $ 1.55\times 10^{-11}$ 
\\
$\mathbb{P}[\text{Mon}]$ & $ 1.00$ & $ 1.00$ & $ 0.77$ & $ 1.00$ & $ 1.00$ & $ 1.00$ & $ 1.00$ 
\\ \hline
\end{tabular}
\caption{The Wasserstein-2 distance, $L^2$ error in the map, a reference $\mathcal{O}(n^{-13/2})$ rate, and the estimated map's monotonicity  \eqref{eq:ProbMonotone} for a target distribution with $k = 3$ \label{tab:conv_errors_k3}. The scaling for the reference rate is set to match the $W_2$ distance at $n=100$.}
\end{table}

In the next set of experiments, we choose $\eta$ to be the standard Gaussian measure $\mathcal{N}(0,1)$ on $\mathbb{R}$ and let $\nu$ be the one-dimensional Gumbel distribution, which is supported over the entire real line. The density of $\nu$ is given by
\begin{equation}\label{eq:gumble}
p_{\nu}(x) = \frac{1}{\beta} e^{-\left((x - \mu)\beta^{-1} + e^{-(x - \mu)\beta^{-1}} \right)} \, ,
\end{equation} where we choose the parameters $\mu = 1$ and $\beta = 2$. 
For $p \in \{1,2\}$, we minimize the Wasserstein-$p$ distance in~\eqref{eq:opt_wass_practical} to approximate the target measure 
by the pushforward of a map $\widehat{T}^n$ that is parameterized as a linear expansion of Hermite functions up to degree $n \in \mathbb{N}$; see Section \ref{sec:unbounded-domains}. For $p = 1$, we optimize with respect to the coefficients in the Hermite expansion using an iterative BFGS optimization algorithm~\cite{Nocedal}. For $p = 2$, we use the closed-form expression in~\eqref{eq:wass_minim_ls}. For both $p=1,2$, we use $10^4$ Clenshaw-Curtis quadrature points to evaluate the objective and compute the unknown coefficients. 

Figure~\ref{fig:Wp_gumbel_poly} plots the $W_p$ objective based on the monotonicty assumption in~\eqref{eq:opt_wass_practical} , (labeled `Monotone $W_p$'), an empirical estimate of the Wasserstein distance  (labeled `Empirical $W_p$') that is computed using $10^7$ test points, and the $L^2$ error in between the estimated map $\widehat{T}^n$ and the optimal monotone map $T^\dagger$. Unlike the compact domain setting above, we observe an exponential (or faster than polynomial) decay rate with~$n$. In particular, the dashed lines in Figure~\ref{fig:Wp_gumbel_poly} illustrate a close agreement between the observed convergence rates for the Wasserstein distances and the exponential curves $\exp(-0.5n)$ and $\exp(-0.3n)$ for $p=1$ and $p=2$, respectively.
We also observe that the decay rate for the Wasserstein $W_p$ distance and $L^p$ error in the map are close, indicating that our stability results for Wasserstein 
distances are tight. Here again, we note that the empirical $W_1$ saturates around $5\times 10^{-3}$, due to the use of finitely many samples from both measures.

Figure~\ref{fig:Wp_map_Hermite} presents an example of the resulting approximate maps for this problem. 
We observe that the estimated maps are converging to the 
true monotone map, although they remain non-monotone in the tails. The estimated maps are monotone in the region of high probability of the Gaussian reference measure $\eta$, however, and the region of non-monotonicity shrinks as $n$ increases. In fact, the probability of being monotone as measured using~\eqref{eq:ProbMonotone} is greater than $0.97$ for $n \geq 3$ and $1.00$ for $n \geq 9$ for both $p = 1,2$. As a result, the objective in~\eqref{eq:opt_wass_practical} approaches the exact Wasserstein distance, which is validated by the empirical estimates of $W_p(\nu,\widehat\nu^n)$ in Figure~\ref{fig:Wp_gumbel_poly}. We note that the empirical estimation requires an increasing number of samples to accurately compute small Wasserstein distances, and hence the estimate becomes inaccurate for large $n$.

\begin{figure}[!ht]
  \centering
  \begin{subfigure}[b]{0.48\textwidth}
    \includegraphics[width=\textwidth]{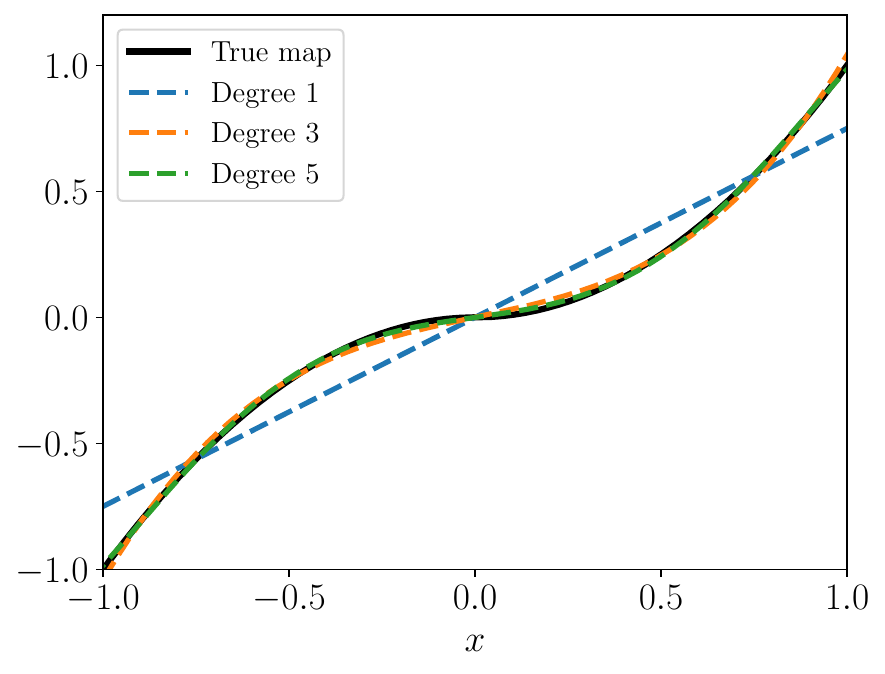}
    \caption{\label{fig:Wp_map_Legendre}}
  \end{subfigure}
  \begin{subfigure}[b]{0.48\textwidth}
    \includegraphics[width=0.95\textwidth]{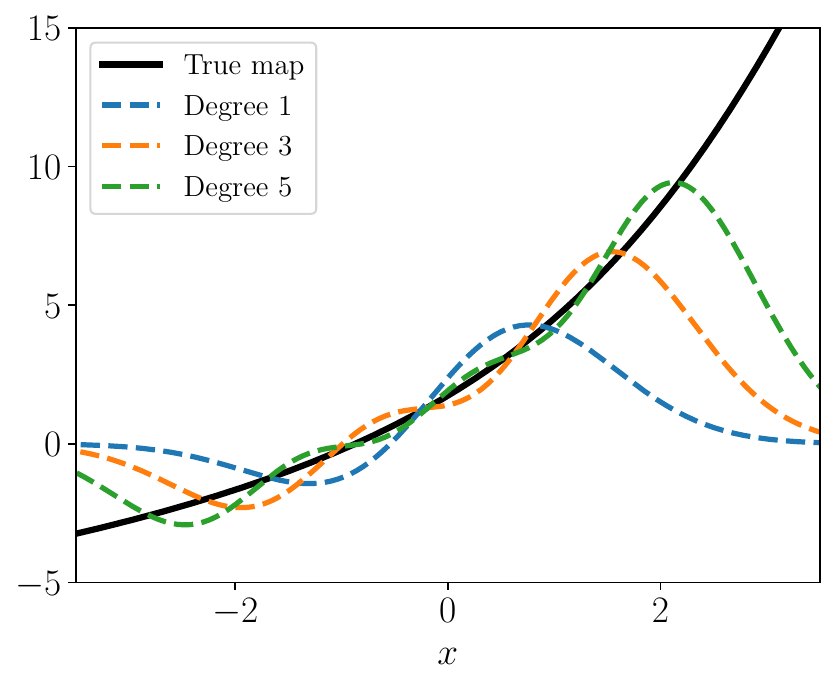}
    \caption{\label{fig:Wp_map_Hermite}}
  \end{subfigure}
  \vspace{-0.5em}
  \caption{Approximation of the map $T$ by minimizing the Wasserstein-2 distance for (a) the pushforward measure $(T_1)_\sharp\eta$ in~\eqref{eq:nuk_compact1d} using Legendre polynomials on a compact domain and (b) the Gumbel distribution in~\eqref{eq:gumble} using Hermite functions on the entire real line. \label{fig:Wp_map}
  }
\end{figure}

\subsection{Minimizing KL divergence} \label{subsec:KLdivergence}

The next set of numerical experiments demonstrates our theoretical results for the backward transport problem, presented in Section \ref{subsec:pullback_KR}. We examine the convergence of monotone maps that seek to pull back the standard Gaussian $\eta = N(0,1)$ 
to the Gumbel distribution $\nu$, as defined in~\eqref{eq:gumble} with parameters $\mu = 0$ and $\beta = 1$. 
To compute this transport map we consider $N = 10^4$ empirical 
samples\footnote{We recall that our theoretical results all concern continuous densities (intuitively, $N\to \infty)$. The number of samples $N$ used here is thus chosen large enough to keep finite-sample effects relatively small.} 
$\{x^j\}_{j=1}^N$ drawn i.i.d.\thinspace from $\nu$. We form the empirical measure $\widecheck{\nu}_N = \frac{1}{N}\sum_{j=1}^N \delta_{x^j}$ and solve the optimization problem 
\begin{align}
    \widehat{T}^{n}  \in 
    \rev{
    \argmin_{S \in \widehat \mmT^{n}}
    }\,
    \KL(\widehat{\nu}_N||S^\sharp \eta) &= \rev{\argmin_{S \in \widehat \mmT^{n} } }
    \,\frac{1}{N} \sum_{j=1}^N -\log p_{S^\sharp\eta}(x^j) \nonumber \\
    &= 
    \rev{
    \argmin_{S \in \widehat \mmT^{n} }
    }
    \,\frac{1}{N} \sum_{j=1}^N |S(x^j)|^2 - \log|S'(x^j)|, \label{eq:KLminimization_GaussianRef_obj}
\end{align}
where we take \rev{$\widehat \mmT^{n}$} to be the space of monotone maps in Definition~\ref{defn:monotone_triangular_maps} that are transformations of non-monotone functions $f$, and where the class of non-monotone maps $f$ is the space of Hermite functions of degree $n$, for $n \in \{1,\dots,10\}$. Our goal is to verify the convergence rate in Proposition~\ref{prop:KRconvergence}.
\begin{figure}[!ht]
  \centering
  \begin{subfigure}[b]{0.48\textwidth}
    \includegraphics[width=\textwidth]{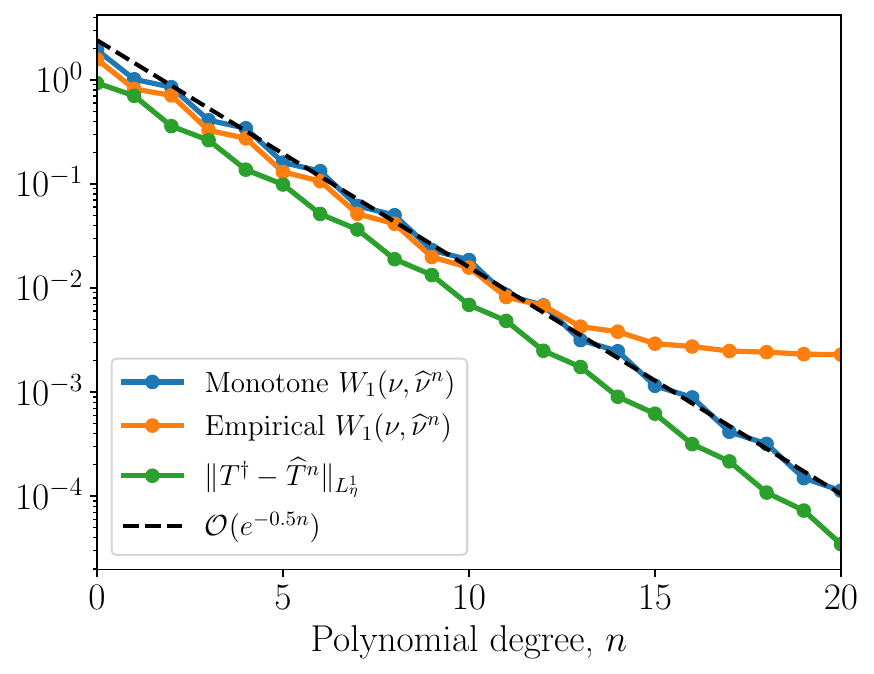}
    \caption{}
  \end{subfigure}
  \begin{subfigure}[b]{0.48\textwidth}
    \includegraphics[width=\textwidth]{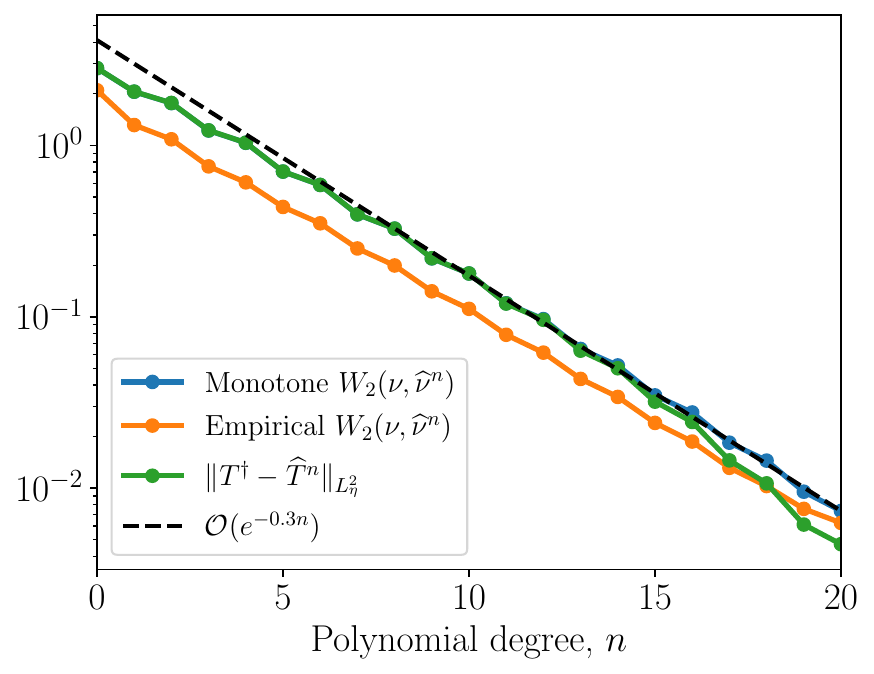}
    \caption{}
  \end{subfigure}
  \vspace{-0.5em}
  \caption{Convergence of the pushforward measure to the Gumbel distribution in terms of Wasserstein objective in~\eqref{eq:opt_wass_practical}, the empirical Wasserstein distance, and the $L^p$ error in the approximate map $\widehat{T}^n$ when \textbf{(a)} solving \eqref{eq:opt_wass_practical} for $p=1$, and \textbf{(b)} using \eqref{eq:wass_minim_ls} for~$p=2$. The dashed lines illustrate exponential convergence rates based on empirical fits to the computed Wasserstein distances. 
  \label{fig:Wp_gumbel_poly}}
\end{figure}

To compute the error of the estimated transport map, we rely on the fact that the monotone transport map pulling back the Gaussian reference to the Gumbel distribution is unique 
and can be identified analytically as $T^\dagger(x) = F_\eta^{-1} \circ F_\nu(x)$. 
 Figure~\ref{fig:tmapprox-gumbel_KL} presents the convergence of $\KL(\nu || (\widehat{T}^n)^{\sharp} \eta)$ and $\|T^\dagger - \widehat{T}^n\|_{L_\eta^2}$ as a function of the polynomial degree $n$. The KL divergence and $L^2_\eta$ error are computed using an independent test set of $10^5$ i.i.d.\thinspace samples drawn
 from the Gumbel distribution. 
Overall, we observe a faster rate of decay of the KL divergence than the $L^2_\eta$ error (and hence also the $H^1_\eta$ error) between the computed map $\widehat{T}^n$ and the 
true monotone map $T^\dagger$. It will be interesting in future work to study if this discrepancy is due to
lack of sharpness in our theoretical results. Moreover, as in Section~\ref{subsec:wasserstein_compact_results} when minimizing the Wasserstein distance, the KL divergence decays nearly exponentially fast with the degree $n$ (or at the very least, faster than polynomially). The dashed lines in Figure~\ref{fig:tmapprox-gumbel_KL} demonstrate that the Wasserstein distance and $L^2$ error closely match the exponential convergence rates $\exp(-0.2n)$ and $\exp(-0.1n)$, respectively.

Why do we see a faster-than-polynomial rate of convergence, whereas Proposition \ref{prop:KRconvergence} predicts only an $n^{-1/2}$ rate? In general, the regularity theory for optimal transport/KR maps for smooth distributions only guarantees that $\TT \in H^1 \cap C^{\infty}$. Since the approximation theory of Hermite functions relies on (global) Sobolev regularity, we can only guarantee the $n^{-1/2}$ rate (see the proof of Proposition~\ref{prop:KRconvergence} for details). For this particular experiment, however, we know that the transport map from the Gumbel distribution to the Gaussian has very ``light'' tails, and is therefore in $H^s$ for all $s\geq 0$. Hence, we immediately get a convergence rate faster than $n^{-s}$ for all $s\geq 0$. We discuss this issue in more detail in Remark~\ref{remark:wishful-thinking}.

Figure~\ref{fig:tmapprox-gumbel_L2mappiweight} shows the true and approximate transport maps found by solving~\eqref{eq:KLminimization_GaussianRef_obj} with increasing polynomial degree $n$. In comparison to Figure~\ref{fig:Wp_map_Hermite}, where we sought the map pushing forward $\eta$ to $\nu$, here we seek the inverse map. In addition, unlike the direct parameterization of the map using Hermite functions, the parameterization in Definition~\ref{defn:monotone_triangular_maps} guarantees that the approximate maps are globally monotone and thus invertible. This feature lets us directly use these maps to estimate the density of $\nu$ using the change-of-variables formula, as described in Section~\ref{subsec:KL-error-bounds}.
 

\begin{figure}[!htb]
    \centering
    \begin{subfigure}[b]{0.48\textwidth}
        \centering
        \includegraphics[width=\linewidth]{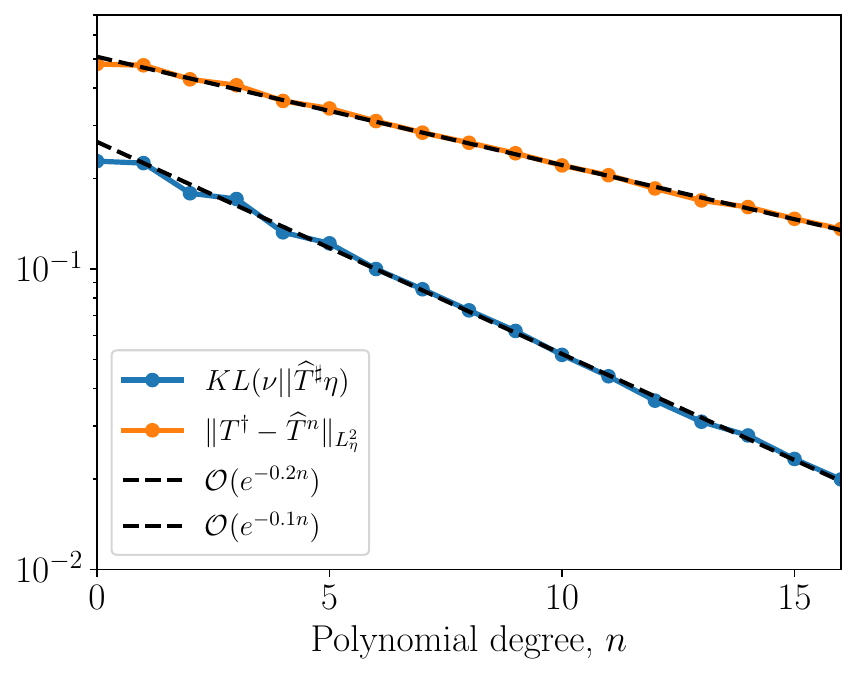}
        \caption{\label{fig:tmapprox-gumbel_KL}}
    \end{subfigure}%
    \hfill
    \begin{subfigure}[b]{0.48\textwidth}
        \centering
        \includegraphics[width=0.98\linewidth]{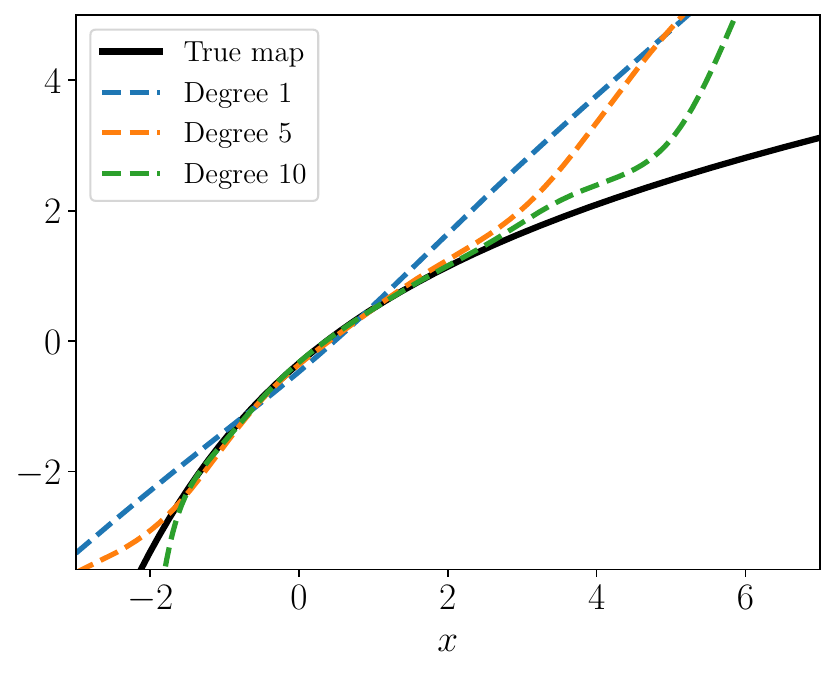}
        \caption{\label{fig:tmapprox-gumbel_L2mappiweight}}
    \end{subfigure}
    \caption{(a) Convergence of the
    pullback of a Gaussian reference $\eta$
    to the Gumbel distribution in terms of KL divergence and the $L^2$ error in the approximate map $\widehat{T}^n$ when solving~\eqref{eq:KLminimization_GaussianRef_obj}. (b) The true ($T^\dagger$) and approximate ($\widehat{T}^n$) maps found with the monotone parameterization in Definition~\ref{defn:monotone_triangular_maps}, using Hermite functions of increasing polynomial degree.}
\end{figure}

\section{Conclusions}\label{sec:conclusions}

We have proposed a general framework to obtain a priori error bounds for the approximation of probability distributions via transport, with respect to various metrics and divergences. 
Our main result, Theorem~\ref{thm:abstract-error}, provides a strategy to obtain error rates between a target measure and its approximation via 
a numerically constructed transport map. Our strategy combines
the stability analysis of statistical divergences with regularity results for a ground 
truth map and off-the-shelf approximation rates for high-dimensional functions. We highlight that stability is often the question that requires new development, while regularity and approximation rates can be addressed in many cases using existing results in the literature. 
To this end, we have presented new stability results for the Wasserstein, MMD, and KL divergences, since these are some of the most popular choices in practice. Our numerical experiments demonstrate the sharpness of our analysis and investigate its 
validity in more general settings, beyond our theoretical assumptions. 

Overall, our theoretical results take a step towards understanding the approximation error of transport-based sampling and density estimation algorithms. At the same time, the present analysis suggests an extensive list of open questions for future research:

\begin{itemize}

    \item Developing stability results for other 
    families of divergences that are popular in practice, for example, the 
    $f$-divergences of \cite{birrell2022f}, the Jensen-Shannon entropy \cite{menendez1997jensen}, or even 
    functionals such as the evidence lower bound (ELBO) \cite{blei2017variational} and the entropy-regularized optimal transport cost \cite{jordan1998variational, pal2019difference, peyre2019computational}.

   \item Combining the approximation theory framework of this paper with statistical consistency and sample complexity results, in settings where maps are estimated from empirical data (without knowledge of the true underlying density).
   \rev{Here either the target measure $\nu$ is given by samples and $\eta$ is known (in NFs), or both the target $\nu$ and the reference $\eta$ are given through samples (in GANs). Then,  
   the goal of the sample complexity analysis is to obtain error bounds on 
   $D(\widehat{T}_\sharp \eta, \nu)$ or $\| \widehat{T} - T^\dagger \|_{\mmT}$ in terms 
   of the number of samples from $\nu$ and $\eta$.}
   Analysis of the resulting statistical errors is a topic of great interest in the literature; see, for example, \cite{divol2022optimal, hutter2021minimax, pooladian2021entropic} for estimating OT maps and~\cite{irons2022triangular, wang2022minimax} for triangular and other transport maps.  Obtaining sharp rates for the statistical error in the pushforward measure given different map approximation classes, under various metrics and divergences, is a major step towards a complete error analysis of transport-based generative modeling and density estimation.

    \item Obtaining sharp rates for the Wasserstein and MMD metrics relies 
    on having strong regularity results for the ground truth map $\TT$, which we 
    often take to be an OT map; recall Remark~\ref{remark:wishful-thinking}. Understanding higher-order Sobolev regularity 
    of OT or triangular maps on unbounded domains, however, remains a challenge. Once such regularity results are obtained for a certain $\TT$, then 
    we can immediately improve our rates. 

    \item Extension of our results to the case of infinite-dimensional spaces 
    is another interesting question. This setting is important, for example, to Bayesian inverse problems and to sampling the associated posterior measures on function spaces~\cite{stuart-acta-numerica}. The approximation of certain infinite-dimensional \textit{triangular} transport maps, representing measures over functions defined on {bounded} domains, has been investigated in~\cite{zech2022sparseII}.
    But approximation of infinite-dimensional transport maps in more general settings, e.g., non-triangular transformations, and for measures over functions defined on unbounded domains, is to our knowledge open. The development of computational algorithms for infinite-dimensional transport is similarly unexplored, and leads to many interesting theoretical questions. Many of the rates obtained in this article, as well as in other work
    in the literature, are dimension-dependent and so cannot be easily generalized to infinite dimensions.
\end{itemize}

\bigskip

\section{Proofs of stability results for the KL divergence}
\label{app:stability-proofs}
Below we collect the proofs of Theorems~\ref{thm:kl_compact}, \ref{thm:kl}, 
and \ref{thm:kl_pullback}. We begin with Theorem~\ref{thm:kl}, which requires 
the most technical arguments. The other proofs follow similar steps and 
we highlight their pertinent differences.








\subsection{Proof of Theorem~\ref{thm:kl}}\label{sec:proof-of-KL-thm}
For notational convenience let us write $\phi\coloneqq F_\sharp \eta$ and $\gamma \coloneqq G_\sharp \eta$.
By Assumptions \ref{as:det_fg}--\ref{as:imagefg}, $F^{-1}$ and $G^{-1}$ are well-defined $\eta$-a.e.\ on ${\rm Im}(G)\subseteq \R ^d$. Hence, by the change of variables formula $$p_{\phi} (y) = \frac{p_{\eta}(F^{-1}(y))}{|\det (J_F(F^{-1}(y)))|} \, , \qquad p_{\gamma} (y) = \frac{p_{\eta}(G^{-1}(y))}{|\det (J_G(G^{-1}(y)))|} \, ,$$
and so by definition
\begin{align*}
{\rm KL}(\phi || \gamma) &= \int\limits_{\R ^d} p_{\phi}(y) \log \left( \frac{p_{\phi}(y)}{p_{\gamma}(y)}\right) \, dy \\
&=\int\limits_{\R ^d} p_{\phi}(y) \log \left( \frac{p_{\eta}(F^{-1}(y))|\det^{-1}(J_F(F^{-1}(y)))|}{p_{\eta}(G^{-1}(y))\,|\det ^{-1}(J_G(G^{-1}(y)))|}\right) \, dy \\
&=\int\limits_{\R ^d} \frac{p_{\eta}(F^{-1}(y))}{|\det (J_F(F^{-1}(y)))|}\left[ \log(p_{\eta}(F^{-1}(y)))-\log(p_{\eta}(G^{-1}(y)))\right] \, dy  \\  &+ \int\limits_{\R ^d}\frac{p_{\eta}(F^{-1}(y))}{|\det (J_F(F^{-1}(y)))|} \left[\log |\det \left(J_G(G^{-1}(y))\right)| -\log |\det \left(J_F(F^{-1}(y))\right)|\right] \, dy \,.\label{eq:kl_int2y} 
\end{align*}
We make the change of variables $y=F(z)$, and therefore $dy=|\det (J_F (z))|\, dz$. Denoting $Q  \coloneqq G^{-1} \circ F$, we have that,
\begin{align*}
{\rm KL}(\phi || \gamma) 
&=\int\limits_{\R ^d} p_{\eta}(z) \left[ \log(p_{\eta}(z))-\log(p_{\eta}(Q(z)))\right] \, dz \numberthis \label{eq:kl_int1z} \\  &+ \int\limits_{\R ^d}p_{\eta}(z) \left[\log |\det \left(J_G(Q(z))\right)| -\log |\det \left(J_F(z)\right)|\right] \, dz \, . \numberthis \label{eq:kl_int2z}
\end{align*}
We now proceed to bound \eqref{eq:kl_int1z}--\eqref{eq:kl_int2z} from above. The following lemma will be useful for both:
\begin{lemma}\label{lem:q_unbd}
For every $z\in \R ^d$, define as above $Q  \coloneqq G^{-1} \circ F$. Then
$$ |Q(z)| \leq |z| + c_G ^{-1} |F(z)-G(z)| \, , \qquad  \eta-{\rm a.e.} \, ,$$
where $c_G$ is the $\eta$-essential infimum on the smallest eigenvalue of $J_G$, see Assumption \ref{as:loweval_Jg}. As a result, $Q\in L^2 (\R ^d; \eta)$.
\end{lemma}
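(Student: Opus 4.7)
The plan is to exploit the identity $z = G^{-1}(G(z))$ and rewrite $Q(z) - z = G^{-1}(F(z)) - G^{-1}(G(z))$, then apply the mean value inequality. Under Assumption~\ref{as:loweval_Jg} the operator norm of $J_{G^{-1}}$ is bounded by $c_G^{-1}$ $\eta$-a.e., which will supply the desired constant.

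More concretely, I would first invoke the inverse function theorem: by Assumption~\ref{as:c1}, $G$ is $C^2$ $\eta$-a.e., and by Assumption~\ref{as:det_fg} its Jacobian determinant is bounded away from zero, so $G^{-1}$ is $C^1$ on $G(\R^d)$ with $J_{G^{-1}}(y) = [J_G(G^{-1}(y))]^{-1}$. Since the operator norm of the inverse of an invertible matrix equals the reciprocal of its smallest singular value, Assumption~\ref{as:loweval_Jg} yields $\|J_{G^{-1}}(y)\|_{\mathrm{op}} \leq c_G^{-1}$ everywhere on $G(\R^d)$. I would then apply the fundamental theorem of calculus along the line segment from $G(z)$ to $F(z)$ to obtain
\begin{equation*}
Q(z) - z \;=\; \int_0^1 J_{G^{-1}}\bigl(G(z) + t(F(z) - G(z))\bigr)\bigl(F(z) - G(z)\bigr)\, dt,
\end{equation*}
take Euclidean norms, apply the Jacobian bound pointwise in $t$, and then use the triangle inequality to conclude. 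For the $L^2_\eta$ statement, squaring the pointwise bound and using $(a+b)^2 \leq 2a^2 + 2b^2$ gives
\begin{equation*}
\|Q\|_{L^2_\eta}^2 \;\leq\; 2 \int_{\R^d}|z|^2\, d\eta(z) \;+\; 2 c_G^{-2} \|F - G\|_{L^2_\eta}^2 \;=\; 2d + 2 c_G^{-2} \|F - G\|_{L^2_\eta}^2,
\end{equation*}
which is finite since $F, G \in H^2_\eta \subseteq L^2_\eta$ by Assumption~\ref{as:c1}.

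The main obstacle will be justifying that the segment $\{G(z) + t(F(z) - G(z)) : t \in [0,1]\}$ lies inside $G(\R^d)$ so that $G^{-1}$ is defined and differentiable along it, which is needed to write the fundamental-theorem-of-calculus expression above. If $G(\R^d) = \R^d$ there is nothing to check; otherwise one fallback is to parameterize the path in the domain, writing $F(z) - G(z) = G(Q(z)) - G(z) = \int_0^1 J_G((1-t)z + tQ(z))(Q(z) - z)\, dt$ and then extracting the $c_G^{-1}$ constant from $\sigma_{\min}$ of the averaged Jacobian by a convexity argument. Since the lemma's conclusion is only claimed $\eta$-a.e., pathological measure-zero issues with the image of $G$ can be absorbed without affecting the final bound.
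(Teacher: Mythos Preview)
Your proposal is correct and close to the paper's argument. The paper takes what you call the fallback route: it applies the mean value theorem to $G$ on the \emph{domain} segment between $z$ and $Q(z)$, writing $G(Q)-G(z)=J_G(x^*)(Q-z)$ for some $x^*$ on that segment; since $G(Q)=F(z)$ this gives $Q(z)-z=J_G^{-1}(x^*)(F(z)-G(z))$ and hence $|Q(z)-z|\le c_G^{-1}|F(z)-G(z)|$. Your primary approach---applying the fundamental theorem of calculus to $G^{-1}$ along the \emph{image} segment---is a minor variant that is arguably cleaner, since the pointwise bound $\|J_{G^{-1}}\|_{\mathrm{op}}\le c_G^{-1}$ passes directly through the integral (whereas the paper's pointwise vector-valued mean-value identity is informal, and your own ``convexity'' fix for the averaged Jacobian is also delicate because $\sigma_{\min}$ is not concave on matrices). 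The trade-off, which you correctly flag, is that the image segment must lie in $G(\R^d)$. Either way the $L^2_\eta$ conclusion follows exactly as you wrote.
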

\begin{proof}
Note that $Q(z)-z= F^{-1}(y)-G^{-1}(y)$. By assumption \ref{as:c1}, Lagrange mean value theorem implies that there exists $x^* = x^*(z)\in \R^d$ on the line segment between $z$ and $Q=Q(z)$ such that 
$$
G(Q)-G(z) = J_G (x^*) (Q-z) \, .
$$
On the other hand, by the definitions $y=F(z)$ and $Q(z) = G^{-1}(F(z))$ and since both $F$ and $G$ are bijective (Assumptions \ref{as:imagefg} and \ref{as:det_fg}), then $G(Q)=y=F(z)$. Therefore
\begin{equation}\label{eq:zqz_gfz}
Q(z)-z = J_G^{-1}(x^*(z))(F(z)-G(z)) \, .
\end{equation}
For any square matrix $A$, denote its induced operator $\ell ^2$ norm by $|A|_{\ell ^2}$. Then by definition of the operator norm, and using \eqref{eq:zqz_gfz}, we have that for every $z\in \R ^d$
\begin{align*}
|Q(z)-z| &\leq | J_G ^{-1}(x^* (z))|_{\ell ^2} \cdot |F(z)-G(z)| \\
&\leq  {\rm ess} \sup_{z\in \R^d}| J_G ^{-1}(z)|_{\ell ^2} \cdot |F(z)-G(z)| \\
&\leq c_G^{-1} \cdot |F(z)-G(z)| \, , \numberthis \label{eq:qz_ubd}
\end{align*}
where we have used Assumption \ref{as:loweval_Jg} in the following way: By assumption, the smallest singular value of $J_G$ is bounded from below by $c_G>0$, and so the spectral radius of $J_G ^{-1}$ is bounded from above by $c_G ^{-1}$ $\eta$-a.e. Since $| A |_{\ell ^2} $ is also the spectral radius, this yields the $\eta$-a.e.\ upper bound on $|Q(z)|$. Since $F,G \in L^2(\R ^d ;\eta)$, this also implies that $Q(z)-z\in L^2(\R ^d ;\eta)$, and since $z\in L^2(\R ^d; \eta)$ (by direct computation for a Gaussian $\eta$), then $Q$ is square integrable as well. 
\end{proof}
\textbf{Upper bound on \eqref{eq:kl_int1z}.} Since $p_{\eta}(z) = (2\pi)^{-d/2}\exp(-\|z\|_2^2/2)$, we have
\begin{align*}
\eqref{eq:kl_int1z} &= \int\limits_{\R ^d} p_{\eta}(z) \left[ \log\left(p_{\eta}(z)\right)-\log\left(p_{\eta}(Q(z))\right)\right] \, \dd z \\
&= \int\limits_{\R ^d} p_{\eta}(z) \left[ \log\left(\frac{\exp(-\|z\|_2^2/2)}{(2\pi)^{d/2}}\right)-\log\left(\frac{\exp(-\|Q(z)\|_2^2/2)}{(2\pi)^{d/2}}\right)\right] \, \dd z \\
&= \frac12 \int\limits_{\R ^d}p_{\eta}(z) (\|Q(z)\|_2^2-\|z\|_2^2) \, \dd z \\
&= \frac12 \int\limits_{\R ^d}p_{\eta}(z) (Q(z)-z)^\top(Q(z)+z) \, \dd z \\
&\leq \frac12 \|Q(z)-z\|_{L^2(\R^d;\eta)}\cdot \|Q(z)+z\|_{L^2(\R ^d ; \eta)} \, ,
\end{align*}
where we have used the Cauchy-Schwartz inequality in $L^2(\R^d;\eta)$. By Lemma~\ref{lem:q_unbd}, we have that \begin{align*}
    \eqref{eq:kl_int1z} &\leq \frac{\|z\|_{L^2(\R ^d ;\eta)} + \|G^{-1}\circ F\|_{L^2(\R ^d ;\eta)}}{2c_G}\|F-G\|_{L^2(\R ^d;\eta)} \\
    &\leq \frac{2c_{G}\|z\|_{L^2(\R ^d ;\eta)} + \|F-G\|_{L^2(\R ^d ;\eta)}}{2c_G^2}\|F-G\|_{L^2(\R ^d;\eta)}\, . \numberthis \label{eq:kl_int1z_bd}
\end{align*}

Note that, taking a sequence $G_n \to F$ in $L^2$, the numerator of the fraction does not vanish because of the $\|z\|_2^2$ term.

\textbf{Upper bound on \eqref{eq:kl_int2z}.} By Assumption \ref{as:c1}, $|\det (J_F)|, |\det(J_G)|>c>0$ $\eta$-a.e. Since $\log$ is Lipschitz on the interval $[c,\infty)$ with Lipschitz constant $c^{-1}$, we have that
\begin{align*}
\eqref{eq:kl_int2z}&\leq c^{-1}\int\limits_{\R ^d}p_{\eta}(z) \left| |\det \left(J_G(Q(z))\right)| - |\det \left(J_F(z)\right)|\right| \, dz \\
&  \leq c^{-1}\left[ {\rm I} + {\rm II} \right] \, , \\
{\rm where}\qquad &{\rm I}  \coloneqq \int\limits_{\R ^d}p_{\eta}(z) \left| |\det \left(J_G(Q(z))\right)| - |\det \left(J_G(z)\right)|\right| \, dz \numberthis \label{eq:Jg_i} \\
&{\rm II}  \coloneqq \int\limits_{\R ^d}p_{\eta}(z) \left| |\det \left(J_G(z)\right)| - |\det \left(J_F(z)\right)|\right| \, dz \, , \numberthis \label{eq:Jg_ii}
\end{align*}
where we have simply added and subtracted $|\det \left(J_G(z)\right)|$ to the integrand and used the triangle inequality. 

We will first bound the integral ${\rm I}$, see \eqref{eq:Jg_i}. Denote $r(\cdot)={\rm det}(J_G(\cdot))$. By Assumption~\ref{as:c1}, $G\in C^2_{\rm loc}$ and therefore $r\in C^1_{\rm loc}$ (all in the sense of $\eta$-a.e.). Hence, for every $z\in \R^d$ there exists $\zeta(z)\in \R ^d$ such that
\begin{align*}
 \int\limits_{\R ^d}p_{\eta}(z) \left| |\det \left(J_G(Q(z))\right)| - |\det \left(J_G(z)\right)|\right| \, dz  &= \int\limits_{\R^d} p_{\eta}(z) \nabla r (\zeta (z))  \cdot (Q(z)-z) \, dz\\
 &\leq \|\nabla r\circ  \zeta \|_{L^2(\R^d;\eta)} \cdot \|Q(z)-z\|_{L^2(\R^d;\eta)} \\
 &\leq \|\nabla r\circ  \zeta \|_{L^2(\R^d;\eta)} \cdot  c^{-1}\|F-G\|_{H^1(\R^d;\eta)}\,,
 \end{align*}
 where we have used Cauchy-Schwartz inequality in $L^2(\R^d ;\eta)$ and Lemma \ref{lem:q_unbd}.
 
To complete the bound on \eqref{eq:Jg_i}, it remains to show that  $\nabla r \circ \zeta \in L^2 (\R ^d ; \eta )$.  Note that by Assumption~\ref{as:tails}, then $\nabla r \in L^2(\R^d ;\eta)$. Next, for almost all $z\in \R ^d$, $\zeta(z)$ lies on the line segment between $z$ and $Q (z)$, i.e., $|\zeta (z)|\leq \max \{ |z|, |Q(z)| \}$. If  $ |Q(z)|\leq   |z|$ as $|z|\to \infty$, then clearly $\nabla r \circ \zeta \in L^2 (\R ^d ; \eta )$. Otherwise, we need to analyze $Q(z)$ as $|z|\to \infty$, which is given (see Lemma \ref{lem:q_unbd}) by 
$$|Q(z)| \leq z+ c_G ^{-1} (F(z)-G(z)) \, . $$
We now see the role of Assumption \ref{as:tails}: Since $\nabla r = \nabla {\rm det}J_G$, the polynomial asymptotic growth of $G$ and its first and second derivative means that $|\nabla r (z)|$ has polynomial growth as well. Hence, the composition of two polynomially-bounded functions is also polynomially bounded, and it is in $L^2(\R ^d; \eta)$.
 
We have now bouded \eqref{eq:Jg_i} from above. To complete the proof of Theorem \ref{thm:kl}, it remains to bound integral ${\rm II}$ (see \eqref{eq:Jg_ii}) from above. To do this, we provide the following lemma bounding the $L^1$ difference between the Jacobian determinants by the Sobolev distance between the functions.
\begin{lemma} \label{lem:determinant_bound} For two maps $F,G$ in $H^{1} \cap W^{1,2}$, the difference of the integrated Jacobian-determinants is bounded by
$$\int\limits_{\R ^d}p_{\eta}(z) | |{\rm det} (J_G(z)) | - |{\rm det} (J_F(z)) | | \, dz \leq C \|F - G\|_{H^1(\R^d; \eta)}\, \, ,$$
where $C =d\cdot  \,  \| \max \{ |J_F|_{\ell ^2} , |J_G|_{\ell ^2} \}\|_{L^{2(d-1)}(\R ^d; \eta)}^{d-1}$.
\end{lemma}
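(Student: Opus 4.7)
The plan is to combine a determinantal perturbation inequality (the Ipsen--Rehman bound alluded to earlier in the paper) with a H\"older inequality to split the resulting integral into a factor controlling the size of the Jacobians and a factor controlling their difference in $L^2_\eta$.

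First I would recall the Ipsen--Rehman inequality, which states that for any two matrices $A, B \in \R^{d\times d}$,
\begin{equation*}
|\det(A) - \det(B)| \leq d \, \max\{|A|_{\ell^2}, |B|_{\ell^2}\}^{d-1} \, |A-B|_{\ell^2}.
\end{equation*}
Applying this pointwise with $A = J_F(z)$ and $B = J_G(z)$, and using $||x|-|y||\leq |x-y|$ to absorb the absolute values, I obtain
\begin{equation*}
||\det J_G(z)| - |\det J_F(z)|| \leq d\, M(z)^{d-1} |J_F(z) - J_G(z)|_{\ell^2},
\end{equation*}
where $M(z) \coloneqq \max\{|J_F(z)|_{\ell^2}, |J_G(z)|_{\ell^2}\}$.

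Next, I would integrate this inequality against $p_\eta(z)\,dz$ and apply the Cauchy--Schwarz inequality in $L^2_\eta$ with the splitting $M^{d-1}$ versus $|J_F - J_G|_{\ell^2}$:
\begin{equation*}
\int_{\R^d} p_\eta(z)\, M(z)^{d-1} |J_F(z)-J_G(z)|_{\ell^2}\,dz \leq \|M^{d-1}\|_{L^2_\eta} \cdot \|J_F - J_G\|_{L^2_\eta}.
\end{equation*}
Noting that $\|M^{d-1}\|_{L^2_\eta} = \|M\|_{L^{2(d-1)}_\eta}^{d-1}$ yields exactly the constant $C$ in the statement. Finally, since $|J_F - J_G|_{\ell^2}$ is (pointwise) controlled by the Euclidean norm of the gradient of $F-G$, one has $\|J_F-J_G\|_{L^2_\eta} \leq \|F-G\|_{H^1_\eta}$, completing the bound.

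The only genuine subtlety is the applicability of the Ipsen--Rehman bound together with the finiteness of $\|M\|_{L^{2(d-1)}_\eta}$; the latter is exactly what is ensured by the polynomial-growth hypothesis \ref{as:tails} on $F$, $G$ and their first derivatives in Theorem~\ref{thm:kl}, so the $L^{2(d-1)}_\eta$ moment is finite. Beyond this book-keeping there is no real obstacle: the argument is a single application of a matrix inequality followed by H\"older.
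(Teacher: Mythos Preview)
Your proposal is correct and mirrors the paper's proof essentially step for step: the Ipsen--Rehman inequality applied pointwise, followed by Cauchy--Schwarz in $L^2_\eta$ to separate $\|M\|_{L^{2(d-1)}_\eta}^{d-1}$ from $\| |J_F-J_G|_{\ell^2}\|_{L^2_\eta}$, and then the Frobenius-norm bound $|A|_{\ell^2}\le |A|_F$ to pass to $\|F-G\|_{H^1_\eta}$. Your explicit use of the reverse triangle inequality for the absolute values is a minor clarification the paper leaves implicit.
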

\begin{proof}We first recall the following matrix norm inequality due to Ipsen and Rehman \cite[Theorem 2.12]{ipsen2008matrix}: for any two complex $d\times d$ matrices $A$ and $B$, then 
\begin{equation}\label{eq:ipsen}
    \left| {\rm det}(A) - {\rm det}(B) \right| \leq d \cdot |B-A|_{\ell ^2} \cdot \max \{ |A|_{\ell ^2} , |B|_{\ell ^2} \}^{d-1} \, ,
\end{equation}
where $|\cdot |_{\ell ^2}$ is the induced $\ell ^2$ norm, i.e., the spectral radius. Hence, using Cauchy-Schwartz inequality
\begin{align*}
    \int\limits_{\R ^d}p_{\eta}(z) | &|{\rm det} (J_G(z)) | - |{\rm det} (J_F(z)) | | \, dz  \\
    &\leq  d \cdot  \int\limits_{\R ^d}p_{\eta}(z)  |J_G (z)- J_F (z) |_{\ell ^2} \cdot \max \{ |J_F (z)|_{\ell ^2}, |J_G (z)|_{\ell ^2} \}^{d-1}  \, dz \\
    &\leq  d\cdot  \| |J_F - J_G |_{\ell ^2} \|_{L^2(\R ^d ;\eta)} \cdot\| \,  \, \max \{ |J_F|_{\ell ^2} , |J_G|_{\ell ^2} \}\|_{L^{2(d-1)}(\R ^d; \eta)}^{d-1} \, .
\end{align*}
By an upper bound with the Frobenius norm $|A|_{\ell ^2} \leq |A|_F$ \cite{golub2013matrix}, then for a.e.\ $z\in \R ^d$
$$|J_F (z)- J_G (z) |_{\ell ^2}^2 \leq \sum\limits_{i,j=1}^d |\partial_i F_j (z) - \partial_i G_j (z) |^2 \, .$$
Hence $\left\| |J_F (z)- J_G (z) |_{\ell ^2} \right\|_{L^2(\R^d ;\eta)} = \left\| F-G\right\|_{\dot{H}^1 (\R^d; \eta)} \leq \left\| F-G\right\|_{H^1 (\R^d; \eta)} \, ,$ 
where $\|\cdot\|_{\dot{H}^1 (\R^d; \eta)}$ denotes the weighted homogeneous Sobolev norm of order 1. Similarly, $|J_F|_{\ell ^2}$ and $|J_G|_{\ell ^2}$ can be bounded from above  by the Frobenius norm, and so
$${\rm II} ~~ \leq  d \cdot \|F-G\|_{H^1} \cdot (\|F\|_{W^{1,2(d-1)}(\R^d;\eta)}^{2(d-1)} + \|G\|_{W^{1,2(d-1)}(\R^d;\eta)}^{2(d-1)})^{1/2} \,.$$
\end{proof}

To complete the bound on~\eqref{eq:Jg_ii}, we observe that the tail condition, Assumption \ref{as:tails}, also implies that $F,G\in W^{1,2(d-1)}(\R ^d; \eta)$, since $2(d-1)\geq 2$ for $d\geq 2$ and $F,G \in C^1_{\rm loc}$.

Finally, collecting the upper bounds on \eqref{eq:kl_int1z}, \eqref{eq:kl_int2z} (which decomposes into $\eqref{eq:Jg_i}$ and \eqref{eq:Jg_ii}), we have that there exists a constant $K>0$ depending on norms of $F$, $G$, and the dimension (but not on norms of $F-G$), such that:
 $\KL(\phi||\gamma) \leq K \|F-G\|_{H^1(\R ^d; \eta)} \, .$

\subsection{Proof of Theorem~\ref{thm:kl_compact}}\label{sec:compact_kl_pf}

The proof of the compact case simplifies that of the unbounded case considerably: it is still the case that ${\rm KL}(\phi || \gamma )$ decomposes into the integrals \eqref{eq:kl_int1z} and \eqref{eq:kl_int2z}, where the integrals are over the compact domain $\Omega$. 

As in the unbounded case, we have that $|Q(z)-z|\leq c_G ^{-1}|F(z)-G(z)|$ by Lagrange mean value theorem  $\eta$-a.e. That $Q\in L^2$ now simply follows from continuity of $F$ and $G$.

{\bf Upper bound on \eqref{eq:kl_int1z}.}
Since we assumed that $p_{\eta} , c_{\eta }>0$, we can use the fact that $\log$ is a Lipschitz function on $(c_{\eta},\infty)$ with Lipschitz constant $c_{\eta}^{-1}$. Combined with the Lipschitz property of $p_{\eta}$ (Assumption \ref{as:eta_Lip_compact}), we have that
\begin{align*}
\left|\int\limits_{\Omega} p_{\eta}(z) \left[ \log\left(p_{\eta}(z)\right)-\log\left(p_{\eta}(Q(z))\right)\right] \, dz \right| &\leq \int\limits_{\Omega} p_{\eta}(z) \frac{L_{\eta}}{c_{\eta}}|Q(z)-z| \, dz \\
&= \int\limits_{\Omega} p_{\eta}(z) \frac{L_{\eta}}{c_{\eta}}\left| J^{-1}_G (x^{\star}(z))(F(z)-G(z))\right| \, dz \\
&\leq \frac{L_{\eta}}{c_{\eta}}\left( \max_{x\in \Omega} | J_G(x)^{-1} |_{\ell ^2}\right) \int\limits_{\Omega} p_{\eta}(z) |F(z)-G(z)| \, dz \\
& = \frac{L_{\eta}}{c_{\eta}}\left( \max_{x\in \Omega} | J_G(x)^{-1} |_{\ell ^2}\right)    \cdot \|F-G\|_{L^1_{\eta}} \, ,
\end{align*}
where, $|\cdot |_{\ell ^2}$ is the matrix $\ell ^2$ norm (or the spectral radius). Since $J_G (x)$ is everywhere invertible (Assumption \ref{as:bijective_compact}), it is monotonic and so its smallest singular value is always nonnegative, and by continuity ($G\in C^2$ on a compact set $\Omega$) it is a strictly positive minimum, which we denote by $c_G>0$ (this is why we do not need to assume such a bound in the compact case, compare to Theorem \ref{thm:kl}).
Hence $|J_G (x)^{-1}|_{\ell^2}< c_G ^{-1}$, and so 
\begin{equation}\label{eq:compactKL1}\left|\int\limits_{\Omega} p_{\eta}(z) \left[ \log\left(p_{\eta}(z)\right)-\log\left(p_{\eta}(Q(z))\right)\right] \, dz \right| \leq \frac{L_{\eta}}{c_{\eta}\cdot c_{G}}  \cdot \|F-G\|_{L^1(\R ^d; \eta )} \, .
\end{equation}

\textbf{Upper bound on \eqref{eq:kl_int2z}.} As in the unbounded case, we bound this integral from above by the sum of two integrals ${\rm I} + {\rm II}$, as defined in \eqref{eq:Jg_i} and \eqref{eq:Jg_ii}, respectively. 

To bound ${\rm I}$ from above, Denote $r \coloneqq {\rm det}(J_G)$. By assumption \ref{as:c2_compact}, $G\in C^2$ and therefore $r\in C^1$ on a compact domain, and hence has a Lipschitz constant which we denote by $L_{J_G}\geq 0$. 
\begin{align*}
 \int\limits_{\R ^d}p_{\eta}(z) \left| |\det \left(J_G(Q(z))\right)| - |\det \left(J_G(z)\right)|\right| \, dz  &\leq  L_{J_G} \int\limits_{\R^d} p_{\eta}(z) | Q(z)-z| \, dz\\
 &\leq \frac{L_{J_G}}{c_{J_G}}\|F-G\|_{L^1(\R ^d; \eta )} \, , \numberthis \label{eq:compactKL2_I}
 \end{align*}
 where the second inequality  on $\int p_{\eta}(z) |z-Q(z)| \, dz $ has already been derived above. 
 
 To bound ${\rm II}$ (see \eqref{eq:Jg_ii}), we can again use the matrix inequality \eqref{eq:ipsen} as well as the matrix norms inequality $|A|_{\ell ^2} \leq \sqrt{d} |A|_{\ell ^1}$ \cite{golub2013matrix}, to write 
 \begin{align*}
    \int\limits_{\Omega}p_{\eta}(z) | &|{\rm det} (J_G(z)) | - |{\rm det} (J_F(z)) | | \, dz  \\
    &\leq d^{\frac{d}{2}} \cdot  \int\limits_{\Omega}p_{\eta}(z)  |J_g (z)- J_F (z) |_{\ell ^1} \cdot \max \{ |J_F (z)|_{\ell ^1}, |J_G (z)|_{\ell ^1} \}^{d-1}  \, dz \\
    &\leq d^{\frac{d}{2}} \cdot  \max\limits_{x\in \Omega}\max \{ |J_F (x)|_{\ell ^1}, |J_G (x)|_{\ell^1} \}^{d-1} \cdot  \int\limits_{\Omega}p_{\eta}(z)  |J_G (z)- J_F (z) |_{\ell ^1}    \, dz \, .
\end{align*}
Since for any square matrix $|A|_{\ell ^1} = \max_{1\leq j\leq d}\sum_{i=1}^d |A_{ij}|$, then
$$|J_G (z)- J_F (z) |_{\ell ^1}  \leq \sum\limits_{i,j=1}^d |\partial_i G_j (z) - \partial_i F_j (z)|  \, , \qquad \eta-{\rm a.e.} \, , $$
and so we get that 
\begin{equation}\label{eq:compactKL2_II} \int\limits_{\Omega}p_{\eta}(z) | |{\rm det} (J_G(z)) | - |{\rm det} (J_F(z)) | | \, dz \leq d ^{\frac{d}{2}} \max \{ \|F\|_{C^1} , \|G\|_{C^1} \}^{d-1} \cdot \|F-G\|_{W^{1,1}(\R ^d ;\eta)} \, . 
\end{equation}
In all, combining the upper bounds \eqref{eq:compactKL1},  \eqref{eq:compactKL2_I}, and \eqref{eq:compactKL2_II}, we get that
\begin{equation}\label{eq:KL_compact_fullbound}
    {\rm KL} (\phi|| \gamma) \leq \left( \frac{L_{\eta}}{c_{\eta}\cdot c_G} +\frac{L_{J_G}}{c_{J_G}}\right)\cdot \|F-G\|_{L^1(\R ^d;\eta)} + d^{\frac{d}{2}} \max \{ \|F\|_{C^1}, \|G\|_{C^1} \}^{d-1} \cdot \|F-G\|_{W^{1,1}(\R^d ;\eta )} \, .
\end{equation}

\subsection{Proof of Theorem~\ref{thm:kl_pullback}} \label{app:proof_kl_pullback}
For notational convenience, let us write $\phi\coloneqq F^\sharp \eta$ and $\gamma \coloneqq G^\sharp \eta$. Under Assumption~\ref{as:Gaussiantail_pullback} on the target density, i.e., $p_\phi(x) \leq c_Fp_{\eta}(x)$ for all $x$, we can bound the KL divergence as follows 
\begin{align*}
    \KL(\phi||\gamma) = \int_{\R^d} \log \left(\frac{p_{\phi}(y)}{p_{\gamma}(y)} \right)p_{\phi}(y) \dd y \leq c_F \int_{\R^d} \log \left(\frac{p_{\phi}(y)}{p_{\gamma}(y)} \right) p_{\eta}(y) \dd y.
\end{align*}
Using the form of the pull-back densities in~\eqref{eq:pullback_density}, the integral on the right-hand side above can be decomposed into two terms
\begin{equation*}
    \int_{\R^d} \log \left(\frac{p_{\phi}(y)}{p_{\gamma}(y)} \right) p_{\eta}(y) \dd y = \underbrace{\int_{\R^d} \log \frac{p_\eta(F(y))}{p_\eta(G(y))} p_\eta(y) \dd y}_{\rm I} + \underbrace{\int_{\R^d} \log \frac{|\det J_F|}{|\det J_G|} p_\eta(y) \dd y}_{\rm II}.
\end{equation*}
Using the form of the standard Gaussian density $p_\eta$ and the Cauchy-Schwarz inequality, the term ${\rm I}$ is bounded by
\begin{align}
{\rm I} = \frac{1}{2}\int_{\R^d} (G(y)^2 - F(y)^2) p_{\eta}(y) \dd y 
&= \frac{1}{2} \int_{\R^d} (G(y) - F(y))(G(y) + F(y)) p_{\eta}(y) \dd y \nonumber \\
&\leq \frac{1}{2} \|G - F\|_{L^2(\R^d;\eta)} \|G + F \|_{L^2(\R^d;\eta)}. \label{eq:pullback_bound_I}
\end{align}

Next, we turn to bound the term ${\rm II}$. Under Assumption~\ref{as:det_pullback}, the difference of the log-determinants is a Lipschitz function with constant $1/c$. In addition, using Lemma~\ref{lem:determinant_bound} with Assumption~\ref{as:cont_pullback} on the function spaces for the maps, the term ${\rm II}$ is then bounded by
\begin{align}
II 
&\leq \frac{1}{c} \int_{\R^d} ||\det J_F| - |\det J_G|| p_{\eta}(y) \dd y \nonumber \\
&\leq \frac{d}{c} \|F - G\|_{H^1_\eta} \max\{\|F \|^{d-1}_{W^{1,2(d-1)}_\eta}, \|G \|^{d-1}_{W^{1,2(d-1)}_\eta}\} \, . \label{eq:pullback_bound_II}
\end{align} 
For more details, see the analogous passage in proving Lemma \ref{lem:determinant_bound}. Collecting the bounds in~\eqref{eq:pullback_bound_I} and~\eqref{eq:pullback_bound_II}, we have
$\KL(\phi||\gamma) \leq C\|F - G \|_{H^1_\eta}$,
where the constant $$C \coloneqq c_F(\|G + F\|_{L^2_\eta}/2 + d/c\max\{\|F \|^{d-1}_{W^{1,2(d-1)}_\eta}, \|G \|^{d-1}_{W^{1,2(d-1)}_\eta}\}) \,. $$


\section{Proofs from Section~\ref{sec:applications}}
\label{sec:application-proofs}

\subsection{Proof of Theorem~\ref{thm:nn_unbounded_approx}}
\label{sec:nn_unbounded_approx}
    Without loss of generality, assume \(\epsilon < 1\). For any \(M > 0\) and \(x \in \R^d\), define 
    \begin{equation}\label{eq:FMdef}
    F_M(x) \coloneqq \begin{cases}
    F(x) & |F(x)| \leq M \\
    \frac{M}{|F(x)|} F(x) & |F(x)| > M.
    \end{cases}
    \end{equation}
    Note that, by definition, it follows that \(\|F_M\|_{L^p_\eta} \leq \|F\|_{L^p_\eta}\) for any \(M > 0\). Clearly \(F_M \to F\) as \(M \to \infty\) pointwise hence, by dominated convergence, \(F_M \to F\) in \(L^p_\eta\).
    Therefore, we can find \(M > 1\) large enough such that
    \begin{equation}\label{eq:FmF}
    \|F_M - F\|_{L^p_\eta} < \frac{\epsilon}{2} \, .
    \end{equation}
    By Lusin's theorem \cite[Theorem 7.1.13]{bogachev1}, there exists a compact set \(K \subset \R^d\) such that 
    \begin{equation}\label{eq:etaKc}
    \eta(\R^d \setminus K) < \frac{\epsilon^p}{2^{p+2}(3^p + 1)M^p} \, .
    \end{equation}
    and \(F_M |_K\) is continuous. By \cite[Theorem 3.1]{pinkus1999approximation}, there exists a number \(n_1 = n_1(\epsilon) \in \mathbb{N}\) 
    \footnote{
    Observe that $n_1$ depends on \(K\) which in turn depends on \(\epsilon\), \(F\), and \(\eta\). In order to obtain rates we need to quantify how $n_1$ depends on $K$ but we do not need this for the purposes of this proof.}    and a
    ReLU network \(G_1: \mathbb{R}^d \to \R^m\) with \(n_1\) parameters such that
    \begin{equation}\label{eq:G1FM}
    \sup_{x \in K} |G_1(x) - F_M (x) | < 2^{-\frac{2p+1}{p}} \epsilon \, .
    \end{equation}
    By \cite[Lemma C.2]{lanthaler2022error}, there exists a number \(n_2 = n_2(\epsilon) \in \mathbb{N}\) and a 3-layer ReLU network \(G_2 : \R^m \to \R^m\)
    with \(n_2\) parameters such that
    \begin{equation}\label{eq:G2_2M}
    \sup_{|x| \leq 2M} |G_2(x) - x| < 2^{-\frac{2p+1}{p}} \epsilon , \quad \sup_{x \in \R^d} |G_2(x)| \leq 3M.
    \end{equation}
    Define \(\widehat{F} : \R^d \to \R^m\) as \(\widehat{F} = G_2 \circ G_1\) which is a 4-layer ReLU network with \(n = n_1 + n_2\) parameters.
    Notice that using \eqref{eq:FMdef} and \eqref{eq:G1FM}, we have that
    \[\sup_{x \in K} |G_1(x)| \leq \sup_{x \in K} |G_1(x) - F_M(x)| + \sup_{x \in K} |F_M(x)| \leq 2M\]
    and therefore \eqref{eq:G2_2M} can be applied to yield,
    \begin{align*}
    \sup_{x \in K} |\widehat{F}(x) - G_1(x)| &= \sup_{x\in K} |G_2 (G_1 (x))-G_1 (x)|\\
    &\leq \sup_{|x| \leq 2M} |G_2(x) - x| \\
    &< 2^{-\frac{2p+1}{p}} \epsilon \, . \numberthis \label{eq:hatF_G1}
    \end{align*}
    By combining \eqref{eq:G1FM} and \eqref{eq:hatF_G1} and the triangle inequality, we get
    \[\sup_{x \in K} |\widehat{F}(x) - F_M(x)| \leq \sup_{x \in K} |\widehat{F}(x) - G_1(x)| + \sup_{x \in K} |G_1(x) - F_M(x)| < 2^{-\frac{p+1}{p}} \epsilon \, ,\]
    and therefore, since $\eta$ is a probability measure and $K$ is a proper subset of $\R^d$, i.e., $\eta (K)<1$,
    \[\int_K |\widehat{F} - F_M|^p \: d \eta \leq \sup_{x \in K} |\widehat{F}(x) - F_M (x)|^p < \frac{\epsilon^p}{2^{p+1}}.\]
    Furthermore, By the definition of $F_M$, \eqref{eq:FMdef}, and by \eqref{eq:etaKc},
    \begin{align*}
        \int_{\R^d \setminus K} |\widehat{F} - F_M|^p \: d \eta &\leq 2 \eta(\R^d \setminus K) \left ( \sup_{x \in \R^d} |\widehat{F}(x)|^p + \sup_{x \in \R^d} |F_M(x)|^p \right ) \\
        &< \frac{\epsilon^p}{2^{p+1}(3^p + 1)M^p} (3^p M^p + M^p) = \frac{\epsilon^p}{2^{p+1}}.
    \end{align*}
    Combining the two integrals on $K$ and $\R^d \setminus K$, it follows that, $\|\widehat{F} - F_M\|_{L^p_\eta} < \frac{\epsilon}{2}.$
    Hence by using \eqref{eq:FmF} as well we finally get the desired result
    \[\|\widehat{F} - F\|_{L^p_\eta} \leq \|\widehat{F} - F_M\|_{L^p_\eta} + \|F_M  - F\|_{L^p_\eta} < \epsilon\, .\]

\subsection{Proof of Proposition~\ref{prop:application-compact-rates}}\label{proof:prop:application-compact-rates}

Each upper bound in Proposition \ref{prop:application-compact-rates} is an application of Theorem~\ref{thm:abstract-error} after verifying Assumption~\ref{ass:abstract} 
for the corresponding divergence. Throughout the proof we take $\TT$ to be the $\W_2$-optimal map 
 pushing $\eta$ to $\nu$.

Let us start with item 1. By \eqref{eq:locReg} we have $\TT \in H^{k+1}_{\eta}(\Omega ; \Omega)$. Hence, the $L^2$ approximation error is obtained from \eqref{eq:canuto} with $s = 0$ and $t = k+1$, which reads as
 \begin{equation}\label{eq:leg_aprox} \dist_{L^2_{\eta}(\Omega;\Omega)} \rev{(\TT, \widehat \mmT^n)} \leq 
 Cn^{-k-3/2} \|  \TT \|_{H^{k+1}_{\eta}(\Omega, \Omega)} \, ,
 \end{equation}
where $C>0$ depends on $d$ and $\TT$, but not on $n$. The reference measure satisfies 
$\eta \in \PP^p (\Omega)$, since $p_{\eta}\in C^0(\Omega)$ and $\Omega$ is compact. Combined with 
$\hT, \TT\in L^2_\eta(\Omega)$,
 we obtain stability by Theorem~\ref{lem:wplp}. An application of Theorem~\ref{thm:abstract-error} then 
yields the result.

 Next we consider item 2. The approximation-theoretic bound \eqref{eq:leg_aprox} remains applicable so we only need to 
verify the stability of MMD, i.e., that the hypotheses of Theorem~\ref{thm:MMD-stability} are satisfied. Let $\psi(x) = \kappa(x - \cdot)$, the canonical feature map of the 
Gaussian kernel $\kappa$. By Remark~\ref{rem:mmd_hyp}
it follows that
 $   \| \psi(x) - \psi(y) \|_\mK^2 \le L^2 |x - y|^2,$
for some constant $L(\gamma) >0$. Thus, $\MMD_\kappa$ satisfies 
the conditions of Theorem~\ref{thm:MMD-stability}  and the desired result then 
follows analogously to item 1.

It remains to prove item 3, concerning the KL-divergence, which is significantly more technical.
First, we need to verify that the minimizer $\hT$ \eqref{eq:abst_opt_theta} exists, i.e., that there exists a polynomial \rev{$S\in \widehat \mmT^n$} such that ${\rm KL}(S_{\sharp}\eta || \nu) < +\infty$. The issue here is image mismatch: for an arbitrary polynomial \rev{ $S\in \widehat \mmT^n$}, it may  be that $|S(\Omega)\setminus \Omega|>0$, and so by definition the ${\rm KL}$ distance is $+\infty$. However, since \rev{$\widehat \mmT^n$} is a vector space, we can divide 
such a map $S$ by a sufficiently large number $a>0$ such that $(a^{-1} S)(\Omega)\subseteq \Omega$, which would lead to ${\rm KL}((a^{-1}S)_{\sharp}\eta  \| \nu) < +\infty$. Hence, a  minimizer $\hT$ exists.

To prove the error rate, we make a small variation on our usual strategy. Note that by  \eqref{eq:abst_opt_theta} it holds that,
\begin{equation}\label{eq:kl_compact_ineq_gen}
{\rm KL}(\hat{\nu}^n  \| \nu)= {\rm KL}(\hat{\nu}^n \| \TT _{\sharp} \eta) \leq {\rm KL }(S _{\sharp} \eta \| \TT _{\sharp} \eta ) \, , \qquad \forall \,\rev{S\in \widehat \mmT^n } \, .
\end{equation}
We will therefore apply Theorem \ref{thm:kl_compact} to $\TT$ and a judiciously constructed $S$.

To choose the proper $S$ in \eqref{eq:kl_compact_ineq_gen}, first, denote as before by $\pi_n \TT$ 
the $L^2$ projection of $\TT$ onto \rev{$\widehat \mmT^n$}. Unfortunately, even though $\hT(\Omega)\subseteq \Omega$, we cannot guarantee that $\pi_n \TT(\Omega) \subseteq \Omega$. Define  a renormalized version  
\begin{equation}\label{eq:psibardef}
    \varphi ^n(x) \coloneqq \frac{1}{c_n} \pi_n \TT (x) \, ,  \qquad  c_n \coloneqq \max \large\{1 \, , ~ \max\limits_{j=1,\ldots ,d}\max\limits_{x\in \Omega} |(\pi _n \TT)_j (x)|  \large\} \, .
\end{equation}
Observe that if  $\pi _n \TT(\Omega)\subseteq \Omega$ then $c_n =1$ and $\varphi ^n = \pi_n \TT$. Otherwise, $c_n>1$ and we have made sure, by definition, that $\varphi^n(\Omega)\subseteq \Omega$. Hence, since we assumed that the target density $p_{\nu} >0$ a.e.\ in $\Omega$, we have that ${\rm KL}(\varphi ^n_{\sharp}\eta \| \nu) < +\infty$.

We now wish to apply the stability result, Theorem \ref{thm:kl_compact}, to the right-hand side of \eqref{eq:kl_compact_ineq_gen} with $S=\varphi ^n$. Let us verify that the assumptions are being satisfied, with $F =\varphi ^n$ and $G=\TT$:
\begin{itemize}
    \item Assumption \ref{as:c2_compact} on local regularity is satisfied since $\varphi ^n$ is a polynomial, and $\TT$ is the $\W_2$-optimal map, which satisfies the regularity result \eqref{eq:locReg}. 
    \item To show that $\TT$ is injective, we note that it solves the Monge-Ampere equation $${\rm det}(J_{\TT}(x))=\frac{p_{\eta}(x)}{p_{\mu}(\TT(x))} \, , \qquad x\in \Omega \, .$$
    Since both $p_{\eta}, p_{\mu}$ are strictly positive, we have that $J_{\TT}$ is everywhere invertible on $\Omega$, and therefore $\TT$ is injective. Since $\varphi ^n(\Omega) \subseteq \Omega =\TT(\Omega)$, then $\varphi ^n$
    is invertible on $\varphi ^n(\Omega)$.

    To show that Assumption \ref{as:bijective_compact} is satisfied, we still need to show that $\varphi ^n$ is injective. Since $\varphi ^n$ is a constant rescaling of $\pi_n \TT$, it is sufficient to show that $\pi_n \TT$ is monotonic.  Since $\TT$ is invertible on a compact domain, $|{\rm det} J_{\TT}|>c >0 $ on $\Omega$ for some $c>0$. Below in Lemma \ref{lem:C12-bound-of-poly-maps} we show that $T^n$ converges to $\TT$ in $C^1$ and so in particular $\|{\rm det}J_{\pi_n \TT} - {\rm det} J_{\TT} \|_{\infty} \to 0$ as $n\to \infty$. Hence, for sufficiently large $n$ we have that $|{\rm det}J_{\pi_n \TT}|>c/2 >0$ and therefore $\pi_n \TT$ is injective.

\item Assumptions \ref{as:eta_pos_compact} and \ref{as:eta_Lip_compact} only concern the reference measure $\eta$ and are readily satisfied by 
the hypotheses of the theorem.
\end{itemize}

We can therefore apply Theorem \ref{thm:kl_compact} to \eqref{eq:kl_compact_ineq_gen} and obtain 
\begin{equation}
    {\rm KL}(\hat{\nu}^n \|\nu) \leq C \|\varphi ^n - \TT \|_{W_\eta^{1,1}(\Omega; \Omega)} \leq C \|\varphi ^n - \TT \|_{ H_\eta^{1}(\Omega;\Omega)} \, ,
\end{equation}
where the constant $C$ changes between the two inequalities. First note that, by Theorem \ref{thm:kl_compact}, the constant $C$ only depends on $\eta$ and on $G=\TT$, and hence is uniform in $n$. It remains for us to derive an upper bound for $\|\varphi ^n - \TT \|_{H_\eta^{1}(\Omega;\Omega)}$. Using the definition of $\varphi^n$, \eqref{eq:psibardef}, and the triangle inequality, 
\begin{align*}
     \|\varphi ^n - \TT \|_{H_\eta^{1}} &= \|c_n^{-1}\pi_n \TT -\TT\|_{H_\eta^1} \\
     &\leq c_n^{-1} \|\pi_n \TT -\TT\|_{H_\eta^1} + |1-c_n^{-1}| \cdot \|\TT\|_{H_\eta^1} \, .
     \end{align*}

Recall now that by definition \eqref{eq:psibardef}, $c_n \ge 1$, and it quantifies the image mismatch between $T^n (\Omega)$ and $\Omega$. Since $\TT(\Omega)=\Omega$ we can relate the constant $c_n$ to the $L^{\infty}$ distance between $\TT$ and its polynomial approximation $\pi_n \TT$. Choose any $x\in \Omega$. Then by definition \eqref{eq:psibardef}, in the worst case it is mapped $c_n$ away from $[-1,1]$ in each coordinate. On the other hand, since $\TT (x)\in \Omega$, the furthest away that $\pi_n \TT (x)$ can be from $\Omega$ is represented by the case where $\TT (x)$ is a corner  of $\Omega$, e.g., $(1,\ldots, 1)$. Even in this worst case scenario, it is still the case that $|\pi_n \TT (x) - \TT(x)|\leq \|\pi_n \TT - \TT \|_{\infty}$. Hence
$$ d|1-c_n^{-1}|^2 \leq \|\TT- \pi_n \TT\|_{\infty}^2 \, .$$
This is a loose bound but it is sufficient  for our needs. Since
$c_n \geq 1$ we obtain
\begin{equation}
    \| \varphi ^n - \TT \|_{H_\eta^{1}}  \leq \|\pi_n \TT -\TT\|_{H_\eta^1} +  d^{-\frac12} \|\pi_n \TT - \TT\|_{\infty} \cdot \|\TT\|_{H_\eta^1} \, . 
\end{equation}
An upper bound on the first  terms on the right hand side is a direct consequence of the classical approximation theory \eqref{eq:canuto}. We obtain a bound on the $L^{\infty}$ difference in Lemma~\ref{lem:C12-bound-of-poly-maps}, and so, combined with \eqref{eq:canuto} we get, 
$$  \| \varphi ^n - \TT \|_{H_\eta^{1}} \le C n^{-k+\frac12} +  n^{-k+\frac12+2\lfloor \frac{d}{2}\rfloor} \, .  $$
For sufficiently high $n$, the second term is dominant, and hence the theorem.

\begin{lemma}\label{lem:C12-bound-of-poly-maps}
Suppose the conditions of  Proposition~\ref{prop:application-compact-rates} are satisfied, 
Then for $j=0,1$
\begin{equation}\label{eq:poly_c01_rates}
\|\TT - \pi_n \TT\|_{C^j} \le C \|\TT\|_{H^{k+1}} n^{-\left( k-\frac12-2j-2\lfloor \frac{d}{2} \rfloor \right)} \, .
\end{equation}
for a positive and $n$-independent constant $C > 0$.
\end{lemma}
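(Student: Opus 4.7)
The plan is to combine the polynomial approximation estimate of Proposition~\ref{prop:CanutoQuarteroni} with a Sobolev embedding to convert a Sobolev bound on $\TT - \pi_n \TT$ into a pointwise $C^j$ bound. Since $\Omega = [-1,1]^d$ is a bounded Lipschitz domain, the standard Sobolev embedding gives
\begin{equation*}
\|f\|_{C^j(\Omega;\Omega)} \le C_{\mathrm{emb}} \, \|f\|_{H^s(\Omega;\Omega)}, \qquad \forall f \in H^s(\Omega;\Omega),
\end{equation*}
whenever $s > j + d/2$, with a constant $C_{\mathrm{emb}}$ depending only on $d$, $s$, and $j$. I will apply this to $f = \TT - \pi_n \TT$, reducing the problem to bounding $\|\TT - \pi_n \TT\|_{H^s}$.

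To hit exactly the exponent advertised in~\eqref{eq:poly_c01_rates}, I plan to choose the integer $s = j + \lfloor d/2 \rfloor + 1$, which satisfies $s > j + d/2$ for both parities of $d$ and for $j \in \{0, 1\}$. Under the standing regularity hypothesis $\TT \in H^{k+1}(\Omega;\Omega)$ (which comes from~\eqref{eq:locReg} together with $k > \tfrac{5}{2} + 2\lfloor d/2 \rfloor$ ensuring $s \le k+1$), Proposition~\ref{prop:CanutoQuarteroni} applies with $t = k+1$ and yields
\begin{equation*}
\|\TT - \pi_n \TT\|_{H^s(\Omega;\Omega)} \;\le\; C\, n^{-e(s,\, k+1)} \,\|\TT\|_{H^{k+1}(\Omega;\Omega)},
\end{equation*}
where $e(s, k+1) = (k+1) + \tfrac12 - 2s = k - \tfrac12 - 2j - 2\lfloor d/2 \rfloor$. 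Composing the two inequalities produces the desired bound with an $n$-independent constant, since neither $C_{\mathrm{emb}}$ nor the Canuto--Quarteroni constant depends on $n$.

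A minor bookkeeping point to handle is that Proposition~\ref{prop:CanutoQuarteroni} is stated with respect to the Lebesgue measure, whereas the proof of Proposition~\ref{prop:application-compact-rates} feeds $\|\TT\|_{H^{k+1}_\eta}$ into this lemma. Since $p_\eta$ is continuous and strictly positive on the compact set $\Omega$, it is bounded above and below by positive constants, so the $\eta$-weighted and unweighted Sobolev norms of any order are equivalent on $\Omega$; these constants can be folded into the final $C$. No step of this plan presents a genuine difficulty: both the embedding and the approximation estimate are off-the-shelf, and the only mildly delicate choice is the particular integer $s = j + \lfloor d/2 \rfloor + 1$, which simultaneously meets the embedding threshold and produces the claimed exponent in $n$.
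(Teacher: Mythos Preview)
Your proposal is correct and follows essentially the same route as the paper: apply the Sobolev embedding $H^s \hookrightarrow C^j$ with the integer choice $s = j + \lfloor d/2\rfloor + 1$, then invoke Proposition~\ref{prop:CanutoQuarteroni} with $t = k+1$ to obtain exactly the exponent $k - \tfrac12 - 2j - 2\lfloor d/2\rfloor$. Your remark on the equivalence of the $\eta$-weighted and unweighted Sobolev norms on the compact $\Omega$ is a useful clarification that the paper leaves implicit.
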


\begin{proof}
Recall the Sobolev-Morrey embedding theorem \cite[Ch.~5, Thm.~6]{evans2010partial} stating that 
$C^{s-\lfloor \frac{d}{2}\rfloor-1}(\Omega) \subset H^s(\Omega)$  if $s > d/2$.
Fix $j=0,1$, and choose $s_j =1+j+\lfloor d/2 \rfloor$. Then
$$\|\TT - \pi_n \TT\|_{C^j} \lesssim \|\TT-\hT\|_{H^{s_j}(\Omega;\R^d)} \, .$$
Then, to apply the approximation theory result \eqref{eq:canuto}, we recall that, by the hypotheses of this lemma, and \eqref{eq:locReg}, $\TT\in H_\eta^{k+1}(\Omega; \Omega)$, and so \eqref{eq:poly_c01_rates} is obtained. Finally, we note here that for $C^1$ convergence, we need to require that $e(s_1,k+1)>0$ (using the notation of \eqref{eq:canuto}), i.e., that $k>5/2 +2\lfloor d/2 \rfloor $.
\end{proof}

\subsection{An abstract theorem for backward transport problem} \label{app:abstract-error-backward-transport}
To derive analogous error bounds to those for pushforward measures in Section~\ref{sec:applications}, we first present an abstract result to bound the KL divergence between a target measure and an approximate pullback measure. 
An application of this theorem to derive convergence results for an increasing class of monotone and triangular maps is presented in Section~\ref{subsec:pullback_KR}. The {\em abstract} framework is a direct analog of the pushforward-based Theorem~\ref{thm:abstract-error}.

\begin{assumption} \label{as:backward_transport} For measures $\nu,\eta \in \mathbb{P}(\R)$, let the following conditions hold:
\begin{enumerate}[label=(\roman*)]
    \item \it{(Stability)} there exists a constant $C > 0$ so that for any set f invertible maps \rev{$F,G \in \mathcal{T}$} it holds that $$\KL(F^\sharp\eta||G^\sharp\eta) \leq 
    \rev{C\|F - G\|, \quad \forall F,G \in \mathcal{T} \, ,}$$
    \rev{for some norm $\|\cdot \|$ of an ambient space containing $\mathcal{T}$.}
    \item \it{(Feasibility)} There exists a map $\TT \in \mathcal{T}$ satisfying $(\TT)^\sharp \eta = \nu$.
\end{enumerate}
\end{assumption}
\begin{theorem} \label{thm:abstract-pullback} Suppose Assumption~\ref{as:backward_transport} holds and consider the approximate measure 
\begin{equation} \label{eq:approximate_pullback}
     \widehat{\nu} \equiv \widehat{T}^\sharp\eta, \qquad \widehat{T} \coloneqq 
     \rev{\argmin_{S \in \widehat \mmT}} 
     \,\KL(\nu || S^\sharp \eta),
\end{equation}
where, as before, \rev{$\widehat \mmT$} denotes a parameterized class of maps. 
Then it holds that
$$\KL(\nu||\widehat{\nu}) \leq \rev{C\dist_{\|\cdot \|}(\widehat \mmT,T^\dagger)} \, ,$$
where $C$ is the same constant as in Assumption~\ref{as:backward_transport}(i). 
\end{theorem}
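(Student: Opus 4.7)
The plan is to mirror the proof of Theorem~\ref{thm:abstract-error} (the pushforward analogue) almost verbatim, swapping pushforwards for pullbacks and being careful about the direction of the KL divergence, which is not symmetric. Unlike the Wasserstein or MMD case, the asymmetry of $\KL$ means we have to line up the \emph{arguments} of $\KL$ correctly, but since both the optimization objective $\KL(\nu \| S^\sharp \eta)$ and the stability bound $\KL(F^\sharp \eta \| G^\sharp \eta)$ place $\nu$ (respectively $(T^\dagger)^\sharp \eta$) in the first slot, everything will align once we invoke feasibility.

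First I would use the fact that $\hT$ is a minimizer of \eqref{eq:approximate_pullback} to obtain, for every $T \in \hmT$,
\begin{equation*}
\KL(\nu \| \hnu) \;=\; \KL(\nu \| \hT^\sharp \eta) \;\leq\; \KL(\nu \| T^\sharp \eta).
\end{equation*}
Next, I would invoke Assumption~\ref{as:backward_transport}(ii), which provides a map $T^\dagger \in \mathcal{T}$ with $(T^\dagger)^\sharp \eta = \nu$, to rewrite the right-hand side as $\KL((T^\dagger)^\sharp \eta \| T^\sharp \eta)$. Then Assumption~\ref{as:backward_transport}(i) applied with $F = T^\dagger$ and $G = T$ (both in $\mathcal{T}$, provided $\hmT \subseteq \mathcal{T}$, which is implicit in the setup) yields
\begin{equation*}
\KL(\nu \| \hnu) \;\leq\; C \, \|T - T^\dagger\|_{\mathscr{B}}, \qquad \forall T \in \hmT.
\end{equation*}

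To conclude, I would introduce $T^* \coloneqq \argmin_{T \in \hmT} \|T - T^\dagger\|_{\mathscr{B}}$, whose existence follows from $\hmT$ being a closed subset of $\mathscr{B}$, and substitute $T = T^*$ to obtain $\KL(\nu \| \hnu) \leq C \, \dist_{\mathscr{B}}(\hmT, T^\dagger)$. Alternatively, and more robustly if closedness of $\hmT$ is not assumed, I would simply take the infimum of the right-hand side over $T \in \hmT$, which gives the same conclusion since $\dist_{\mathscr{B}}(\hmT, T^\dagger) = \inf_{T \in \hmT} \|T - T^\dagger\|_{\mathscr{B}}$.

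There is no real obstacle here: the only subtle point is the order of the arguments in $\KL$, and this was arranged on purpose via the choice of objective in \eqref{eq:approximate_pullback}. The proof is shorter and more straightforward than its pushforward counterpart precisely because both the minimization and the stability estimate concern the pullback of the \emph{same} reference $\eta$, so no change-of-variables manipulation is required at the abstract level; all the analytic work sits inside the verification of Assumption~\ref{as:backward_transport}(i), carried out in Theorems~\ref{thm:kl_pullback} and~\ref{thm:kl_pullback_KRmap}.
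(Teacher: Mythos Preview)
Your proposal is correct and takes essentially the same approach as the paper, which simply states that the proof is identical to that of Theorem~\ref{thm:abstract-error} with $D$ chosen to be the KL divergence from $\widehat\nu$ to $\nu$. Your careful remark about aligning the arguments of $\KL$ via feasibility is exactly the point, and the rest is the same minimizer-plus-stability argument.
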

\begin{proof}
The proof is identical to that of Theorem~\ref{thm:abstract-error} with $D$ chosen to be KL divergence from $\widehat\nu$ to $\nu$.
\end{proof}

\subsection{Proof of Theorem~\ref{thm:kl_pullback_KRmap}} \label{app:proof_kl_pullback_triangular}

Under Assumption~\ref{as:Gaussiantail_pullback_KR} on the target density, we follow the same approach as in the proof of Theorem~\ref{thm:kl_pullback} to bound the KL divergence by the sum of two terms
\begin{equation*}
    \KL(F^\sharp\eta||G^\sharp\eta) \leq c_F \underbrace{\int_{\R^d} \log \frac{p_\eta(F(y))}{p_\eta(G(y))} p_\eta(y) \dd y}_{\rm I} + c_F \underbrace{\int_{\R^d} \log \frac{|\det J_F|}{|\det J_G|} p_\eta(y) \dd y}_{\rm II}.
\end{equation*}
Using the Cauchy-Schwarz inequality, the term ${\rm I}$ is bounded as in the general (non-triangular) case, see \eqref{eq:pullback_bound_I}. This yields
\begin{align*}
{\rm I} &\leq \frac{1}{2} \|G - F\|_{L^2(\R^d;\eta)} \|G + F \|_{L^2(\R^d;\eta)} \\
&= \frac{1}{2} \|G + F \|_{L^2(\R^d;\eta)} \left(\sum_{i=1}^{d} \|G_i - F_i\|_{L^2(\R^i;\eta_{\leq i})} \right)\, ,
\end{align*}
where in the last equality we used the fact that $F$ and $G$ are triangular. 

We turn to bound the term ${\rm II}$. Recall that the determinant Jacobian of a triangular map is the product of the partial derivatives of each map component with respect to its diagonal variable, i.e., $\det J_F = \prod_{i=1}^{d} \frac{\partial F_i}{\partial x_i}$. Thus, the difference of the log-determinants simplifies to 
\begin{align*}
    II = \int_{\R^d} \log\frac{|\det J_F|}{|\det J_G|} p_{\eta}(y) \dd y &= \sum_{i=1}^{d} \int_{\R^d} (\log\partial_{x_i} F_i - \log\partial_{x_i} G_i) p_{\eta}(y) \,  \dd y \, ,
\end{align*}
Under Assumption~\ref{as:det_fg_pullback_KR} on the lower bound of the partial derivatives, the log is a Lipschitz function with constant $1/c$ and term $II$ is bounded by
\begin{align} \label{eq:pullback_triangular_bound_II}
    II \leq \frac{1}{c} \sum_{i=1}^{d} \int_{\R^d} |\partial_i F_i - \partial_i G_i| p_{\eta}(y) \dd y \leq \frac{1}{c} \sum_{i=1}^{d} \|F_i - G_i\|_{V_i(\R^i;\eta_{\leq i})}
\end{align}
Collecting the bounds in~\eqref{eq:pullback_bound_I} and~\eqref{eq:pullback_triangular_bound_II} we have $${\rm KL}(F^\sharp\eta||G^\sharp\eta) \leq C \sum_{i=1}^d \|F_i - G_i\|_{V_i(\R^i;\eta_{\leq i})},$$ with the constant $C \coloneqq c_F(\|G + F\|_{L^2_\eta}/2 + 1/c)$. We note that the upper bound is finite for map components $F_i,G_i$ that satisfy Assumption~\ref{as:pullback_KR}.

\subsection{Lemmas for Proposition~\ref{prop:KRconvergence}} \label{app:lemma_monotonemaps}

\begin{lemma}[Proposition 3 in~\cite{baptista2020representation}] \label{lem:prop3}
Let $r \colon \mathbb{R} \rightarrow \mathbb{R}_{>0}$ be a Lipschitz function, i.e., there exists a constant $L <\infty$ so that
$$|r(\xi) - r(\xi')| \leq L|\xi - \xi'|,$$
holds for any $\xi,\xi' \in \mathbb{R}$. Then $\mathcal{R}_i(f) \in V_{\eta_{\leq i}}$ for any $f \in V_{\eta_{\leq i}}$ where $\mathcal{R}_i$ is defined in~\eqref{eq:monotone_maps}. Furthermore, there exist a constant $C < \infty$ so that
$$\|\mathcal{R}_i(f_1) - \mathcal{R}_i(f_2)\|_{V_{\eta_{\leq i}}} \leq C\|f_1 - f_2\|_{V_{\eta_{\leq i}}},$$
holds for any $f_1,f_2 \in V_{\eta_{\leq i}}$.
\end{lemma}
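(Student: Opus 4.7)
The plan is to exploit the fundamental theorem of calculus to reduce the analysis of the nonlinear operator $\mathcal{R}_i$ to pointwise bounds on $r(\partial_i f)$ together with $L^2_{\eta_{\leq i}}$ control of $f$ and $\partial_i f$. Differentiating~\eqref{eq:monotone_maps} in $x_i$ yields the key identity
\begin{equation*}
\partial_i \mathcal{R}_i(f)(x_{1:i}) = r\bigl(\partial_i f(x_{1:i-1}, x_i)\bigr),
\end{equation*}
so the pointwise bound $|r(\xi)| \le |r(0)| + L|\xi|$ immediately gives $\|\partial_i \mathcal{R}_i(f)\|_{L^2_{\eta_{\leq i}}}^2 \le 2|r(0)|^2 + 2L^2 \|\partial_i f\|_{L^2_{\eta_{\leq i}}}^2$, which controls the derivative part of the $V_{\eta_{\leq i}}$-norm.

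To bound $\mathcal{R}_i(f)$ itself in $L^2_{\eta_{\leq i}}$, I would split the pointwise estimate $|\mathcal{R}_i(f)|^2 \le 2|f(x_{1:i-1},0)|^2 + 2\bigl|\int_0^{x_i} r(\partial_i f)\,dt\bigr|^2$ and treat the two pieces separately. The integral piece is handled by Cauchy--Schwarz, which gives $\bigl|\int_0^{x_i} r(\partial_i f)\,dt\bigr|^2 \le |x_i| \int_{\min(0,x_i)}^{\max(0,x_i)} |r(\partial_i f)|^2\,dt$, after which a Fubini argument exploiting the Gaussian identity $\int_t^{\infty} x_i\, p_{\eta_i}(x_i)\,dx_i = p_{\eta_i}(t)$ (and its analogue for $t<0$) reduces the spatial integral to a multiple of $\|r(\partial_i f)\|_{L^2_{\eta_{\leq i}}}^2$, which was already controlled above. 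The boundary piece is handled by the identity $f(x_{1:i-1}, 0) = f(x_{1:i-1}, x_i) - \int_0^{x_i} \partial_i f\,dt$ and the same Cauchy--Schwarz/Fubini step, yielding $\int |f(x_{1:i-1},0)|^2 \,d\eta_{\leq i} \lesssim \|f\|_{V_{\eta_{\leq i}}}^2$.

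For the Lipschitz estimate one writes
\begin{equation*}
\mathcal{R}_i(f_1) - \mathcal{R}_i(f_2) = (f_1 - f_2)(x_{1:i-1}, 0) + \int_0^{x_i} \bigl[r(\partial_i f_1) - r(\partial_i f_2)\bigr]\,dt,
\end{equation*}
so that $\partial_i(\mathcal{R}_i(f_1) - \mathcal{R}_i(f_2)) = r(\partial_i f_1) - r(\partial_i f_2)$. Using $|r(\partial_i f_1) - r(\partial_i f_2)| \le L|\partial_i f_1 - \partial_i f_2|$ and repeating the argument of the previous paragraph with $f$ replaced by $f_1 - f_2$ and $r(\partial_i f)$ replaced by $r(\partial_i f_1) - r(\partial_i f_2)$ produces the desired inequality with a constant $C$ that depends only on $L$, $|r(0)|$, and moments of $\eta_i$, but not on $f_1, f_2$.

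The main obstacle I anticipate is controlling the boundary value $f(x_{1:i-1}, 0)$: the norm on $V_{\eta_{\leq i}}$ is an integrated quantity on $\R^i$ rather than a trace on the hyperplane $\{x_i = 0\}$, so a direct trace inequality is unavailable. The fundamental-theorem-of-calculus substitution above circumvents this, but it only closes because the Gaussian density $p_{\eta_i}$ decays fast enough for the double integral $\int |x_i|\int_0^{x_i}|\cdot|^2\,dt\,d\eta_i$ to converge with constants independent of the particular $f, f_1, f_2$; for references with heavier tails the same strategy would require an additional moment hypothesis on the $i$-th marginal.
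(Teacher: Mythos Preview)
The paper does not prove this lemma; it is simply quoted as Proposition~3 of \cite{baptista2020representation} with no argument given. There is therefore no ``paper's proof'' to compare against, and your proposal must be judged on its own merits.

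Your argument is correct. The key ingredients---the identity $\partial_i\mathcal{R}_i(f)=r(\partial_i f)$, the linear-growth bound $|r(\xi)|\le |r(0)|+L|\xi|$, and the Cauchy--Schwarz/Fubini step using the Gaussian antiderivative identity $\int_t^\infty x_i\,p_{\eta_i}(x_i)\,dx_i=p_{\eta_i}(t)$---combine exactly as you describe to bound both $\|\mathcal{R}_i(f)\|_{L^2_{\eta_{\le i}}}$ and $\|\partial_i\mathcal{R}_i(f)\|_{L^2_{\eta_{\le i}}}$. Your trace-avoidance trick for $f(x_{1:i-1},0)$, writing it as $f(x_{1:i-1},x_i)-\int_0^{x_i}\partial_i f\,dt$ and integrating in $x_i$ against $p_{\eta_i}$, is the natural move and yields $\int |f(x_{1:i-1},0)|^2\,d\eta_{\le i}\le 2\|f\|_{V_{\eta_{\le i}}}^2$ via the same Fubini identity. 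The Lipschitz estimate then follows by the obvious substitution. One small remark: in the Lipschitz bound the constant depends only on $L$ (and implicitly on the Gaussian through the Fubini identity, which contributes a factor of~$1$), not on $|r(0)|$; the $|r(0)|$ enters only in the first assertion that $\mathcal{R}_i(f)\in V_{\eta_{\le i}}$.
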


\begin{lemma}[Proposition 5 in~\cite{baptista2020representation}] \label{lem:prop5} 
Let $r^{-1} \colon \mathbb{R}_{>0} \rightarrow \mathbb{R}$ be a Lipschitz function except at the origin, i.e., for any $c > 0$ there exists a constant $L_c < \infty$ so that 
$$|r^{-1}(\xi) - r^{-1}(\xi')| \leq L_c|\xi - \xi'|,$$
holds for any $\xi,\xi' \geq c$. Then for any $s_i \in V_{\eta_{\leq i}}$ such that $\essinf \partial_{x_i} s_i > 0$, we have $\mathcal{R}_i^{-1}(s) \in V_{\eta_{\leq i}}$ and $\essinf \partial_{x_i} \mathcal{R}_i^{-1}(s) > -\infty$. Furthermore for any $c > 0$, there exists a constant $C_c < \infty$ such that
$$\|\mathcal{R}_i^{-1}(s_1) - \mathcal{R}_i^{-1}(s_2)\|_{V_{\eta_{\leq i}}} \leq C_c\|s_1 - s_2\|_{V_{\eta_{\leq i}}}$$
holds for any $s_1,s_2 \in V_{\eta_{\leq i}}$ where $\essinf \partial_{x_i} s_1 \geq c$ and $\essinf \partial_{x_i} s_2 \geq c$.
\end{lemma}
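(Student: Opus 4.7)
The plan is to derive an explicit formula for the operator $\mathcal{R}_i^{-1}$ and then exploit the local Lipschitz property of $r^{-1}$ away from the origin to obtain both the mapping property and the Lipschitz estimate. Starting from the relation $s = \mathcal{R}_i(f)$ in \eqref{eq:monotone_maps}, differentiating with respect to $x_i$ yields $\partial_{x_i} s(x_{1:i}) = r(\partial_{x_i} f(x_{1:i}))$, while evaluating at $x_i = 0$ gives $s(x_{1:i-1}, 0) = f(x_{1:i-1}, 0)$. Solving these two relations for $f$ and integrating along $x_i$ produces the explicit inverse
\begin{equation*}
    \mathcal{R}_i^{-1}(s)(x_{1:i}) = s(x_{1:i-1}, 0) + \int_0^{x_i} r^{-1}\bigl(\partial_{x_i} s(x_{1:i-1}, t)\bigr) \, \dd t,
\end{equation*}
which is well defined under the hypothesis $\essinf \partial_{x_i} s \geq c > 0$, since the domain of $r^{-1}$ is $\R_{>0}$ and the argument always stays in $[c,\infty)$.

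Next, I would verify that $\mathcal{R}_i^{-1}(s) \in V_{\eta_{\leq i}}$ and that $\essinf \partial_{x_i} \mathcal{R}_i^{-1}(s) > -\infty$. The partial derivative reads $\partial_{x_i} \mathcal{R}_i^{-1}(s) = r^{-1}(\partial_{x_i} s)$, and monotonicity of $r^{-1}$ combined with the Lipschitz assumption on $[c,\infty)$ yields the pointwise bounds $r^{-1}(\partial_{x_i} s) \geq r^{-1}(c)$ and $|r^{-1}(\partial_{x_i} s)| \leq |r^{-1}(c)| + L_c \, |\partial_{x_i} s - c|$. Integrating with respect to $\eta_{\leq i}$ controls $\partial_{x_i} \mathcal{R}_i^{-1}(s)$ in $L^2_{\eta_{\leq i}}$ by $\|\partial_{x_i} s\|_{L^2_{\eta_{\leq i}}}$, and simultaneously gives the uniform lower bound $\essinf \partial_{x_i} \mathcal{R}_i^{-1}(s) \geq r^{-1}(c) > -\infty$. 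The $L^2_{\eta_{\leq i}}$ bound on $\mathcal{R}_i^{-1}(s)$ itself follows by Cauchy--Schwarz applied to the integral representation, together with a trace-type inequality controlling $\|s(\cdot, 0)\|_{L^2_{\eta_{\leq i-1}}}$ by $\|s\|_{V_{\eta_{\leq i}}}$.

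For the Lipschitz estimate, subtracting the two explicit formulas gives
\begin{equation*}
    \mathcal{R}_i^{-1}(s_1)(x_{1:i}) - \mathcal{R}_i^{-1}(s_2)(x_{1:i}) = (s_1 - s_2)(x_{1:i-1}, 0) + \int_0^{x_i} \bigl[r^{-1}(\partial_{x_i} s_1) - r^{-1}(\partial_{x_i} s_2)\bigr] \, \dd t.
\end{equation*}
Since the hypotheses guarantee $\partial_{x_i} s_1, \partial_{x_i} s_2 \geq c$ almost everywhere, the local Lipschitz assumption on $r^{-1}$ yields the pointwise estimate $|r^{-1}(\partial_{x_i} s_1) - r^{-1}(\partial_{x_i} s_2)| \leq L_c |\partial_{x_i} s_1 - \partial_{x_i} s_2|$. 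This immediately controls $\|\partial_{x_i}[\mathcal{R}_i^{-1}(s_1) - \mathcal{R}_i^{-1}(s_2)]\|_{L^2_{\eta_{\leq i}}}$ by $L_c \|\partial_{x_i}(s_1 - s_2)\|_{L^2_{\eta_{\leq i}}}$, while Cauchy--Schwarz on the integral, combined with the trace bound applied to $s_1 - s_2$, handles the $L^2$ portion of the $V_{\eta_{\leq i}}$-norm and delivers the claim with a constant $C_c$ depending only on $L_c$ and on the trace inequality.

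The main analytical obstacle is the trace term $s(x_{1:i-1}, 0)$: the space $V_{\eta_{\leq i}}$ controls only $s$ and $\partial_{x_i} s$ in $L^2_{\eta_{\leq i}}$, so the pointwise restriction to $x_i = 0$ must be justified by a weighted trace inequality. The standard trick is to integrate the identity $s(x_{1:i-1}, 0) = s(x_{1:i-1}, x_i) - \int_0^{x_i} \partial_t s(x_{1:i-1}, t) \, \dd t$ against a suitable test density in $x_i$, and then apply Cauchy--Schwarz and Fubini to reduce $\|s(\cdot, 0)\|_{L^2_{\eta_{\leq i-1}}}$ to a constant multiple of $\|s\|_{V_{\eta_{\leq i}}}$, with constant depending only on the one-dimensional Gaussian marginal $\eta_i$. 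Apart from this (mild) technicality, the proof simply matches the local Lipschitz regime of $r^{-1}$ on $[c,\infty)$ with the uniform lower bound $c$ on $\partial_{x_i} s_j$.
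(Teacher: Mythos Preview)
The paper does not actually prove this lemma: it is stated in Appendix~\ref{app:lemma_monotonemaps} purely as a citation (``Proposition~5 in~\cite{baptista2020representation}'') with no accompanying argument, so there is no in-paper proof to compare against. Your approach---deriving the explicit inverse formula $\mathcal{R}_i^{-1}(s)(x_{1:i}) = s(x_{1:i-1},0) + \int_0^{x_i} r^{-1}(\partial_{x_i} s)\,\dd t$, then using the local Lipschitz bound on $r^{-1}|_{[c,\infty)}$ together with a Gaussian-weighted trace inequality for the boundary term---is the natural one and is essentially what the cited reference does.

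One small caveat: your lower bound $\essinf r^{-1}(\partial_{x_i} s) \ge r^{-1}(c)$ relies on $r^{-1}$ being \emph{increasing}, not merely Lipschitz on $[c,\infty)$; the Lipschitz property alone would only give $r^{-1}(\xi) \ge r^{-1}(c) - L_c(\xi - c)$, which is useless if $\partial_{x_i} s$ is unbounded. Monotonicity of $r^{-1}$ is implicit in Definition~\ref{defn:monotone_triangular_maps} (a continuous bijection $r:\R\to\R_{>0}$ is strictly monotone, and the examples---softplus, shifted ELU---are increasing), so this is fine in context, but you should make the dependence on monotonicity explicit rather than bundling it with the Lipschitz hypothesis.
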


\begin{lemma}[Proposition 9 in~\cite{baptista2020representation}] \label{lem:prop9} Let $\nu$ be an absolutely continuous measure and $\eta$ be the standard Gaussian $\mathcal{N}(0,I_d)$ on $\R^d$. Let $T \colon \R^d \rightarrow \R^d$ be the Knothe-Rosenblatt rearrangement satisfying $T_\sharp \nu = \eta$. If the probability density function $p_\nu$ satisfies $cp_\eta(x) \leq p_\nu(x) \leq Cp_\eta(x)$ for all $x \in \R^d$ with some constants $0 < c \leq C < \infty$, then for all $x_{<i} \in \R^{i-1}$ and $i = 1,\dots,d$,  $T_i(x_{1:i-1},x_i) = \mathcal{O}(x_i)$ and $\partial_{x_i} T_k(x_{1:i-1},x_i) = \mathcal{O}(1)$ as $|x_i| \rightarrow \infty$.  Furthermore, we have $\essinf \partial_{x_i} T_{i}(x_{1:i-1},x_i) > 0$ for all $i=1,\dots,d$.
\end{lemma}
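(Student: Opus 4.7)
The plan is to exploit the explicit quantile-transform representation of the Knothe--Rosenblatt rearrangement and then reduce the tail analysis to classical Mills' ratio asymptotics for the Gaussian. Since $T_\sharp \nu = \eta = N(0,I_d)$ and both measures are absolutely continuous, the $i$-th component of $T$ depends only on $x_{1:i}$ and is the unique monotone map from the conditional $\nu_{i \mid <i}(\cdot \mid x_{1:i-1})$ to $\eta_i = N(0,1)$. Hence
\begin{equation*}
T_i(x_{1:i}) \;=\; \Phi^{-1}\!\bigl(F_{i\mid <i}(x_i \mid x_{1:i-1})\bigr),
\end{equation*}
where $\Phi$ is the standard normal CDF and $F_{i\mid <i}$ denotes the conditional CDF of $\nu$.

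Next, I would propagate the two-sided density bound $c p_\eta \le p_\nu \le C p_\eta$ through marginalization and conditioning. Integrating in $x_{i:d}$ gives $c p_{\eta_{1:i-1}} \le p_{\nu_{1:i-1}} \le C p_{\eta_{1:i-1}}$, so the conditional density satisfies
\begin{equation*}
\tfrac{c}{C}\,\phi(x_i) \;\le\; p_{i\mid <i}(x_i \mid x_{1:i-1}) \;\le\; \tfrac{C}{c}\,\phi(x_i),
\end{equation*}
uniformly in $x_{1:i-1}$, where $\phi$ is the standard normal density. Integrating this inequality from $x_i$ to $\infty$ converts it into a uniform comparison of complementary CDFs, $\tfrac{c}{C}(1-\Phi(x_i)) \le 1-F_{i\mid <i}(x_i\mid x_{1:i-1}) \le \tfrac{C}{c}(1-\Phi(x_i))$.

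The core step is then a Mills' ratio computation. Using $1-\Phi(z) = \phi(z)/z\,(1+O(z^{-2}))$ as $z \to +\infty$, the displayed two-sided estimate, combined with $T_i = \Phi^{-1}(F_{i\mid <i}(x_i\mid \cdot))$, yields $T_i(x_{1:i-1}, x_i) = x_i + O(1/x_i)$ uniformly in $x_{1:i-1}$ as $x_i \to +\infty$; a symmetric argument handles $x_i \to -\infty$. In particular $T_i = \mathcal{O}(x_i)$, and moreover $T_i^2 - x_i^2$ is uniformly bounded, so the ratio $\phi(T_i)/\phi(x_i)$ stays in a compact subinterval of $(0,\infty)$ independent of $x_{1:i-1}$. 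Differentiating the explicit representation gives
\begin{equation*}
\partial_{x_i} T_i(x_{1:i}) \;=\; \frac{p_{i\mid <i}(x_i\mid x_{1:i-1})}{\phi\bigl(T_i(x_{1:i})\bigr)},
\end{equation*}
whose numerator is $\Theta(\phi(x_i))$ and whose denominator is $\Theta(\phi(x_i))$ by the previous step; hence $\partial_{x_i} T_i = \mathcal{O}(1)$ as $|x_i|\to\infty$. The same expression shows $\partial_{x_i} T_i$ is strictly positive and continuous on any compact set, and bounded away from zero in the tails, which together give $\essinf \partial_{x_i} T_i > 0$.

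The main obstacle is the Mills' ratio step: showing not merely that $T_i/x_i \to 1$, but that $T_i^2 - x_i^2$ is \emph{uniformly} bounded in $x_{1:i-1}$. This is what transforms a crude asymptotic into the $\mathcal{O}(1)$ control on $\partial_{x_i} T_i$ needed for the last two conclusions, and it relies crucially on the fact that $c/C$ and $C/c$ are constants independent of $x_{1:i-1}$.
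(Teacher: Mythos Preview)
The paper does not supply its own proof of this lemma; it is quoted verbatim from \cite{baptista2020representation} (Proposition~9 there) as an auxiliary input to the proof of Proposition~\ref{prop:KRconvergence}. So there is no in-paper argument to compare against.

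Your plan is correct and is essentially the standard argument. The propagation of the two-sided density bound to the conditionals, $(c/C)\phi(x_i)\le p_{i\mid <i}(x_i\mid x_{1:i-1})\le (C/c)\phi(x_i)$, is right (marginalize the global bound to get two-sided control on both $p_{\nu_{1:i}}$ and $p_{\nu_{1:i-1}}$ and take the ratio). The Mills' ratio step indeed yields the uniform boundedness of $T_i^2-x_i^2$, which is, as you say, the crux; from it both $T_i=\mathcal O(x_i)$ and $\partial_{x_i}T_i=\Theta(1)$ in the tails follow from your derivative formula $\partial_{x_i}T_i=p_{i\mid <i}/\phi(T_i)$.

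One small imprecision in the $\essinf$ argument: you write that $\partial_{x_i}T_i$ is ``strictly positive and continuous on any compact set, and bounded away from zero in the tails,'' but the $\essinf$ is over all of $\R^i$, so you must also cover $|x_{1:i-1}|\to\infty$ with $x_i$ bounded. This is not a genuine obstacle: your conditional CDF bounds are already uniform in $x_{1:i-1}$, so for $x_i\in[-M,M]$ one has $T_i=\Phi^{-1}(F_{i\mid <i}(x_i\mid\cdot))$ confined to a fixed compact interval independent of $x_{1:i-1}$, hence $\phi(T_i)/\phi(x_i)$ is uniformly bounded there; combine with your tail estimate for $|x_i|>M$. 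Your closing remark about the constants $c/C$, $C/c$ being independent of $x_{1:i-1}$ is exactly the point---just make it explicit in place of the ``compact set'' phrasing.
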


\section*{Acknowledgments} RB and YM gratefully acknowledge support from the United States Department of Energy M2dt MMICC center under award DE-SC0023187.
BH is supported by the National Science Foundation grant DMS-208535.
RB and BH also gratefully acknowledge support from Air Force Office of Scientific Research under MURI award number FA9550-20-1-0358. AS was supported in part by
Simons Foundation Math + X Investigator Award \#376319 (Michael I.\ Weinstein) and the Binational Science Foundation grant \#2022254, and acknowledges the support of the AMS-Simons Travel Grant.

\bibliographystyle{siamplain}
\bibliography{Ref_MeasTransApprox}

\end{document}